\title{Lipschitz homotopy convergence of Alexandrov spaces}
\author{Ayato Mitsuishi}
\email[A.~Mitsuishi]{{mitsuishi@fukuoka-u.ac.jp}}
\address{{Department of Applied Mathematics, Fukuoka University, Jyonan-ku, Fukuoka-shi, Fukuoka 814-0180, JAPAN}}
\author{Takao Yamaguchi}
\email[T.~Yamaguchi]{takaoy@math.kyoto-u.ac.jp}
\address
{Department of Mathematics, Kyoto University, Kitashirakawa, Kyoto 606-8502, JAPAN}
\date{\today}
\theoremstyle{plain}
\newtheorem{thm}{Theorem}[section]
\newtheorem{lem}[thm]{Lemma}
\newtheorem{sublem}[thm]{Sublemma}
\newtheorem{cor}[thm]{Corollary}
\newtheorem{prop}[thm]{Proposition}
\newtheorem{defn}[thm]{Definition}
\newtheorem{rem}[thm]{Remark}
\newtheorem{claim}[thm]{Claim}
\newcommand{\diam}[0]{\mathrm{diam}\,}
\newcommand{\e}[0]{\epsilon}
\newcommand{\pa}[0]{\partial}
\newcommand{\ca}[0]{\mathcal}
\newcommand{\pmed}[0]{\par\medskip}
\newcommand{\psmall}[0]{\par\smallskip}
\newcommand{\n}[0]{\noindent}
\newcommand{\beq}[0]{\begin{equation}}
\newcommand{\eeq}[0]{\end{equation}}
\newcommand{\benum}[0]{\begin{enumerate}}
\newcommand{\eenum}[0]{\end{enumerate}}
\newcommand{\supp}[0]{\mathrm{supp}}
\begin{document}

\begin{abstract}
We introduce the notion of good coverings of metric spaces, and
prove that if a metric space admits a good covering, then it has the same locally Lipschitz homotopy type as the nerve complex of the covering. 
As an application, we obtain a Lipschitz homotopy stability result for a moduli space of compact Alexandrov spaces without collapsing. 
\end{abstract}
\maketitle


\section{Introduction}
For given $n$ and $D, v_0>0$, 
let $\mathcal A(n,D,v_0)$ denote the set of isometry classes of compact $n$-dimensional 
Alexandrov spaces with curvature $\ge -1$, diameter $\le D$ and volume $\ge v_0$.
Perelman's stability theorem has played important roles in the geometry of 
Alexandrov spaces with curvature bounded below. 
This theorem implies that the set of homeomorphism classes of spaces in $\mathcal A(n,D,v_0)$ is finite. 
Although he also claimed the Lipschitz version of the stability theorem is true, it has not yet been appeared.

We formulate our results for general metric spaces having good coverings.
We say that a locally finite open covering of a metric space is {\it good} if any non-empty
intersection in the covering has a Lipschitz strong deformation
retraction to a point 
(see Definition \ref{def:good cover} for the detail).

We use a symbol $\tau(\e_1, \e_2,\dots, \e_k)$ to denote a positive continuous function
satisfying 
$\lim_{\e_1, \e_2, \dots, \e_k \to 0} \tau(\e_1, \e_2, \dots, \e_k) = 0$. 

The main theorems of the present paper are stated as follows.

\begin{thm} \label{thm:lip-nerve}
Let $M$ be a $\sigma$-compact metric space having a 
good covering $\ca U$. Then $M$ has the same locally Lipschitz homotopy type
as the nerve of $\ca U$. 
\end{thm}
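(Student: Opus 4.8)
The plan is to establish a Lipschitz version of the classical nerve theorem. Write $\ca U=\{U_\alpha\}_{\alpha\in A}$, let $\ca N=\ca N(\ca U)$ be the nerve, and realize $|\ca N|$ in the usual way. I will construct a locally Lipschitz map $f\colon M\to|\ca N|$, a locally Lipschitz map $g\colon|\ca N|\to M$, and locally Lipschitz homotopies $g\circ f\simeq\mathrm{id}_M$ and $f\circ g\simeq\mathrm{id}_{|\ca N|}$; $\sigma$-compactness and the local finiteness of $\ca U$ are what keep all constructions inside finite subcomplexes locally and the Lipschitz constants locally bounded.

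First I would build $f$ from a Lipschitz partition of unity. Set $\varphi_\alpha(x)=\dist(x,M\setminus U_\alpha)$, which is $1$-Lipschitz with support in $U_\alpha$, and normalize by $\psi_\alpha=\varphi_\alpha\big/\sum_{\beta}\varphi_\beta$; by local finiteness the denominator is locally a finite sum bounded away from $0$, so $\{\psi_\alpha\}$ is a locally Lipschitz partition of unity subordinate to $\ca U$. Define $f(x)=\sum_\alpha\psi_\alpha(x)\,[\alpha]$ in barycentric coordinates. Since $\psi_\alpha(x)>0$ forces $x\in U_\alpha$, the support $\sigma(x):=\{\alpha:x\in U_\alpha\}$ is a genuine simplex of $\ca N$, so $f(x)\in|\sigma(x)|$, and since $f$ takes values locally in a finite subcomplex it is locally Lipschitz.

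Next I would build $g$ over the barycentric subdivision $\ca N'=\operatorname{sd}\ca N$ (so $|\ca N'|=|\ca N|$), whose simplices are chains $s=\langle\sigma_0\subsetneq\dots\subsetneq\sigma_k\rangle$ of simplices of $\ca N$. Using that $\ca U$ is good, fix for every simplex $\sigma$ of $\ca N$ a point $p_\sigma\in U_\sigma:=\bigcap_{\alpha\in\sigma}U_\alpha$ and a Lipschitz strong deformation retraction $H_\sigma\colon U_\sigma\times[0,1]\to U_\sigma$ of $U_\sigma$ onto $p_\sigma$. Put $g(\langle\sigma\rangle)=p_\sigma$ on vertices, and extend by induction on skeleta: given $g$ on $\pa s$ for $s=\langle\sigma_0\subsetneq\dots\subsetneq\sigma_k\rangle$, the chain of every proper face of $s$ begins with $\sigma_0$ or with $\sigma_1$, so (since $U_{\sigma_1}\subseteq U_{\sigma_0}$) one has $g(\pa s)\subseteq U_{\sigma_0}$; writing each point of $s$ uniquely as $(1-t)w+t\,b_s$ with $w\in\pa s$, $t\in[0,1]$ and $b_s$ the barycenter, set $g\big((1-t)w+t\,b_s\big)=H_{\sigma_0}(g(w),t)$. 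This is compatible with the boundary values, well defined at $t=1$, Lipschitz on $s$, and yields $g(s)\subseteq U_{\sigma_0}$; hence $g$ is locally Lipschitz.

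It then remains to produce the two homotopies, and this is the technical core. For $g\circ f\simeq\mathrm{id}_M$ the starting point is that if $f(x)$ lies in the open simplex $\langle\tau_0\subsetneq\dots\subsetneq\tau_m\rangle$ of $\ca N'$ — equivalently $\tau_0=\{\alpha:\psi_\alpha(x)=\max_\beta\psi_\beta(x)\}$ — then $g(f(x))\in U_{\tau_0}$ while $x\in U_{\sigma(x)}\subseteq U_{\tau_0}$, so $x$ and $g(f(x))$ always lie in a common member $U_{\tau_0(x)}$ of $\{U_\sigma\}$, and one wants to interpolate between them through this member. For $f\circ g\simeq\mathrm{id}_{|\ca N|}$ one cannot use contiguity, since $f\circ g$ may carry a simplex $|s|$ of $\ca N'$ off of $|s|$; instead one works by induction over the skeleta of $\ca N'$, using over each $|s|$ that $g(|s|)\subseteq U_{\sigma_0}$ together with the contraction $H_{\sigma_0}$ and the contractibility of the simplex $|s|$ itself, reconciling the simplex-by-simplex homotopies along shared faces. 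The main obstacle, in both homotopies, is exactly this reconciliation: the combinatorial data $\sigma(x)$, $\tau_0(x)$ (respectively the smallest vertex $\sigma_0$ of a simplex) jump discontinuously, and one must organize the piecewise homotopies, built from the various $H_\sigma$ and the linear homotopies of simplices, into a single globally defined map that is not merely continuous but locally Lipschitz with locally bounded constants. This is where the "good covering" hypothesis does its real work, and I expect essentially all the effort to go there; a cleaner but equivalent route is to interpose a Lipschitz homotopy colimit $X$ of the diagram $\sigma\mapsto U_\sigma$ and show that the canonical maps $M\leftarrow X\rightarrow|\ca N|$ are both Lipschitz homotopy equivalences, the first by the partition of unity and the second by the Lipschitz contractibility of the $U_\sigma$.
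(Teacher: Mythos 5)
Your construction of $f$ via the Lipschitz partition of unity is exactly the paper's map $\Theta$, and your inductive construction of $g$ on the barycentric subdivision is a reasonable candidate for a homotopy inverse. But the last paragraph, where you concede that constructing the homotopies $g\circ f\simeq\mathrm{id}_M$ and $f\circ g\simeq\mathrm{id}_{|\ca N|}$ is ``the technical core'' and that ``essentially all the effort'' will go into reconciling the discontinuously jumping combinatorial data, is not a proof sketch --- it is a statement of the problem. The difficulty you identify is real and is not automatically resolved by the good-covering hypothesis: for $g\circ f\simeq\mathrm{id}_M$ you would want, for each $x$, a Lipschitz path in $U_{\tau_0(x)}$ from $x$ to $g(f(x))$, assembled coherently in $x$; but $\tau_0(x)$ depends on which coordinate of $f(x)$ is maximal and changes on a codimension-one stratification, and the contractions $H_\sigma$ for different $\sigma$ are entirely unrelated to one another, so the naive piecewise definition need not be even continuous across the strata, let alone locally Lipschitz. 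Nothing in your outline says how to smooth or bridge across these jumps, and this is precisely where the argument would live or die.

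The paper sidesteps this by not attempting the homotopies directly between $M$ and $|\ca N_{\ca U}|$. It interposes the space $\ca D(\ca U)=\bigcup_{\sigma}\sigma\times U_\sigma\subset|\ca N_{\ca U}|\times M$ and the mapping cylinder $\ca M(p)=\bigcup_\sigma\sigma\times K(U_\sigma)$ of the projection $p:\ca D(\ca U)\to|\ca N_{\ca U}|$, and proves three Lipschitz strong deformation retraction statements: $\tau(M)\hookrightarrow\ca D(\ca U)$ (a convex interpolation in the simplex coordinate, Lemma~\ref{lem:section}), $|\ca N_{\ca U}|\hookrightarrow\ca M(p)$ (the cone collapse, Lemma~\ref{lem:N-M}), and the hard one, $\ca D(\ca U)\times 0\hookrightarrow\ca M(p)$ (Proposition~\ref{prop:D-M}). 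The last retraction is built by descending induction on skeleta: for each $k$-simplex $\sigma$ one retracts $\sigma\times K(U_\sigma)$ onto $(\sigma\times U_\sigma\times 0)\cup(\pa\sigma\times K(U_\sigma))$ using only the single contraction $\varphi$ of $U_\sigma$ together with a radial projection in the prism $\sigma\times[0,L]$, and the simplex-wise retractions glue automatically because the boundary faces are fixed. This localizes the reconciliation problem to a single $U_\sigma$ at a time, which is what the direct $g\circ f$ approach fails to do. Your final sentence --- interposing a ``Lipschitz homotopy colimit'' of $\sigma\mapsto U_\sigma$ --- is in fact a one-line gesture at exactly this construction, but as it stands the proposal defers rather than supplies the argument at the place where the theorem is actually proved.
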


We remark that in Theorem \ref{thm:lip-nerve} if $M$ is compact, 
it has the same Lipschitz homotopy type as the nerve of $\ca U$.

\begin{thm} \label{thm:lip-conv}
There exists a positive number $\e=\e_n(D,v_0)$ such that 
if $M,M'\in \mathcal A(n,D,v_0)$ have the Gromov-Hausdorff distance $d_{GH}(M,M') <\e$,
then $M$ has the same Lipschitz homotopy type as $M'$.
More precisely if $\theta:M\to M'$ is an $\e$-approximation, then 
there is a Lipschitz homotopy equivalence $f:M\to M'$ satisfying
that $|f(x),\theta(x)|<\tau(\e)$ for all $x\in M$.
\end{thm}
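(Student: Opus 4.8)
The plan is to reduce Theorem~\ref{thm:lip-conv} to Theorem~\ref{thm:lip-nerve} by constructing, for each $M \in \mathcal A(n,D,v_0)$, a good covering whose combinatorial structure (its nerve, together with the Lipschitz deformation data) is stable under small Gromov--Hausdorff perturbations. First I would fix a scale $\rho = \rho_n(D,v_0) > 0$, to be determined, and cover $M$ by the metric balls $\{B(p_i,\rho)\}$ over a maximal $\rho$-separated net $\{p_i\}$. The crucial geometric input is the local structure of Alexandrov spaces with a lower volume bound and no collapse: by the results of Perelman and the standard rescaling/precompactness arguments, there is $\rho_n(D,v_0)$ so small that every metric ball $B(p,10\rho)$ in any $M \in \mathcal A(n,D,v_0)$, and more generally every nonempty finite intersection of the balls $B(p_i,\rho)$, is ``almost conical'' --- it admits a Lipschitz strong deformation retraction to its center (respectively to a point), with Lipschitz constants bounded in terms of $n,D,v_0$ alone. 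This makes $\mathcal U_M := \{B(p_i,\rho)\}$ a good covering, so Theorem~\ref{thm:lip-nerve} (compact case) gives a Lipschitz homotopy equivalence $M \simeq |\mathcal N(\mathcal U_M)|$ with controlled constants.

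Next I would transport the covering across the Gromov--Hausdorff approximation. Given an $\e$-approximation $\theta : M \to M'$ with $\e \ll \rho$, the image $\{\theta(p_i)\} \subset M'$ is a $(\rho - \tau(\e))$-separated, $(\rho + \tau(\e))$-dense set, so after a harmless adjustment it is comparable to a net in $M'$ at scale $\rho$; the corresponding balls $\{B(\theta(p_i), \rho)\}$ form a good covering $\mathcal U_{M'}$ of $M'$. The key claim is that for $\e < \e_n(D,v_0)$ the nerves $\mathcal N(\mathcal U_M)$ and $\mathcal N(\mathcal U_{M'})$ coincide as abstract simplicial complexes: a finite intersection $\bigcap_{i \in S} B(p_i,\rho)$ is nonempty in $M$ iff the corresponding intersection is nonempty in $M'$. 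One direction is immediate from the $\e$-approximation property; for the reverse one needs that nonempty intersections are ``stably nonempty'' --- i.e. if $\bigcap_{i\in S} B(p_i,\rho) \ne \emptyset$ then in fact $\bigcap_{i\in S} B(p_i, \rho - c_n\e) \ne \emptyset$, which again follows from the conical/Lipschitz-contractible local structure (a point in the intersection can be pushed deeper into each ball). Having matched the nerves, I compose: $f := g' \circ \mathrm{id}_{|\mathcal N|} \circ g^{-1} : M \to M'$, where $g : M \to |\mathcal N(\mathcal U_M)|$ and $g' : M' \to |\mathcal N(\mathcal U_{M'})|$ are the Lipschitz homotopy equivalences of Theorem~\ref{thm:lip-nerve} and $|\mathcal N(\mathcal U_M)| = |\mathcal N(\mathcal U_{M'})|$.

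Finally I would verify the displacement estimate $|f(x),\theta(x)| < \tau(\e)$. This is where one must track the geometry inside the construction of $g$ and $g'$: the map $g$ sends a point $x$ into the star of the simplices $\{i : x \in B(p_i,\rho)\}$, hence $g^{-1}$ followed by $g'$ sends $x$ into the union of balls $B(\theta(p_i),\rho)$ with $i$ indexing simplices near $g(x)$; since all such $p_i$ lie within $O(\rho)$ of $x$, and $\theta$ distorts distances by at most $\e$, $f(x)$ lands within $O(\rho) + O(\e)$ of $\theta(x)$. To get the genuine $\tau(\e) \to 0$ bound (rather than merely $O(\rho)$) one iterates the argument with a sequence of scales $\rho_j \to 0$, or equivalently runs the whole scheme at scale $\rho = \rho(\e)$ chosen to go to $0$ slowly as $\e \to 0$ while still keeping the Alexandrov local-structure estimates valid (this is possible precisely because the ``almost conical'' conclusion holds at every sufficiently small scale with scale-independent constants, by rescaling).

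The main obstacle I anticipate is the uniform Lipschitz control in the first step: showing that the deformation retractions of the small balls and their intersections have Lipschitz constants depending only on $n,D,v_0$, not on the individual space $M$ or on the scale. This requires combining Perelman's local conical structure (via the fibration/stability theory and the gradient flow of distance functions) with a compactness argument over the moduli space $\mathcal A(n,D,v_0)$, and it is the step most likely to need the detailed Alexandrov-geometric technology rather than the soft nerve-theoretic machinery of Theorem~\ref{thm:lip-nerve}.
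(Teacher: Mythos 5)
Your high-level plan is the same as the paper's: produce good coverings of $M$ and $M'$ with isomorphic nerves, pass through the common nerve via Theorem~\ref{thm:lip-nerve}, and then estimate the displacement of the composite. But there is a genuine gap in the first step. You propose to take the covering to be metric balls $B(p_i,\rho)$ over a maximal net, and to argue that (by Perelman's local structure theory, rescaling, and compactness of the moduli space) every nonempty finite intersection of such balls admits a Lipschitz strong deformation retraction to a point. This is not something one gets cheaply: metric balls in Alexandrov spaces are not convex, an intersection of two balls can be geometrically complicated near singular points, and establishing that all finite intersections are Lipschitz contractible (even forgetting uniformity of constants) is precisely the hard content of the \emph{good covering theorem} of \cite{MY:good} (stated here as Theorem~\ref{thm:good}), whose covering elements are carefully constructed convex sets with cone structure, not balls. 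The paper never attempts to show balls form a good covering; it imports both the existence of good coverings and, crucially, their \emph{stability} under Gromov--Hausdorff convergence (Theorem~\ref{thm:approximation}) from \cite{MY:good}. You identify the ``uniform Lipschitz control'' as the main obstacle, but the obstacle is in fact one level deeper: with balls it is not clear the intersections are Lipschitz contractible at all.

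There is also a difference in how the quantitative bound $|f(x),\theta(x)|<\tau(\e)$ is extracted. You propose to run the construction at a scale $\rho = \rho(\e)\to 0$, which would require uniform-in-scale Lipschitz bounds (again, not established). The paper instead argues by contradiction: it assumes the bound fails with some $c>0$ along a sequence, chooses a \emph{fixed} scale $\mu \ll c$, and applies the stability result Theorem~\ref{thm:approximation} to a limit space $M$ of the bad sequences. The displacement estimate then comes down to knowing exactly where the homotopy inverse $\zeta$ of the nerve map $\Theta$ sends a given point of the nerve; this is the content of Corollary~\ref{cor:homo-inv2}, which tracks the image through the construction of the deformation retractions $\ca M(p) \to \ca D(\ca U)\times 0 \to \tau(M)$. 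You gesture at this (``$g$ sends $x$ into the star of the simplices containing $x$, hence $g'^{-1}\circ g$ lands near $\theta(x)$''), but without a lemma of the form of Corollary~\ref{cor:homo-inv2} the estimate does not close, since the nerve equivalences of Theorem~\ref{thm:lip-nerve} are abstract homotopy equivalences with no a priori displacement control. In short: the reduction-to-nerves strategy is correct and is the one the paper uses, but you would need to (i) replace the ad hoc ball covering by the good covering machinery of \cite{MY:good}, including its GH-stability, and (ii) supply the tracking lemma Corollary~\ref{cor:homo-inv2} to obtain the $\tau(\e)$ estimate.
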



As a direct consequence of Theorem \ref{thm:lip-conv}, we have 

\begin{cor} \label{thm:lip-homo-type}
The set of Lipschitz homotopy types of Alexandrov spaces in 
$\mathcal A(n,D,v_0)$ is finite.
\end{cor}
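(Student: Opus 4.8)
The plan is to combine Theorem~\ref{thm:lip-conv} with Gromov's precompactness theorem. First I would recall that the collection of compact $n$-dimensional Alexandrov spaces with curvature $\ge -1$ and diameter $\le D$ is precompact in the Gromov--Hausdorff topology; in particular $\mathcal A(n,D,v_0)$ is totally bounded with respect to $d_{GH}$. Note that the volume lower bound $v_0$ plays no role in this precompactness: it enters the argument only through Theorem~\ref{thm:lip-conv}, where it guarantees the non-collapsing needed to choose $\e$ uniformly.

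Next, let $\e = \e_n(D,v_0) > 0$ be the constant furnished by Theorem~\ref{thm:lip-conv}. By total boundedness I can pick finitely many spaces $M_1,\dots,M_N \in \mathcal A(n,D,v_0)$ forming an $\e$-net, so that every $M \in \mathcal A(n,D,v_0)$ satisfies $d_{GH}(M,M_i) < \e$ for some $i$. Theorem~\ref{thm:lip-conv} then produces a Lipschitz homotopy equivalence $f\colon M \to M_i$.

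Finally, I would observe that ``having the same Lipschitz homotopy type'' is an equivalence relation on metric spaces: reflexivity is witnessed by the identity map, symmetry is built into the definition of a Lipschitz homotopy equivalence, and transitivity holds because a composite of Lipschitz maps is Lipschitz and a suitably reparametrized concatenation of two Lipschitz homotopies is again a Lipschitz homotopy. Consequently the Lipschitz homotopy type of each $M \in \mathcal A(n,D,v_0)$ coincides with that of one of the finitely many $M_i$, so $\mathcal A(n,D,v_0)$ realizes at most $N$ distinct Lipschitz homotopy types.

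As for the main obstacle, there genuinely is none: the corollary is an immediate packing argument once Theorem~\ref{thm:lip-conv} is available. The only points calling for a word of care are that the $\e$-net may be taken to consist of members of the moduli space itself (rather than of its Gromov--Hausdorff closure), which is automatic since a totally bounded set admits a finite $\e$-net by its own points, and the routine verification that Lipschitz homotopy equivalence is transitive.
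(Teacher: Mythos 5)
Your argument is correct and is exactly the packing argument the paper has in mind: the paper states the corollary as a direct consequence of Theorem~\ref{thm:lip-conv} and of the (pre)compactness of $\mathcal A(n,D,v_0)$ in the Gromov--Hausdorff topology, which is precisely the combination you use.
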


This provides a weaker version of ``the finiteness of bi-Lipschitz homeomorphism classes'' mentioned above. 

In Corollary	 \ref{thm:lip-homo-type} we prove that every $M$ and $M'$ in $\mathcal A(n,D,v_0)$ with 
small Gromov-Hausdorff distance have the same Lipschitz homotopy type through isomorphic 
nerves of some good coverings on them. 
However it was shown in \cite{BGP} and \cite{Ya:conv} that there is an almost isometric map
from a closed domain of an almost regular part of $M$ to a closed domain of an almost regular part of $M'$.
John Lott asked us if one can extend such an almost isometric map to a Lipschitz homotopy equivalence $M\to M'$. 
The answer is yes:

\begin{thm} \label{thm:gluing}
Let $\delta$ be a sufficiently small positive number with respect to $n$.
For given compact $n$-dimensional Alexandrov space $M$ with curvature $\ge -1$ and 
a closed domain $D$ in the $\delta$-regular part of $M$, there exists 
an $\e=\e_{M,D}>0$ satisfying the following: Let $M'$ be a compact $n$-dimensional Alexandrov space
with curvature $\ge -1$ and with $d_{GH}(M,M')<\e$, and let $\theta:M\to M'$ be an $\e$-approximation.
Then there is a Lipschitz homotopy equivalence $f:M\to M'$ such that
\begin{enumerate}
\item the restriction of $f$ to $D$ is $\tau(\e)$-almost isometric;
\item $|f(x),\theta(x)|<\tau(\e)$ for all $x\in M$.
\end{enumerate}
\end{thm}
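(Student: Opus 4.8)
The plan is to combine the Lipschitz homotopy stability of Theorem \ref{thm:lip-conv} with a controlled modification near the regular domain $D$, so that the resulting map is additionally almost isometric on $D$. First I would recall from \cite{BGP}, \cite{Ya:conv} the local structure of the $\delta$-regular part: if $\delta$ is small relative to $n$, then a neighborhood $U$ of the closed domain $D$ in $M$ admits a bi-Lipschitz, in fact $\tau(\delta)$-almost isometric, chart to an open subset of $\mathbb R^n$, and the same holds for $M'$ once $\e$ is small, with the regular-part charts on $M$ and $M'$ matched through the approximation $\theta$ up to error $\tau(\e)$. Concretely, by the arguments establishing the "almost isometric map from a closed domain of an almost regular part of $M$ to a closed domain of an almost regular part of $M'$," there is a neighborhood $U\supset D$, a closed domain $D'\subset U'\subset M'$, and a $\tau(\e)$-almost isometric homeomorphism $g:U\to U'$ with $|g(x),\theta(x)|<\tau(\e)$ on $U$; here one uses that $D$ lies in the interior of the $\delta$-regular part, so a definite-size collar around $D$ is still regular.

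Next I would build a good covering of $M$ adapted to $D$: take $\ca U = \ca V \cup \ca W$, where $\ca V$ is a finite family of small metric balls covering a neighborhood of $D$ chosen inside the almost-isometric chart (so each ball and each of its nonempty intersections is convex in the Euclidean chart, hence has a Lipschitz strong deformation retraction to a point), and $\ca W$ is a good covering of $M \setminus D$ of the type produced in the proof of Theorem \ref{thm:lip-conv}, arranged so the two families overlap only in the regular collar. The point is that, with $\e$ small, the pushforward balls $g(V)$ for $V\in\ca V$ together with the corresponding good covering $\ca W'$ of $M'\setminus D'$ form a good covering $\ca U'$ of $M'$ whose nerve is \emph{isomorphic} to that of $\ca U$, via the correspondence induced by $\theta$ on indices; this is exactly the mechanism used for Corollary \ref{thm:lip-homo-type}. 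Applying Theorem \ref{thm:lip-nerve} to both $M$ and $M'$ gives Lipschitz homotopy equivalences $M \simeq |N(\ca U)| = |N(\ca U')| \simeq M'$, and tracing through the construction the composite $f:M\to M'$ can be taken to satisfy $|f(x),\theta(x)|<\tau(\e)$ everywhere, recovering property (2).

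Finally, for property (1) I would arrange that on the subcomplex of $N(\ca U)$ spanned by the index set of $\ca V$, the partition-of-unity map realizing the nerve equivalence is, on $D$, precisely the affine barycentric map into the Euclidean chart, whose inverse composed with $g$ (itself $\tau(\e)$-almost isometric) yields $f|_D = g|_D$ up to $\tau(\e)$ in the almost-isometric sense. This requires choosing the defining functions of the good covering $\ca V$ to be the barycentric coordinates of the Euclidean chart and checking that the canonical map $M\to|N(\ca U)|$ and its homotopy inverse, when restricted to $D$, reduce to the identity chart up to $\tau(\e)$; I would do this by a partition-of-unity/center-of-mass argument in the chart, where all estimates are Euclidean and hence quantitatively controlled by the $\tau(\delta)$-almost isometry constant and the $\tau(\e)$ matching of charts. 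The main obstacle is this last compatibility: the homotopy inverse supplied abstractly by Theorem \ref{thm:lip-nerve} need not a priori be almost isometric on $D$, so one must either strengthen that theorem's construction to be natural with respect to a prescribed chart on a good subfamily, or interpolate — using a Lipschitz homotopy supported in the regular collar — between the abstract homotopy inverse and the genuine chart map $g|_D$, keeping the interpolation $\tau(\e)$-small; controlling the Lipschitz constant of this interpolation uniformly is the delicate point, but it is feasible because everything takes place inside a fixed bi-Lipschitz Euclidean chart.
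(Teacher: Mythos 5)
The first part of your plan --- constructing a good covering of $M$ adapted to $D$ out of Euclidean balls and arranging that the nerve map $\Theta:M\to |\ca N_{\ca U}|$ and its homotopy inverse reduce on $D$ to the chart --- will not go through. The map $\Theta$ is a barycentric map into a simplicial complex whose geometry has nothing to do with that of $M$, and the homotopy inverse $\zeta$ of Corollary~\ref{cor:homo-inv2} is only controlled up to landing in one of the covering members containing the relevant open simplex; neither can be arranged to be $\tau(\e)$-almost isometric on a definite-size region, no matter how the partitions of unity are chosen. You sense this, which is why you add the interpolation fallback at the end.

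That fallback is the right idea and is in fact what the paper does: take the abstract Lipschitz homotopy equivalence $f$ from Theorem~\ref{thm:lip-conv} and the $\tau(\e)$-almost isometry $g$ on a collar $D_1\supset D$ in the $\delta$-regular part, and glue them across the collar via a center-of-mass construction --- partitions of unity by cut-off functions, transported through strainer-coordinate charts to convexify the averaging. But your proposal stops at ``interpolate, keeping it $\tau(\e)$-small,'' and the genuinely hard step you do not address is certifying that the glued map $h$ is still a Lipschitz homotopy equivalence. In the paper this requires performing the analogous gluing on $M'$ to obtain $h'$, and then \emph{constructing a new Lipschitz homotopy between $1_M$ and $h'\circ h$}. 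On the regular collar this homotopy is produced by gradient-like flow of the distance to the diagonal $\Delta\subset M\times M$, using the regularity of $d_\Delta$, smoothing of the ambient space, and the existence of Lipschitz gradient-like flows from \cite{KMS}, \cite{MY:3alex} (the method of Grove--Petersen \cite{GP}); this collar homotopy is then itself glued with the homotopy coming from $f$ by the same center-of-mass mechanism. That construction is the core of the argument and is entirely absent from your proposal --- without it you have a candidate map, not a proof that it is a Lipschitz homotopy equivalence.
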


Theorem \ref{thm:lip-nerve} has an application to the set of homotopies of mapping between two metric spaces.
Let $[X,Y]$ denote the set of all homotopy classes of continuous maps from $X$ to $Y$, 
and $[X,Y]_{\mathrm{loc}\text{-}\mathrm{Lip}}$ the set of all locally Lipschitz homotopy classes of locally Lipschitz maps from $X$ to $Y$.
In Corollary 1.3 of \cite{MY:LLC}, we proved that if $K$ is a simplicial complex and $Y$ is a locally Lipschitz contractible metric space,
then the natural map $[K,Y]_{\mathrm{loc}\text{-}\mathrm{Lip}} \to [K, Y]$ is bijective. 

Using Theorem \ref{thm:lip-nerve} and Corollary 1.3 of \cite{MY:LLC}, we obtain the following. 


\begin{cor} \label{cor:Lip-conti}
Let $X$ be a $\sigma$-compact metric space admitting a good covering, 
and $Y$ a locally Lipschitz contractible metric space. 
Then, the natural map $[X,Y]_{\mathrm{loc}\text{-}\mathrm{Lip}} \to [X,Y]$ is bijective.

In particular, every continuous map from $X$ to $Y$ is homotopic to a locally Lipschitz one.
\end{cor}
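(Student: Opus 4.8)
The plan is to transport the corresponding statement for simplicial complexes, namely Corollary 1.3 of \cite{MY:LLC}, along the locally Lipschitz homotopy equivalence furnished by Theorem \ref{thm:lip-nerve}. First I would invoke Theorem \ref{thm:lip-nerve}: fix a good covering $\ca U$ of $X$, let $K$ be its nerve, and fix locally Lipschitz maps $\varphi\colon X\to K$ and $\psi\colon K\to X$ together with locally Lipschitz homotopies $\psi\circ\varphi\simeq\mathrm{id}_X$ and $\varphi\circ\psi\simeq\mathrm{id}_K$. Since $\ca U$ is locally finite and $X$ is $\sigma$-compact, $K$ is a (countable, locally finite) simplicial complex, so Corollary 1.3 of \cite{MY:LLC} applies to $K$ and the locally Lipschitz contractible space $Y$.

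Next I would set up the square
\[
\begin{CD}
[K,Y]_{\mathrm{loc}\text{-}\mathrm{Lip}} @>>> [K,Y] \\
@V{\varphi^{*}}VV @VV{\varphi^{*}}V \\
[X,Y]_{\mathrm{loc}\text{-}\mathrm{Lip}} @>>> [X,Y]
\end{CD}
\]
whose horizontal arrows are the natural forgetful maps and whose vertical arrows are precomposition with $\varphi$; commutativity is immediate from naturality of the forgetful map. The top arrow is bijective by Corollary 1.3 of \cite{MY:LLC}. The right-hand $\varphi^{*}$ is bijective with inverse $\psi^{*}$ because $\varphi$ is an ordinary homotopy equivalence. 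For the left-hand $\varphi^{*}$ one uses that a composition of locally Lipschitz maps is again locally Lipschitz and that composing a locally Lipschitz homotopy with a locally Lipschitz map yields a locally Lipschitz homotopy; combined with the locally Lipschitz homotopies $\psi\circ\varphi\simeq\mathrm{id}_X$ and $\varphi\circ\psi\simeq\mathrm{id}_K$, this gives $\psi^{*}\circ\varphi^{*}=\mathrm{id}$ and $\varphi^{*}\circ\psi^{*}=\mathrm{id}$ on the locally Lipschitz homotopy sets, so the left-hand $\varphi^{*}$ is bijective as well.

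Finally, with three of the four maps in the square being bijections, the bottom arrow $[X,Y]_{\mathrm{loc}\text{-}\mathrm{Lip}}\to[X,Y]$ is forced to be a bijection, which is exactly the assertion; the ``in particular'' part is just its surjectivity. I expect no genuine obstacle here, only routine care in two places: checking that every composition and homotopy in sight stays locally Lipschitz on the non-compact space $X$, and, should the metric on the nerve $K$ used in Theorem \ref{thm:lip-nerve} not literally coincide with the one used in \cite{MY:LLC}, verifying that the two agree up to a bi-Lipschitz equivalence on each simplex so that Corollary 1.3 of \cite{MY:LLC} applies verbatim.
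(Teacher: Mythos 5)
Your proposal is correct and takes essentially the same route as the paper: fix a good cover, pass to the nerve $K$, apply Corollary 1.3 of \cite{MY:LLC} to $K$, and transport the bijection across the (locally Lipschitz) homotopy equivalence $X\simeq K$ from Theorem \ref{thm:lip-nerve} using the contravariance of precomposition. The paper phrases this by noting that $f^\ast$ and $g^\ast$ are mutually inverse on both $[\,\cdot\,,Y]_{\mathrm{loc}\text{-}\mathrm{Lip}}$ and $[\,\cdot\,,Y]$ rather than explicitly drawing your commutative square, but the argument is the same.
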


As an immediate consequence of Corollary \ref{cor:Lip-conti}, we have 
the following for instance.

\begin{cor} \label{cor:Lip-alex}
Let $X$ be a finite-dimensional compact Alexandrov space with curvature bounded below,
and $Y$ a locally Lipschitz contractible metric space. 
Then every continuous map from $X$ to $Y$ is homotopic to a Lipschitz map.
\end{cor}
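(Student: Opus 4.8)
The plan is to deduce Corollary \ref{cor:Lip-alex} by combining Corollary \ref{cor:Lip-conti} with two structural facts about Alexandrov spaces. First I would observe that a finite-dimensional compact Alexandrov space $X$ with curvature bounded below is automatically $\sigma$-compact (indeed compact), so the only thing to verify in order to invoke Corollary \ref{cor:Lip-conti} is that $X$ admits a good covering in the sense of Definition \ref{def:good cover}. Granting that, Corollary \ref{cor:Lip-conti} tells us that the natural map $[X,Y]_{\mathrm{loc}\text{-}\mathrm{Lip}} \to [X,Y]$ is bijective; in particular it is surjective, so every continuous map $X\to Y$ is homotopic to a locally Lipschitz one. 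Since $X$ is compact, a locally Lipschitz map on $X$ is globally Lipschitz (cover $X$ by finitely many open sets on each of which the map is Lipschitz, then use compactness and a Lebesgue-number argument to patch the local Lipschitz constants into a global one). This last step is routine and I would only sketch it.

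The real content is therefore the construction of a good covering of $X$. The approach I would take is to build it from the canonical local structure of Alexandrov spaces: around each point $p\in X$ there is a small metric ball $B(p,r_p)$ which, by the work on the local geometry of Alexandrov spaces (for instance via strained or $(n,\delta)$-strained neighborhoods, or via the cone/join structure of small balls), is bi-Lipschitz to an open subset of a metric cone and in particular admits a Lipschitz strong deformation retraction to its center $p$. More generally one wants that all finite non-empty intersections of the chosen sets enjoy such a Lipschitz contraction. To arrange this one typically takes the $B(p,r_p)$ to be geodesically star-shaped about $p$ with the gradient-flow or geodesic-contraction maps depending Lipschitz-continuously on the point, so that intersections inherit a Lipschitz strong deformation retraction; then one extracts a finite (hence locally finite) subcovering by compactness of $X$. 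I expect this to be the main obstacle: verifying that the intersections — not just the individual balls — carry Lipschitz strong deformation retractions to a point requires care, and is exactly where the Alexandrov geometry (semiconcavity of distance functions, gradient flows, the structure of spaces of directions) enters.

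Once the good covering is in hand, the rest is formal: Corollary \ref{cor:Lip-conti} applies verbatim with $X$ playing the role of the $\sigma$-compact space admitting a good covering and $Y$ the locally Lipschitz contractible target, yielding bijectivity of $[X,Y]_{\mathrm{loc}\text{-}\mathrm{Lip}} \to [X,Y]$, and compactness upgrades ``locally Lipschitz'' to ``Lipschitz''. I would present the argument in this order: (1) reduce to producing a good covering; (2) construct the good covering using the local conical/strained structure and gradient-flow contractions, checking the intersection condition; (3) apply Corollary \ref{cor:Lip-conti}; (4) upgrade local Lipschitz to Lipschitz via compactness. Steps (1), (3), (4) are short; step (2) is where I would concentrate the effort, and it is plausibly already available in the literature on Alexandrov spaces, in which case this corollary is genuinely ``immediate'' as the authors assert.
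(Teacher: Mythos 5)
Your proposal follows essentially the same route as the paper: the existence of a good covering for a finite-dimensional Alexandrov space is exactly Theorem \ref{thm:good} (from \cite{MY:good}), already stated and used in the paper, so your step (2) is not something you need to reconstruct, and the rest (apply Corollary \ref{cor:Lip-conti}, then upgrade locally Lipschitz to Lipschitz via compactness and a Lebesgue-number argument) is precisely what makes the corollary ``immediate'' as the authors indicate and as noted in the remark following Corollary \ref{cor:Lip-conti}.
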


\vspace{1em}
\noindent
{\bf Organization}.
The rest of the present paper consists of Sections \ref{sec:prelim}--\ref{sec:gluing}. 
In Section \ref{sec:prelim}, we recall the notions of Lipschitz homotopies, Alexandrov spaces and good coverings needed in this paper.
Sections \ref{sec:Lip-homotopy} and \ref{sec:non-cpt} are devoted to prove Theorem \ref{thm:lip-nerve}, where 
we employ a basic strategy in the proof of Theorem 9.4.15 of \cite{Sha}. 
Since the argument in \cite{Sha} is only topological, we need to proceed in the category of (locally) Lipschitz maps.
%
%
In Section \ref{sec:Lip-homotopy}, we consider the case when metric spaces are compact, and deal with 
the non-compact case in Section \ref{sec:non-cpt}. Using Theorem \ref{thm:lip-nerve}
and a stability result of nerves of good coverings in \cite{MY:good}, 
we prove Theorem \ref{thm:lip-conv} and Corollaries \ref{thm:lip-homo-type} and \ref{cor:Lip-conti}
in Section \ref{sec:appl}. 
In Section \ref{sec:gluing}, we prove Theorem \ref{thm:gluing} by developing a gluing method in 
\cite{BGP}.

\vspace{1em}\noindent
{\bf Acknowledgement}
The authors would like to thank John Lott for his valuable comment concerning Theorem\ref{thm:gluing}. 
This material is based upon work supported by the National Science Foundation under Grant No. DMS-1440140 
while the second named author was in residence at the Mathematical Sciences Research Institute in Berkeley, 
California, during the Spring 2016 semester.
This work was also supported by JSPS KAKENHI Grant Numbers 26287010, 15H05739, 15K17529.


\section{Preliminaries} \label{sec:prelim}
In this paper, the distance between two points $x, y$ in a metric space is denoted by $|xy|$ or $|x,y|$. 
The open metric ball around $x$ of radius $r$ is denoted by $B(x,r)$.
To prove the main result, we prepare several terminologies. 

\subsection{Homotopies in the category of (locally) Lipschitz maps}
\mbox{}
\pmed

Let $X$ and $Y$ be metric spaces.

\begin{defn} \upshape \label{def:LSDR}
We say that a subset $A$ of $X$ is a {\it locally Lipschitz strong deformation retract} of $X$ if there is a Lipschitz map $F : X \times [0,1] \to X$ such that $F(x,0)=x$, $F(x,1) \in A$ and $F(a,t)=a$ for any $x \in X$, $a \in A$ and $t \in [0,1]$. 
Then, the map $F$ is called a locally Lipschitz strong deformation retraction of $X$ to $A$.
\end{defn}


\begin{defn} \upshape \label{def:Lip-hom}
Two maps $h_0, h_1:X\to Y$ are said to be {\it locally Lipschitz homotopic}
if there exists a locally Lipschitz map $h : X \times [0,1] \to Y$
such that $h_i = h(\cdot, i)$ \, $(i=0,1)$.

We say that $X$ and $Y$ are {\it locally Lipschitz homotopy equivalent} if there are locally Lipschitz maps $f : X \to Y$ and $g : Y \to X$ such that 
$g \circ f$ and $f \circ g$ are locally Lipschitz homotopic to $1_X$ and $1_Y$
respectively.
In this case, $f$ and $g$ are called {\it locally Lipschitz homotopy equivalences}.
\end{defn}

In the above definition, if a locally Lipschitz homotopy can be chosen to be a Lipschitz one, then it is called a Lipschitz homotopy. 
For other notions appeared in Definitions \ref{def:LSDR} and \ref{def:Lip-hom}, we use similar terminologies.
From definition, if $Y$ is a (locally) Lipschitz strong deformation retract of $X$, then $X$ and $Y$ are (locally) Lipschitz homotopy equivalent. 

Let $X$ be an unbounded metric space, and $f:X\to X$ a Lipschitz map
whose image is a bounded subset. Then it follows from definition that
$f$ is not Lipschitz homotopic to $1_X$. 
In particular if a metric space $X$ is Lipschitz homotopy equivalent to a bounded metric space, then $X$ is also bounded.

\subsection{The Gromov-Hausdorff distance}
A map $f : X \to Y$ between metric spaces is called an $\epsilon$-{\it approximation} if it satisfies 
\begin{itemize}
\item $||f(x),f(y)| - |x,y|| < \epsilon$ for all $x,y \in X$; 
\item for any $y \in Y$, there is an $x \in X$ such that $|f(x),y|< \epsilon$.
\end{itemize}

The {\it Gromov-Hausdorff distance $d_{\mathrm{GH}}(X,Y)$} 
between $X$ and $Y$ is defined as 
\[
d_{\mathrm{GH}}(X,Y) := \inf \left\{\epsilon > 0 \,\left|
\begin{aligned}
&\text{there exist $\epsilon$-approximations } \\
&X \to Y \text{and } Y \to X
\end{aligned} \right. \right\}.
\]
A bijective map $f : X \to Y$ is called an $\epsilon$-{\it almost isometry} if both $f$ and $f^{-1}$ are Lipschitz with Lipschitz constants at most $1+\epsilon$.

\subsection{Alexandrov spaces and good coverings}
We briefly recall the definition of Alexandrov spaces and their properties. 
For details, we refer to \cite{BGP}.
A complete metric space $X$ is called an {\it Alexandrov space} if it is a length space and for any $p \in X$, there exist $\kappa\in\mathbb R$ and a neighborhood $U$ of $p$ such that for any $x,y,z \in U \setminus \{p\}$, we have 
\[
\tilde \angle_\kappa xpy + \tilde \angle_\kappa ypz + \tilde \angle_\kappa zpx \le 2 \pi,
\]
where $\tilde \angle_\kappa xpy$ is defined as the angle of a comparison triangle
$\tilde\Delta xpy=\Delta \tilde x\tilde p\tilde y$ at $\tilde p$ in the complete simply connected surface $M_{\kappa}$ of constant curvature $\kappa$.
It is known that the Hausdorff dimension of $X$ coincides with its Lebesgue covering dimension (\cite{BGP}, \cite{Plaut}), 
which is called the dimension of $X$.
When $\kappa$ is chosen to be independent of the choice of points $p \in X$,
we say that $X$ is of curvature $\ge \kappa$. 
%
%
%
When $X$ is of dimension $n$, its volume is measured by the $n$-dimensional Hausdorff measure. 

%

Complete Riemannian manifolds and orbifolds, the quotient spaces of complete Riemannian manifolds by isometric actions, and the Gromov-Hausdorff limits of sequences of complete Riemannian manifolds with a uniform lower sectional curvature bound, are typical examples of Alexandrov spaces. 

For $m \in \mathbb N$ and $\delta > 0$, a point $p$ in an Alexandrov space $X$ of curvature $\ge \kappa$ 
is called $(m,\delta)$-{\it strained} if there exist pairs of points $\{(a_i, b_i)\}_{i=1}^m$ such that 
\begin{align*}
\hspace{1cm}& \tilde \angle_{\kappa} a_i p b_i > \pi - \delta, && \tilde \angle_{\kappa} a_i p b_j > \pi/2-\delta, 
\hspace{2cm}\\
& \tilde \angle_{\kappa} a_i p a_j > \pi /2 - \delta, && \tilde \angle_{\kappa} b_i p b_j > \pi/2- \delta
\end{align*}
for every $1 \le i \neq j \le m$.
The set $\{(a_i, b_i)\}$ is called an $(m, \delta)$-{\it strainer at} $p$. 
The {\it length} $\ell$ of the strainer $\{(a_i, b_i)\}$ at $p$ is defined as 
\[
\ell := \min \{|p, a_i|, |p, b_i| \mid 1 \le i \le m \}.
\]

From now on, we shall use the convention 
\[
\tilde \angle xyz :=\tilde \angle_{\kappa} xyz,
\]
when the curvature lower bound is understood.

In an $n$-dimensional Alexandrov space $X$, a point $p \in X$ is called $\delta$-{\it regular} if it is $(n, \delta)$-strained and $\delta \ll 1/n$. 
The set of all $\delta$-regular points is called a $\delta$-{\it regular part}, and is denoted by $\ca R_X(\delta)$.

In the present paper, we are concerned with a moduli space of Alexandrov spaces with curvature bounded from below by a uniform constant, say $\kappa$. 
Rescaling the metric, we assume $\kappa=-1$ without loss of generality.
Thus we deal with the moduli space $\ca A(n,D,v_0)$ as explained in the introduction.

\begin{thm}[\cite{BGP}] \label{thm:alm-isom}
Suppose that $X$ is $n$-dimensional and $\delta$ is sufficiently small with $\delta \ll 1/n$.
If $p$ is $(n,\delta)$-strained by an $(n,\delta)$-strainer $\{(a_i, b_i)\}_{i=1}^n$
with length $\ell$, then the map $\varphi$ 
defined by 
\[
\varphi (x) := (|a_1, x|, \ldots, |a_n, x|)
\]
is a $\tau(\delta, \sigma/\ell)$-almost isometry from $B(p, \sigma)$ to an open subset of $\mathbb R^n$.
\end{thm}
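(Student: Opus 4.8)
The plan is to prove that for every pair $x,y\in B(p,\sigma)$ one has $\bigl|\,|\varphi(x)\varphi(y)|-|xy|\,\bigr|\le\tau(\delta,\sigma/\ell)\,|xy|$, where $|\cdot|$ on the left is the Euclidean norm; this makes $\varphi$ a $\tau$-almost isometric embedding, and a short topological remark then gives openness of the image. Throughout one must check that each error term is of the form $\tau(\delta,\sigma/\ell)$ and is \emph{uniform} in $x,y$. \emph{Step 1: the strainer survives on a neighbourhood of $p$.} I would first show that $\{(a_i,b_i)\}$ is an $(n,\tau(\delta,\sigma/\ell))$-strainer, with length $\asymp\ell$, at every point $x\in B(p,2\sigma)$: the triangle inequality controls $|a_i x|,|b_i x|$ in terms of $|a_i p|,|b_i p|$ up to $2\sigma$, and since $\sigma/\ell\ll 1$ each strainer inequality, computed from these nearby side lengths by the continuity of $\tilde\angle$, degrades by at most $\tau(\delta,\sigma/\ell)$. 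Because the curvature is $\ge -1$, Toponogov's comparison gives $\angle a_i x b_i\ge\tilde\angle a_i x b_i>\pi-\tau$ and $\angle a_i x a_j,\ \angle a_i x b_j,\ \angle b_i x b_j>\pi/2-\tau$, so the genuine directions $\{\uparrow_x^{a_i},\uparrow_x^{b_i}\}\subset\Sigma_x$ behave like $\{\pm e_i\}\subset S^{n-1}$: each pair is $\tau$-almost antipodal and distinct pairs are $\tau$-almost orthogonal. Since $X$ is $n$-dimensional, $\Sigma_x$ is an $(n-1)$-dimensional space of curvature $\ge 1$, and such a strainer forces $\Sigma_x$ to be $\tau$-close to $S^{n-1}$; in particular, for \emph{every} $\xi\in\Sigma_x$,
\[
\sum_{i=1}^n\cos^2\angle(\uparrow_x^{a_i},\xi)=1\pm\tau(\delta,\sigma/\ell).
\]

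\emph{Step 2: a uniform first-variation estimate} — this is the technical heart. Fix $x,y\in B(p,\sigma)$, a minimal geodesic $\gamma$ from $x$ to $y$ (it lies in $B(p,2\sigma)$), and its initial direction $\xi=\uparrow_x^y\in\Sigma_x$. The hyperbolic law of cosines in the comparison triangle $\tilde\Delta a_i x y$, together with $|xy|<2\sigma\ll\ell\le|a_i x|$, gives $|a_i y|-|a_i x|=-|xy|\cos\tilde\angle a_i x y$ up to a relative error $\tau(\delta,\sigma/\ell)$ (the curvature enters only at second order in $|xy|/\ell$), so it remains to replace $\cos\tilde\angle a_i x y$ by $\cos\angle(\uparrow_x^{a_i},\xi)$ with error $\tau$. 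Toponogov gives $\cos\tilde\angle a_i x y\ge\cos\angle a_i x y$ and $\cos\tilde\angle b_i x y\ge\cos\angle b_i x y$. Moreover $F_i:=|a_i,\cdot|+|b_i,\cdot|$ is $\tau$-Lipschitz on $B(p,2\sigma)$: by the first variation formula its derivative along any geodesic through a point $z$ equals $-\cos\angle(\uparrow_z^{a_i},\cdot)-\cos\angle(\uparrow_z^{b_i},\cdot)$, which is $\pm\tau$ because $\uparrow_z^{a_i}$ and $\uparrow_z^{b_i}$ are $\tau$-almost antipodal in $\Sigma_z$ by Step 1 (use the triangle inequality and the perimeter bound $\le 2\pi$ in a space of curvature $\ge 1$). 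Applying the law-of-cosines estimate to $b_i$ as well and using $|F_i(y)-F_i(x)|\le\tau|xy|$ forces $\cos\tilde\angle a_i x y+\cos\tilde\angle b_i x y=\pm\tau$, while $\cos\angle a_i x y+\cos\angle b_i x y=\pm\tau$ by almost-antipodality; subtracting, the two nonnegative numbers $\cos\tilde\angle a_i x y-\cos\angle a_i x y$ and $\cos\tilde\angle b_i x y-\cos\angle b_i x y$ have sum $\le\tau$, hence each is $\le\tau$. Therefore
\[
|a_i y|-|a_i x|=-|xy|\cos\angle(\uparrow_x^{a_i},\xi)\pm\tau(\delta,\sigma/\ell)\,|xy|,\qquad 1\le i\le n,
\]
with the $\tau$ independent of $x,y$.

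\emph{Step 3: conclusion and openness.} Squaring, summing over $i$, and invoking the almost-Pythagorean identity of Step 1 with this $\xi$,
\[
|\varphi(x)\varphi(y)|^2=\sum_{i=1}^n\bigl(|a_i y|-|a_i x|\bigr)^2=|xy|^2\sum_{i=1}^n\cos^2\angle(\uparrow_x^{a_i},\xi)\pm\tau|xy|^2=(1\pm\tau)|xy|^2,
\]
so $\bigl|\,|\varphi(x)\varphi(y)|-|xy|\,\bigr|\le\tau(\delta,\sigma/\ell)\,|xy|$; thus $\varphi$ is injective and both $\varphi,\varphi^{-1}$ are Lipschitz with constants $\le 1+\tau(\delta,\sigma/\ell)$, i.e.\ $\varphi$ is a $\tau$-almost isometry onto its image. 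For openness of the image: $p$ being $(n,\delta)$-strained, $B(p,\sigma)$ is a topological $n$-manifold (part of the theory of strained points, cf.\ \cite{BGP}), so by invariance of domain — equivalently, by a direct degree argument using that the almost-isometry $\varphi$ carries small metric spheres about a point to sets encircling its image — the set $\varphi(B(p,\sigma))$ is open in $\mathbb R^n$. The main obstacle is Step 2: making the first-variation estimate hold with an error that is truly $\tau(\delta,\sigma/\ell)$ and uniform over all of $B(p,\sigma)\times B(p,\sigma)$; this is exactly where the strainer hypothesis and curvature $\ge -1$ are used essentially — through Toponogov's inequality, the almost-antipodality of the strainer directions, and the near-constancy of each $|a_i,\cdot|+|b_i,\cdot|$.
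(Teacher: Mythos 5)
The paper does not prove this statement: Theorem~\ref{thm:alm-isom} is quoted verbatim with the attribution \cite{BGP} and is used later (in Section~\ref{sec:gluing}) as a black box. There is therefore no proof inside the paper to compare your proposal against.

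Judged on its own terms, your reconstruction follows the standard BGP-style argument and its logical skeleton is sound. You correctly use the three classical ingredients: persistence of the strainer on $B(p,2\sigma)$ with only a $\tau(\delta,\sigma/\ell)$ loss; the dual-strainer squeezing trick, where the Toponogov inequalities $\cos\tilde\angle a_ixy\ge\cos\angle a_ixy$ and $\cos\tilde\angle b_ixy\ge\cos\angle b_ixy$ together with the near-constancy of $|a_i,\cdot|+|b_i,\cdot|$ force each comparison angle to agree with the true angle up to $\tau$; and the near-Pythagorean identity $\sum_i\cos^2\angle(\uparrow_x^{a_i},\xi)=1\pm\tau$ at strained points. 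I would flag two places where you lean on BGP facts rather than proving them. First, the leap from ``$n$ almost-orthogonal almost-antipodal pairs in an $(n-1)$-dimensional space of curvature $\ge1$'' to ``$\Sigma_x$ is $\tau$-GH-close to $S^{n-1}$,'' and then to the pointwise identity $\sum_i\cos^2\angle(\uparrow_x^{a_i},\xi)=1\pm\tau$ for \emph{every} $\xi$, compresses several steps; this is true (it is essentially \cite{BGP} Lemma on strainers at points of $\Sigma_x$ / the rigidity of the round sphere), but stating the precise lemma used would make the argument self-contained, since this identity is the linchpin of Step~3. Second, the openness of the image relies on $B(p,\sigma)$ being a topological $n$-manifold, which again is a BGP theorem about strained points; citing it explicitly (or noting that it can also be derived by a degree argument once $\varphi$ is shown to be a bi-Lipschitz embedding) would close the loop. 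With those citations in place the argument is complete, uniform in $x,y$ as required, and gives exactly the claimed $\tau(\delta,\sigma/\ell)$-almost isometry.
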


We will use Theorem \ref{thm:alm-isom} in Section \ref{sec:gluing}.

Perelman proved the following theorem, called the topological stability theorem.

\begin{thm}[\cite{Per:Alex2}, see also \cite{Kap:stab}] \label{thm:stability}
Let $D > 0$ and $n \in \mathbb N$ be fixed. 
Let $M_j$ be a sequence of $n$-dimensional compact Alexandrov spaces of diameter $\le D$ and curvature $\ge -1$ which converges to an $n$-dimensional compact Alexandrov space $M$ as $j \to \infty$. 
Then, there is $j_0$ such that $M_j$ and $M$ are homeomorphic for all $j \ge j_0$. 

In particular, the set of homeomorphism types of spaces in the moduli space $\mathcal A(n,D,v_0)$ is finite.
\end{thm}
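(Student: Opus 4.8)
The statement has two parts, and I would treat them separately. The second, finiteness assertion is a soft consequence of the first together with Gromov's precompactness theorem, so I begin there. The family of compact geodesic spaces of curvature $\ge -1$ and diameter $\le D$ is precompact in the Gromov--Hausdorff topology, and the curvature bound passes to limits; thus every limit of a sequence in $\ca A(n,D,v_0)$ is a compact Alexandrov space of curvature $\ge -1$ and diameter $\le D$. The volume hypothesis $\mathrm{vol}\ge v_0$ prevents collapse: the Hausdorff dimension of an Alexandrov limit can only drop, and if it dropped below $n$ the $n$-dimensional Hausdorff measure of the approximating spaces would tend to $0$, contradicting $\mathrm{vol}\ge v_0$; hence the limit is again $n$-dimensional, and by continuity of volume under non-collapsed convergence it has volume $\ge v_0$. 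Therefore $\overline{\ca A(n,D,v_0)}$ is compact. The stability statement says exactly that the homeomorphism type is locally constant on this moduli space, and a locally constant function on a compact space attains finitely many values; this gives the finiteness.

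For the stability statement itself --- which is Perelman's theorem --- I would argue by induction on the dimension $n$, the cases $n\le 1$ being elementary. For the inductive step, the plan is to construct, for all large $j$, a map $f_j:M\to M_j$ and to verify it is a homeomorphism. Around each $p\in M$ one fixes an \emph{admissible} tuple of distance functions, built from an $(m,\delta)$-strainer at $p$ augmented by finitely many further distance functions chosen so that the resulting map sends a small ball $B(p,r)$ to a neighbourhood of the origin in $\mathbb R^m\times C(\Sigma)$, where $\Sigma$ is (homeomorphic to) the space of directions $\Sigma_p$, and this map is topologically the obvious projection with conical fibres. The point is that distance functions and the regularity/non-criticality conditions defining such charts are stable under Gromov--Hausdorff approximation, so the analogous tuple on $M_j$ produces a chart of the same combinatorial type; moreover the spaces of directions of $M_j$ at the relevant points are Gromov--Hausdorff close to $\Sigma_p$, which is an $(n-1)$-dimensional Alexandrov space of curvature $\ge 1$, so the inductive hypothesis identifies them with $\Sigma_p$ up to homeomorphism. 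Composing the conical coordinates on $M$ with the inverse conical coordinates on $M_j$ yields local homeomorphisms $B(p,r)\to M_j$.

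The remaining, and decisive, task is to patch these local homeomorphisms into a single global one. Here one cannot simply take a partition of unity; instead one proves a relative (parametrized, and parameter-family) version of the whole statement and glues chart by chart over a handle-type decomposition of $M$ subordinate to the strainer charts, using controlled-topology input --- Siebenmann's deformation and gluing theory for the manifold-stratified ``MCS'' spaces that Alexandrov spaces turn out to be, together with the local contractibility of homeomorphism groups of manifolds in the Edwards--Kirby tradition. I expect this gluing to be the main obstacle: it forces one to carry, through the induction and through the limit $j\to\infty$, uniform quantitative control on the sizes of the charts, on the moduli of continuity of the local homeomorphisms and their inverses, and on the isotopies used to reconcile them on overlaps. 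A second substantial point, logically prior to the gluing, is the stability of the local conical structure: that near each point an Alexandrov space is homeomorphic to (Euclidean ball) $\times$ (open cone on the space of directions), in a way depending continuously enough on the space to feed the dimension induction. Both of these are the technical core of Perelman's proof (later revisited by Kapovitch), and in a write-up I would invoke them as established facts rather than reconstruct them.
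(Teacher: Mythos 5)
Your treatment matches the paper's: the paper does not prove the stability theorem itself but cites Perelman and Kapovitch for it, exactly as you ultimately do after your sketch, and for the ``in particular'' finiteness clause the paper gives the same one-line argument you give --- $\mathcal A(n,D,v_0)$ is compact in the Gromov--Hausdorff topology (your precompactness-plus-non-collapse reasoning supplies why), and the stability theorem makes homeomorphism type locally constant, so only finitely many types occur. Your outline of Perelman's proof (strainer charts, conical structure, MCS spaces, Siebenmann gluing) is an accurate high-level account of the cited argument, but the paper itself supplies none of it, so there is nothing to compare beyond the citation.
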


The last statement follows from the fact that $\mathcal A(n,D,v_0)$ is compact 
with respect to the Gromov-Hausdorff distance.

We shall define a new notion of good coverings for {\it metric spaces}. 
In \cite{MY:good}, we have proved that any Alexandrov space has a covering with geometrically and topologically good properties:
\begin{thm}[\cite{MY:good}] \label{thm:good}
For any open covering of a finite-dimensional 
Alexandrov space $X$, there is an open covering $\mathcal U$ of $X$ which is a refinement of the original covering, satisfying the following 
Let $V=\bigcap_{i=0}^k U_{j_i}$ be any non-empty intersection of finitely many elements of $\ca U$.
Then 
\begin{enumerate}
\item $V$ is convex in the sense that every minimal geodesic joining 
any two points in $V$ is contained in $V$; 
\item there exists a point $p \in V$ such that $(V,p)$ is homeomorphic to 
a cone $(C,v)$, where $v$ is the apex of $C$; 
\item there exists a Lipschitz strong deformation retraction $h : V \times [0,1] \to V$ of $V$ to the point $p$ as above (2) such that $|h_t(x), p|$ is non-increasing in 
$t\in [0,1]$ for each $x \in V$. 
\end{enumerate}
\end{thm}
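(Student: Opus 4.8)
The plan is to split Theorem~\ref{thm:good} into two pieces and then assemble them by a standard covering argument: a \emph{local good-neighborhood lemma}, asserting that every sufficiently small convex open subset of $X$ which is contained in a well-behaved region is itself ``good'' in the sense of (1)--(3), and a \emph{convex covering lemma}, producing arbitrarily fine locally finite coverings of $X$ by convex open sets that refine a prescribed covering. Granting both, one argues as follows. Given an open covering, first pass to a locally finite refinement, then to a convex refinement $\ca U=\{U_i\}$ with mesh so small that every non-empty finite intersection $V=\bigcap_i U_{j_i}$ is contained in one of the good regions. An intersection of convex sets is convex, so (1) holds for every such $V$; and the local good-neighborhood lemma supplies (2) and (3). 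The only mildly delicate bookkeeping is to keep the final covering locally finite while prescribing finitely many ``avoidance'' conditions at each point, which is routine in a finite-dimensional (hence paracompact) space.

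For the convex covering lemma the essential point is that $X$ has positive \emph{local convexity radius}: every point has arbitrarily small convex open neighborhoods. This I would deduce from the existence of local strictly concave functions on Alexandrov spaces: if $g$ is strictly concave near $p$ with $g(p)=\max g$, then the superlevel sets $\{g>c\}$, $c\uparrow\max g$, form a neighborhood basis at $p$ consisting of convex open sets, since a superlevel set of a concave function is convex. Together with $\sigma$-compactness this yields locally finite convex refinements of arbitrarily small mesh.

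For the local good-neighborhood lemma --- the heart of the matter --- let $V$ be a small convex open set sitting inside a region where Perelman's local structure theory applies (for instance $\bar V\subset B(q,r_q)$ with $\mathrm{dist}_q$ having no critical point on $\bar B(q,r_q)\setminus\{q\}$). Then $\bar V$ with the induced metric is a compact Alexandrov space of curvature $\ge\kappa$, and being small it carries a strictly concave function $g$ with a unique maximum point $p$, which can be taken interior to $V$. For (2): $p$ has a conical neighborhood by Perelman's structure theorem, and convexity of $V$ forces $V$ to meet each cone ray from $p$ in a connected initial segment (a ray leaving $V$ cannot re-enter, since the initial subsegment joins two points of $V$), so $(V,p)$ is homeomorphic to a cone with apex $p$. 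For (3): the gradient flow $\Phi_t$ of the semiconcave function $g$ in the sense of Petrunin exists, is locally Lipschitz in $(x,t)$, is non-expansive in $x$, increases $g$ along orbits (hence preserves every superlevel set and in particular maps $V$ into $V$), and converges to $p$; strict concavity makes this convergence exponential, so after reparametrizing time to $[0,1]$ and further reparametrizing by the radial function $\mathrm{dist}_p$ one extracts a Lipschitz strong deformation retraction $h$ of $V$ onto $p$ with $t\mapsto|h_t(x),p|$ non-increasing.

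I expect the main obstacle to be exactly this last lemma, and within it the simultaneous enforcement of (2) and (3) with the \emph{same} apex and of the retraction being both \emph{Lipschitz} and \emph{radially monotone}. Naive ``geodesic to $p$'' homotopies fail because geodesics to $p$ need not be unique and, under only a lower curvature bound, the resulting maps need not be Lipschitz; the natural inward flow, the gradient flow of $-\mathrm{dist}_p$, is only semiconvex and hence ill-behaved; so one is forced to work with a genuinely concave $g$ whose level sets are not metric spheres, and then to reconcile the monotonicity of $g$ along the flow with the required monotonicity of $\mathrm{dist}_p$ near $p$ (where the regularity of $\mathrm{dist}_q$ and Perelman's isotopy argument re-enter). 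Making all of these choices coherently so that \emph{every} finite intersection $V$ inherits a uniform cone identification and a uniform contracting retraction --- in particular locating the apex of each intersection and controlling the Lipschitz constants --- is the delicate technical work, which is carried out via a careful strainer-based construction.
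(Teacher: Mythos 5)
This statement (Theorem~\ref{thm:good}) is quoted from the companion paper \cite{MY:good} and is \emph{not} proved in the present paper; there is no proof here for your proposal to be compared against. So I will comment on the proposal on its own terms rather than against a reference argument.

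Your two-step decomposition (a convex covering lemma plus a local good-neighborhood lemma) is a reasonable skeleton and is in the spirit of how such results are usually approached in Alexandrov geometry: small strictly concave functions do furnish arbitrarily small convex neighborhoods, intersections of convex sets are convex, and Petrunin's gradient flow of a semiconcave function is the natural candidate for a Lipschitz contraction. The genuine gap you yourself flag is real and unresolved in the sketch: the flow of a strictly concave $g$ makes $g$ monotone, but condition~(3) demands monotonicity of $|h_t(x),p|$, i.e.\ of $\mathrm{dist}_p$, and these two monotonicities do not coincide in general --- the superlevel sets $\{g>c\}$ need not be metric balls around the maximum point $p$, and the gradient curves of $g$ need not be radial with respect to $p$. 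Stating ``after reparametrizing \dots one extracts a Lipschitz strong deformation retraction with $t\mapsto|h_t(x),p|$ non-increasing'' is precisely the nontrivial step, and as written it is an assertion rather than a proof. A second, softer concern is Lipschitz control: the gradient flow is $1$-Lipschitz in $x$ for fixed $t$, but joint Lipschitz continuity of $(x,t)\mapsto h_t(x)$ (and a uniform constant over all finite intersections $V$) requires that the strict concavity modulus of $g$ and the strainer scale be controlled uniformly; ``exponential convergence'' should be extracted explicitly from a lower bound on concavity, not merely invoked. Finally, the claim that convexity of $V$ plus a cone neighborhood of $p$ yields a cone homeomorphism $(V,p)\cong(C,v)$ needs an argument: a ray leaving $V$ indeed cannot re-enter by convexity, but one must also show that the fibered cone structure from Perelman's theorem can be aligned so that $V$ is precisely a star-shaped union of initial segments in that coordinate system, which requires using the regularity of the relevant distance function, not just abstract convexity. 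None of these are hopeless, but each is a concrete hole that a complete write-up must fill; the companion paper \cite{MY:good} is where those details are carried out.
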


By extracting some fundamental properties that the covering $\mathcal U$
in Theorem \ref{thm:good} posses, we define good coverings for general metric spaces as follows.
\begin{defn} \upshape \label{def:good cover}
A locally finite open covering $\mathcal U=\{U_j\}_{j \in J}$ of a metric space $X$ is {\it good} if it satisfies the following:
\begin{enumerate}
\item the closure of each element of $\mathcal U$ is compact;
\item every non-empty intersection $\bigcap_{i=0}^k U_{j_i}$ of 
finitely many elements of $\ca U$ has a Lipschitz strong deformation retraction to a point $p$.
\end{enumerate} 
Any such a point $p$ as in (2) is called a {\it center} of $\bigcap_{i=0}^k U_{j_i}$.
We also say that $\mathcal U$ is a good $r$-cover if $\diam(U_j)<r$ for any $j\in J$.
\end{defn}


\section{Proof of Theorem \ref{thm:lip-nerve} (compact case)} \label{sec:Lip-homotopy}
\mbox{}

In this section, we prove Theorem \ref{thm:lip-nerve} in the case when $M$ is compact.
We deal with the noncompact case in the next section.
For the proof of Theorem \ref{thm:lip-nerve}, we employ 
a basic strategy in the proof of Theorem 9.4.15 of \cite{Sha}, where 
it is proved that if a topological space has a locally finite covering all of whose 
non-empty intersection are contractible, then it has the same homotopy type as the 
nerve of the covering.
Since the argument there is only topological, we have to proceed 
in the category of (locally) Lipschitz maps. 


\subsection*{Setting and strategy}

Let $M$ be a compact metric space having a good cover $\mathcal U=\{U_j\}_{j \in J}$. 
Note that $J$ is a finite set since $\ca U$ is locally finite and $M$ is compact.
Let $J=\{1, 2, \ldots, N\}$. 
Let $\ca N_{\ca U}$ denote the {\it nerve} of $\mathcal U$, which is a simplicial complex
with the set of vertexes $\{U \in \mathcal U \mid U \neq \emptyset \}$, and 
whose $k$-simplices are unordered $(k+1)$-tuples 
$\left< U_{j_0}, U_{j_1}, \dots, U_{j_k} \right>$ of elements in 
$\mathcal U$ so that $\bigcap_{i=0}^k U_{j_i} \neq \emptyset$.
%
We denote by 
$|\mathcal N_{\ca U}|\subset \mathbb R^N$ its {\it geometric realization}, where we assume that 
$j$-th vertex $v_j :=\left< U_{j}\right>$ of $\mathcal N_{\ca U}$ is given by 
\[
v_j= (0, \ldots, {\overset {j}1}, \ldots, 0)\in \mathbb R^N.
\]

%
%
%
Let $\theta : V(\ca N_{\ca U}) \to [0,1]$ be a function defined on the set of vertices of $\ca N_{\ca U}$ satisfying 
\begin{enumerate}
\item $\sum_{j\in J} \theta(v_j)=1;$
\item $\supp (\theta)$ defines a simplex 
$\sigma_{\theta}$ of $\ca N_{\ca U}$.
\end{enumerate}

Since $\theta$ defines the point $\sum_{j \in J} \theta(v_j) v_j$ of $\sigma_{\theta}$, 
it can be considered as an element of $|\ca N_{\ca U}|$.
From now on, we identify a function $\theta$ satisfying (1), (2) with an element $\sum_{j \in J} \theta(v_j) v_j \in |\ca N_{\ca U}|$. 
That is, 
\[
|\ca N_{\ca U}| = \left\{\theta\equiv\sum_{j\in J} \theta(v_j) v_j \,\middle|\, 
\theta \,\, {\rm satisfies \,\, above}\,\, (1), (2) \right\}. 
\]
This will be useful later on (see the proof of Lemma \ref{lem:section}, for instance).
For any subset $A \subset J$, we put 
\[
U_A:=\bigcap_{j\in A} U_j.
\]

%

Each simplex $\sigma\in \ca N_{\ca U}$ defines a subset $A(\sigma)\subset J$, and we also 
use the symbol 
\[
U_{\sigma}=U_{A(\sigma)}.
\]
By definition, there is a Lipschitz contraction $\varphi:U_{\sigma}\times [0,1]\to U_{\sigma}$
to a point $p_{\sigma}$ of $U_{\sigma}$. 





We define a function $f_j$ on $M$ by 
\[
f_j(x) = \frac{|x,U_j^c|}{|x,U_j^c| + |x,p_j|},
\]
where $U_j^c$ denotes the complement of $U_j$ and $p_j$ is a center of 
$U_j$ to which $U_j$ has a Lipschitz strong deformation retraction.
Since $|x,U_j^c| + |x,p_j|\ge |p_j, U_j^c|/2>0$, 
it is straightforward to check that $f_j$ is Lipschitz.
Set 
\[
\xi_j(x)=\frac{f_j(x)}{\sum_i f_i(x)}.
\]
Then 
$\{\xi_j\}_{j\in J}$ defines a partition of unity dominated with $\ca U$ satisfying 
\begin{enumerate}
\item $\supp (\xi_j) =\bar U_j;$ 
\item each $\xi_j$ is Lipschitz;
\item $\sum_j \xi_j = 1$.
\end{enumerate}

The polyhedron $|\ca N_{\ca U}|$ has the distance induced from the 
metric of $\mathbb R^{N}$ defined as 
\[
d(x, y) = \max_{1\le i\le N} \, |x_i-y_i|.
\]

In the rest of this section, we are going to construct metric spaces $\ca D(\ca U)$ and $\ca M(p)$ together with natural bi-Lipschitz embeddings 
\beq \label{eq:diagram}
\begin{CD}
 M @>{\tau} >> \ca D(\ca U) \\
 @.
 @VV{\iota}V \\
|\ca N_{\ca U}| @>>{\Psi}> \ca M(p)
\end{CD}
\eeq
and prove that their images are Lipschitz strong deformation retracts of the target spaces. 
This strategy comes from \cite{Sha}.
The most complicated part is a construction of a Lipschitz strong deformation retraction from $\ca M(p)$ to $\iota(\ca D(\ca U))$, which will be done simplex-wisely by means of provided Lipschitz strong deformation retractions of $U_\sigma$'s to their centers.
\pmed

We divide the proof into three steps.
\pmed\n

\subsection*{{Step 1}} \,\,
We consider the following subspace of the product metric space 
$|\ca N_{\ca U}|\times M$ defined as 
\[
\ca D(\ca U):=\{(\theta, x)\in |\ca N_{\ca U}|\times M \,|\, x\in U_{\supp\,\theta} \} .
\]
Let $p:\ca D(\ca U)\to |\ca N_{\ca U}|$ and $q:\ca D(\ca U)\to M$ 
be the projection: 
\[
p(\theta, x)=\theta,\,\,\, q(\theta, x)=x.
\]

\psmall
\begin{lem} \label{lem:section}
There exists a Lipschitz map $\tau:M\to \ca D(\ca U)$ such that 
\begin{enumerate}
\item $\tau$ is a section of the map $q$ (i.e., $q \circ \tau =1_M$);
\item $\tau(M)$ is a Lipschitz strong deformation retract of $\ca D(\ca U)$.
\end{enumerate}
\end{lem}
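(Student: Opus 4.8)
The plan is to build $\tau$ directly out of the Lipschitz partition of unity $\{\xi_j\}_{j\in J}$ constructed above. I would set $\xi(x):=\sum_{j\in J}\xi_j(x)\,v_j\in\mathbb R^N$; since each $\xi_j$ is Lipschitz and $J$ is finite (recall $M$ is compact here), $\xi:M\to\mathbb R^N$ is Lipschitz, and $\sum_j\xi_j(x)=1$. The first thing to check is that $\xi$ actually lands in $|\ca N_{\ca U}|$: one has $\xi_j(x)>0\iff f_j(x)>0\iff |x,U_j^c|>0\iff x\in U_j$ (using that $U_j$ is open), so the vertex set underlying $\xi(x)$ is $A(x):=\{j:x\in U_j\}$, and this set spans a simplex of $\ca N_{\ca U}$ because $x\in\bigcap_{j\in A(x)}U_j\ne\emptyset$. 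Hence $\xi(x)\in|\ca N_{\ca U}|$ with $\supp\xi(x)=A(x)$, and in particular $x\in U_{\supp\xi(x)}$. Therefore $\tau(x):=(\xi(x),x)$ defines a Lipschitz map $M\to|\ca N_{\ca U}|\times M$ whose image lies in $\ca D(\ca U)$ and which visibly satisfies $q\circ\tau=1_M$, giving (1).

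For (2), I would use the straight-line homotopy $H:\ca D(\ca U)\times[0,1]\to|\ca N_{\ca U}|\times M$,
\[
H\big((\theta,x),t\big):=\big((1-t)\theta+t\,\xi(x),\ x\big).
\]
The one point that needs genuine care is that $H$ takes values inside $\ca D(\ca U)$. Since $(\theta,x)\in\ca D(\ca U)$ means $x\in U_{\supp\theta}$, i.e.\ $x\in U_j$ for every $j\in\supp\theta$, we obtain $\supp\theta\subseteq A(x)=\supp\xi(x)$; hence $\theta$ and $\xi(x)$ both lie in the single geometric simplex of $|\ca N_{\ca U}|$ spanned by $\{v_j:j\in A(x)\}$, which is a convex subset of $\mathbb R^N$, so the affine combination $(1-t)\theta+t\,\xi(x)$ lies in that simplex, in particular in $|\ca N_{\ca U}|$, with support contained in $A(x)$. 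Since $U_{A(x)}\subseteq U_{\supp((1-t)\theta+t\xi(x))}$ and $x\in U_{A(x)}$, the pair $H((\theta,x),t)$ indeed belongs to $\ca D(\ca U)$.

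It then remains to verify the routine properties: $H$ is Lipschitz (its first coordinate is affine in $t$ with coefficients the bounded Lipschitz maps $\theta\mapsto\theta$ and $x\mapsto\xi(x)$, hence Lipschitz, and its second coordinate is the projection); $H((\theta,x),0)=(\theta,x)$; $H((\theta,x),1)=\tau(x)\in\tau(M)$; and $H$ fixes $\tau(M)$ pointwise because $\theta=\xi(x)$ forces $(1-t)\theta+t\,\xi(x)=\xi(x)$ for all $t$. Thus $H$ is a Lipschitz strong deformation retraction of $\ca D(\ca U)$ onto $\tau(M)$, which is (2).

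I do not expect a serious obstacle in this lemma: the nontrivial geometric input of Theorem \ref{thm:good}—the existence of Lipschitz contractions of the intersections $U_\sigma$ to their centers—is not used here and will instead be needed for the companion statement concerning $\ca M(p)$ and the map $\Psi$. In the present argument the only delicate points are the combinatorial bookkeeping of supports that keeps the homotopy inside $|\ca N_{\ca U}|$ (handled by the inclusion $\supp\theta\subseteq\{j:x\in U_j\}$), together with the elementary fact that a product of bounded Lipschitz functions is Lipschitz, so that constants are controlled.
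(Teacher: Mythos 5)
Your construction is exactly the paper's: the section $\tau(x)=(\xi(x),x)$ with $\xi=\Theta$ given by the partition of unity, and the straight-line (affine) homotopy $(1-t)\theta+t\,\xi(x)$ in the simplex coordinate. The paper states the support-containment point ($\supp\theta$ is a face of the simplex $s(x)$ spanned by $\{j:x\in U_j\}$) more tersely, but your argument fills it in correctly and adds nothing that departs from the paper's route.
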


\begin{proof}
Define $\tau:M\to \ca D(\ca U)$ by 
\[
\tau(x) = (\Theta(x), x),
\]
where $\Theta(x)\in |\ca N_{\ca U}|$ is defined by 
$\Theta(x)(v_j) = \xi_j(x)$, $j\in J$.
Obviously 
$\tau$ and $\Theta:M\to |\mathcal N_{\ca U}|$ are Lipschitz.
For any $x\in M$, let $s(x):=\{j\in J\,|\, x\in U_j\}$, which forms a 
simplex of $\ca N_{\ca U}$. 
For any $(\theta, x)\in\ca D(\ca U)$, $\supp (\theta)$ defines a face of 
$s(x)$. Thus we can define the Lipschitz map 
$H:\ca D(\ca U)\times [0,1] \to \ca D(\ca U)$ by 
\beq
H( \theta,x,s) =(s\Theta(x) + (1-s)\theta, x) \label{eq:tau}
\eeq
satisfying $H(\theta,x, 0)=1_{\ca D(\ca U)}(\theta, x)$, 
$H(\theta, x, 1)=(\Theta(x),x)=\tau(x)$ and $H(\tau(x),s)=\tau(x)$
for every $s\in [0,1]$. 
Obviously
$H$ is Lipschitz.
This completes the proof.
\end{proof}
\psmall
\begin{cor} \label{cor:M-D}
$M$ has the same Lipschitz homotopy type as $\ca D(\ca U)$. 
\end{cor}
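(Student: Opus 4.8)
The plan is to read this off Lemma \ref{lem:section} with essentially no extra work. First I would observe that, since $q\circ\tau=1_M$ and $\tau$ is injective (the second coordinate of $\tau(x)$ is $x$), the restriction $q|_{\tau(M)}:\tau(M)\to M$ is a two-sided inverse of $\tau:M\to\tau(M)$. Both $\tau$ and $q$ are Lipschitz ($q$ is the projection of a product metric space, hence $1$-Lipschitz), so $\tau:M\to\tau(M)$ is a bi-Lipschitz homeomorphism; in particular it is a Lipschitz homotopy equivalence, with homotopy inverse $q|_{\tau(M)}$ and constant homotopies on both sides.

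Next, by part (2) of Lemma \ref{lem:section}, $\tau(M)$ is a Lipschitz strong deformation retract of $\ca D(\ca U)$, so by the remark following Definition \ref{def:Lip-hom} the inclusion $\iota_0:\tau(M)\hookrightarrow\ca D(\ca U)$ is a Lipschitz homotopy equivalence, with homotopy inverse the retraction $r:\ca D(\ca U)\to\tau(M)$ given by $r=H(\cdot,1)$, where $H$ is the deformation retraction constructed in the proof of the lemma.

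Finally, I would compose: $\tau=\iota_0\circ\tau:M\to\ca D(\ca U)$ and $q|_{\tau(M)}\circ r:\ca D(\ca U)\to M$ are Lipschitz maps whose two composites are Lipschitz homotopic to the respective identities, since a composition of Lipschitz homotopy equivalences is again one (one simply concatenates the relevant Lipschitz homotopies, which remain Lipschitz). Hence $M$ and $\ca D(\ca U)$ are Lipschitz homotopy equivalent, which is the assertion. I do not anticipate any obstacle here: the entire content is already in Lemma \ref{lem:section}, and what remains is only to assemble the two elementary Lipschitz homotopy equivalences and to note that $q$ is Lipschitz.
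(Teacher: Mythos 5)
Your argument is exactly the paper's: you note that $\tau:M\to\tau(M)$ is a bi-Lipschitz homeomorphism (since $q|_{\tau(M)}$ is a $1$-Lipschitz two-sided inverse of the Lipschitz map $\tau$), and then invoke Lemma \ref{lem:section}(2) to conclude. The paper's proof is the same two observations stated even more briefly.
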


\begin{proof}
Let $q':\tau(M)\to M$ be the restriction of $q$ to $\tau(M)$.
Since $\tau$ is Lipschitz and $q'$ is $1$-Lipschitz with 
$q' \circ \tau=1_M$ and $\tau \circ q'=1_{\tau(M)}$,
$M$ and $\tau(M)$ are bi-Lipschitz homeomorphic to each other.
The conclusion follows from Lemma \ref{lem:section}.
\end{proof}

\pmed\n
\subsection*{{Step 2.}}\,\,

For $L>0$ (see {\eqref{eq:ell}} for the proper choice of $L$), 
consider the mapping cylinder of $p$:
\[
\ca M(p):= \ca D(\ca U)\times [0,L]\amalg |\ca N_{\ca U}|/(\theta, x,L)\sim \theta.
\]
Recall that 
\[
\ca D(\ca U)=\bigcup_{\sigma\in \ca N_{\ca U}} \sigma\times U_{{\sigma}}
\subset |\ca N_{\ca U}|\times M.
\]
{The canonical correspondence $[(\theta, x,t)]\to (\theta, [(x,t)])$ gives rise to the identification} 
\[
\ca M(p)=\bigcup_{\sigma\in \ca N_{\ca U}} \sigma\times 
K(U_{{\sigma}})
\subset |\ca N_{\ca U}|\times K(M),
\]
where $K(V)=V\times [0,L]/V\times L$ denotes the Euclidean cone. From now on, 
we consider the metric of $\ca M(p)$ induced from that 
of the product metric $|\ca N_{\ca U}|\times K(M)$, where the metric of 
the Euclidean cone 
$K(M)=M\times [0,L]/M\times L$ is defined as 
\begin{align*}
|[x,t], & [x',t']|^2 \\
&= (L- t)^2 + (L-t')^2 -2(L-t)(L-t')\cos (\min\{\pi, |x,x'|\}),
\end{align*}
for $[x,t],[x',t']\in K(M)$.

Note that there is a natural isometric embedding
${\Psi}: |\ca N_{\ca U}| \to \ca M(p)$ defined by 
\[
{\Psi}(\theta)=(\theta, [x,L]) =[\theta]{=(\theta, v_M)},
\]
{where $v_M$ denotes the vertex of $K(M)$.}
\psmall
\begin{lem} \label{lem:N-M}
$|\ca N_{\ca U}|$ is a Lipschitz strong deformation retract of $\ca M(p)$. 
\end{lem}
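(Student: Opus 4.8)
The plan is to construct an explicit Lipschitz strong deformation retraction of $\ca M(p)$ onto the subcomplex $\Psi(|\ca N_{\ca U}|)$ by simply collapsing the cone direction. Recall that in the identification
\[
\ca M(p)=\bigcup_{\sigma\in\ca N_{\ca U}}\sigma\times K(U_\sigma)\subset|\ca N_{\ca U}|\times K(M),
\]
a generic point has the form $(\theta,[x,t])$ with $\theta\in|\ca N_{\ca U}|$, $x\in U_{\supp\theta}$ and $t\in[0,L]$; the embedded copy of $|\ca N_{\ca U}|$ is precisely the set of points with $t=L$, i.e.\ those of the form $(\theta,v_M)$. First I would define
\[
R:\ca M(p)\times[0,1]\to\ca M(p),\qquad R\bigl((\theta,[x,t]),s\bigr)=(\theta,[x,(1-s)t+sL]).
\]
This is well defined on $\ca M(p)$: it does not change $\theta$, so it preserves the condition $x\in U_{\supp\theta}$, and it respects the cone identification $M\times\{L\}\mapsto v_M$ since increasing the second coordinate to $L$ lands in the vertex. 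One checks immediately $R(\cdot,0)=1_{\ca M(p)}$, $R(\cdot,1)=\Psi\circ p$ has image in $\Psi(|\ca N_{\ca U}|)$, and $R$ fixes $\Psi(|\ca N_{\ca U}|)$ pointwise for all $s$ (there $t=L$, so $(1-s)t+sL=L$). Hence $R$ is a strong deformation retraction in the set-theoretic sense, and it only remains to verify that it is Lipschitz for the ambient product metric.

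The Lipschitz estimate is the technical heart. Since the $|\ca N_{\ca U}|$-coordinate is unaffected, it suffices to control the $K(M)$-coordinate, i.e.\ to show the map
\[
G:K(M)\times[0,1]\to K(M),\qquad G([x,t],s)=[x,(1-s)t+sL]
\]
is Lipschitz for the cone metric written out in the excerpt. I would split this into: (i) Lipschitz continuity in $s$ for fixed $[x,t]$: with $x$ held fixed the cone distance between $[x,\alpha]$ and $[x,\beta]$ is exactly $|\alpha-\beta|$, so $|G([x,t],s),G([x,t],s')|=|s-s'|\,|L-t|\le L\,|s-s'|$; and (ii) Lipschitz continuity in $[x,t]$ for fixed $s$: writing $r=L-t$, $r'=L-t'$ and using that the cone distance is a metric, one estimates
\[
\bigl|G([x,t],s),G([x',t'],s)\bigr|\le\bigl|[x,(1-s)t+sL],[x',(1-s)t+sL]\bigr|+|(1-s)(t-t')|,
\]
and the first term, being a cone distance at common cone-parameter $(1-s)t+sL$ (so common radius $(1-s)r$), is $\le(1-s)r\cdot\min\{\pi,|xx'|\}\le(1-s)\,|[x,t],[x',t']|$ after comparing with the original cone distance at radii $r,r'$ — here one uses the elementary fact that the cone metric restricted to a fixed $M\times\{t_0\}$ dominates (up to a factor depending only on the radii, which are bounded by $L$) the corresponding expression at other radii; standard cone-geometry estimates give a uniform Lipschitz bound in terms of $L$. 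Combining (i) and (ii) via the triangle inequality on $\ca M(p)\subset|\ca N_{\ca U}|\times K(M)$ yields a global Lipschitz constant for $R$ depending only on $L$.

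I expect step (ii) — the Lipschitz estimate for the cone coordinate, specifically comparing cone distances at different radii under the radial rescaling — to be the main obstacle, since the cone metric is only piecewise smooth and the naive bound degenerates as the cone parameters approach $L$ (radius $\to 0$); one must check that the rescaling $r\mapsto(1-s)r$ is genuinely $(1-s)$-Lipschitz for the cone metric, which follows from the monotonicity of $\rho\mapsto\rho^2+(\rho')^2-2\rho\rho'\cos\alpha$ type expressions but requires a careful case analysis (in particular using $\min\{\pi,|xx'|\}$ to stay in the regime where cosine is monotone). Everything else — well-definedness on the quotient, compatibility with the simplex-wise decomposition $\sigma\times K(U_\sigma)$, and the strong-deformation-retract properties — is routine. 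Once $R$ is shown Lipschitz, Lemma \ref{lem:N-M} follows directly, and the chain of bi-Lipschitz embeddings in \eqref{eq:diagram} together with Lemmas \ref{lem:section}, \ref{lem:N-M} (and the still-to-come construction of $\Psi$ as a Lipschitz strong deformation retract target) will assemble into the proof of Theorem \ref{thm:lip-nerve} in the compact case.
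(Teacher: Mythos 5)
Your homotopy $R\bigl((\theta,[x,t]),s\bigr)=(\theta,[x,(1-s)t+sL])$ is exactly the map $F$ used in the paper, and the verification that it is a well-defined set-theoretic strong deformation retraction onto $\Psi(|\ca N_{\ca U}|)$ matches the paper's proof. The difference is only in how the Lipschitz bound is checked. The paper works directly with squared distances: it expands $|F(\theta,[x,t],s),F(\theta',[x',t'],s')|^2=|\theta,\theta'|^2+|[x,u],[x',u']|^2$ with $u=(1-s)t+sL$, bounds $|[x,u],[x',u']|^2\le(u-u')^2+(L-u)(L-u')|x,x'|^2$ via $1-\cos\alpha\le\alpha^2/2$, observes $(L-u)(L-u')=(1-s)(1-s')(L-t)(L-t')\le(L-t)(L-t')$ and $|u-u'|\le|t-t'|+L|s-s'|$, and then compares against the matching lower bound $|(\theta,[x,t],s),(\theta',[x',t'],s')|^2\ge|\theta,\theta'|^2+|t-t'|^2+\tfrac12(L-t)(L-t')|x,x'|^2+|s-s'|^2$. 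This sidesteps the radius-comparison issue entirely.

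Your triangle-inequality route works, but the specific inequality you wrote, namely $(1-s)(L-t)\min\{\pi,|xx'|\}\le(1-s)\,|[x,t],[x',t']|$, is not correct as stated: take $t=t'$ and note that the cone distance there is $2(L-t)\sin\bigl(\min\{\pi,|xx'|\}/2\bigr)$, which is strictly smaller than $(L-t)\min\{\pi,|xx'|\}$, so you only get the bound up to a constant $\pi/2$; and once $t\ne t'$ the factor $\sqrt{(L-t)/(L-t')}$ that appears when you try to pass from the lower bound to the claim is not bounded near the cone vertex. The fix within your framework is to compare the fixed-radius term to $\bigl|[x,t],[x',t]\bigr|$ (same second coordinate), where monotonicity of the cone distance in the radius gives $\bigl|[x,(1-s)t+sL],[x',(1-s)t+sL]\bigr|\le\bigl|[x,t],[x',t]\bigr|$ immediately since $(1-s)(L-t)\le L-t$, and then use $\bigl|[x,t],[x',t]\bigr|\le\bigl|[x,t],[x',t']\bigr|+|t-t'|$ together with $|t-t'|\le\bigl|[x,t],[x',t']\bigr|$. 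With that correction your argument closes; but note that the paper's squared-distance computation is cleaner and avoids having to patch the estimate.
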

\begin{proof}
Define ${\Psi'}:\ca M(p)\to |\ca N_{\ca U}|$ 
by
\[
{\Psi'}(\theta, [x,t]) = \theta.
\]
Since
\begin{align*}
| {\Psi'}(\theta, [x,t]) , & {\Psi'}(\theta', [x',t']) | \\
& = |\theta,\theta'| \\
& \le \sqrt{|\theta,\theta'|^2 + |[x,t], [x',t']|^2} \\
&=|(\theta, [x,t]), (\theta', [x',t'])|, 
\end{align*} 
and since $|{\Psi}(\theta_1), {\Psi}(\theta_2)| =|\theta_1,\theta_2|$,
both ${\Psi}$ and ${\Psi'}$ are $1$-Lipschitz.

Note that ${\Psi\circ\Psi'}(\theta, [x,t])=[\theta]=(\theta, [x, L])$ and
${\Psi'\circ\Psi}=1_{|\ca N_{\ca U}|}$.
Define $F:\ca M(p)\times [0,1]\to \ca M(p)$ by 
\psmall
\[
F(\theta,[x,t],s) = (\theta, [x, (1-s)t+sL]).
\]
\psmall
Then $F_0=1_{\ca M(p)}$ and $F_1={\Psi\circ\Psi'}$.
We show that $F$ is Lipschitz. Since it suffices to prove that 
it is locally Lipschitz., let us assume that $(\theta,[x,t],s)$ and $(\theta',[x',t'],s')$ are 
close to each other.
We then have
\[
|F(\theta,[x,t],s), F(\theta',[x',t'],s')|^2 = |\theta,\theta'|^2
+ |[x,u], [x',u']|^2,
\]
where we set $u=(1-s)t+sL$, $u'=(1-s')t'+s'L$, and 
\begin{align*}
|[x,u], & [x',u']|^2 \\
&= (L-u)^2 +(L-u')^2 -2(L-u)(L-u')\cos |x,x'| \\
& \le (u-u')^2 + (L-u)(L-u')|x,x'|^2.
\end{align*}
Since $|u-u'|\le (1-s')|t-t'|+(L-t)|s-s'|$, we have 
\begin{align*}
|F(\theta,[x,t], &s), F(\theta',[x',t'],s')|^2 \\
& \le |\theta,\theta'|^2 + 2|t-t'|^2 +2|s-s'|^2+ (L-u)(L-u')|x, x'|^2. \label{eq:Fs}
\end{align*}
Similarly we have 
\begin{align*}
|(\theta,[x,t],s), & (\theta',[x',t'],s')|^2\\
&\ge |\theta,\theta'|^2 + |t-t'|^2+ \frac{1}{2}(L-t)(L-t')|x, x'|^2 {+} |s-s'|^2. 
\end{align*}
Combining those inequalities, we conclude that $F$ is Lipschitz. 
\end{proof}

%

\pmed\n
\subsection*{{Step 3.}}\,\,
{
Let us define $\iota:\ca D(\ca U)\to \ca D(\ca U)\times 0\subset \ca M(p)$ 
by $\iota(\theta,x)=(\theta,x,0)$.
In this last step,} we prove
\psmall
\begin{prop} \label{prop:D-M} {
There exists a Lipschitz strong deformation retraction 
$\Phi:\ca M(p)\times [0,1] \to \ca M(p)$ of $\ca M(p)$ to 
$\ca D(\ca U)\times 0$.}
\end{prop}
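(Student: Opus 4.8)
The plan is to construct $\Phi$ one simplex at a time, working upward through the skeleta of $\ca N_{\ca U}$, and glue the pieces together. Recall the decomposition
\[
\ca M(p)=\bigcup_{\sigma\in\ca N_{\ca U}} \sigma\times K(U_\sigma)\subset |\ca N_{\ca U}|\times K(M),
\]
so that over a single simplex $\sigma$ the relevant piece is $\sigma\times K(U_\sigma)$, and over its boundary $\partial\sigma$ it is $\partial\sigma\times K(U_\sigma)$ together with the faces $\sigma'\times K(U_{\sigma'})$ for $\sigma'\subsetneq\sigma$ (note $U_{\sigma'}\supset U_\sigma$). The target to retract onto is $\ca D(\ca U)\times 0 = \bigcup_\sigma \sigma\times (U_\sigma\times 0)$. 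The Lipschitz strong deformation retraction $\varphi_\sigma:U_\sigma\times[0,1]\to U_\sigma$ of $U_\sigma$ to its center $p_\sigma$, guaranteed by goodness of $\ca U$, is the basic building block: it induces a deformation of $K(U_\sigma)$ toward the segment $\{p_\sigma\}\times[0,L]$, which is itself a cone over a point and hence Lipschitz contractible to the vertex $v_M$ — and the vertex, together with $U_\sigma\times 0$, is exactly what we are allowed to land in. So over $\sigma$ I would first push $K(U_\sigma)$ to $(U_\sigma\times 0)\cup(\text{segment})$ using $\varphi_\sigma$ in the $M$-direction while keeping the cone parameter fixed, then collapse the segment into $U_\sigma\times 0$ via the cone structure; both stages are Lipschitz because $\varphi_\sigma$ is Lipschitz and the cone metric formula is smooth away from $t=L$ (and one checks continuity at the vertex directly, as in the proof of Lemma \ref{lem:N-M}).

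The main technical work is the inductive gluing so that the resulting global $\Phi$ is well-defined and Lipschitz across simplex boundaries. I would proceed as follows. Suppose $\Phi$ has been defined on the part of $\ca M(p)$ lying over the $(k-1)$-skeleton of $\ca N_{\ca U}$, restricting correctly on each face and fixing $\ca D(\ca U)\times 0$ pointwise. For a $k$-simplex $\sigma$, the already-defined retraction on $\partial\sigma\times K(U_\sigma)$ and on the lower faces gives a Lipschitz strong deformation retraction of the ``boundary collar'' of $\sigma\times K(U_\sigma)$ onto its trace in $\ca D(\ca U)\times 0$; I extend this over all of $\sigma\times K(U_\sigma)$ by combining it with the fiberwise retraction $\varphi_\sigma$ of the previous paragraph, using barycentric-coordinate weights on $\sigma$ to interpolate between the boundary behaviour and the interior behaviour (a standard ``cone off the boundary'' construction). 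Concretely, for $\theta\in\sigma$ one can write $\theta=(1-\lambda)\hat\theta+\lambda c_\sigma$ where $c_\sigma$ is the barycenter and $\hat\theta\in\partial\sigma$, and use $\lambda$ to blend the boundary retraction (applied to $\hat\theta$) with the center-retraction $\varphi_\sigma$; this is Lipschitz in all variables because all the ingredients are Lipschitz and the blend is affine. One must take the cone height $L$ large enough relative to the diameters $\diam U_\sigma$ — this is the role of the constraint $L$ referenced in \eqref{eq:ell} — so that the various cone metrics are uniformly comparable and the local Lipschitz estimates assemble into a global Lipschitz constant; since $J$ is finite and there are finitely many simplices, finitely many such local estimates suffice.

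The delicate point, and the one I would write most carefully, is checking the Lipschitz bound for $\Phi$ near the cone vertex $v_M$ and along the lower-dimensional strata, exactly in the spirit of the estimate at the end of the proof of Lemma \ref{lem:N-M}: when two points are close but lie in different simplices $\sigma,\sigma'$ with, say, $\sigma'\subset\sigma$, one needs the formulas for $\Phi$ over $\sigma$ and over $\sigma'$ to agree on the common face and to have comparable derivatives there. This forces the extension over $\sigma$ to be defined so that on $\theta\in\partial\sigma$ it literally equals the previously constructed map (which is automatic with the barycentric blend, since $\lambda=0$ on $\partial\sigma$), and it forces the $M$-component retractions $\varphi_\sigma$ to be used only through the ambient cone metric of $K(M)$, never through an intrinsic metric on $K(U_\sigma)$, so that distances are measured consistently across pieces. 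Once these compatibilities are in place, a computation of the same type as in Lemma \ref{lem:N-M} — bounding $|\theta,\theta'|^2 + |[x,u],[x',u']|^2$ from above by a constant multiple of $|\theta,\theta'|^2+|x,x'|^2\cdot(L-u)(L-u')+|u-u'|^2+|s-s'|^2$, and bounding the distance in the source from below by a comparable quantity — yields the Lipschitz estimate on each simplex, hence globally. Finally, $\Phi_0=1_{\ca M(p)}$, $\Phi_1(\ca M(p))\subset\ca D(\ca U)\times 0$, and $\Phi_t$ fixes $\ca D(\ca U)\times 0$ by construction, so $\Phi$ is the desired Lipschitz strong deformation retraction.
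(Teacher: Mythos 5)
Your basic building block is the same as the paper's: retract each cone--cylinder $\sigma\times K(U_\sigma)$ using the given Lipschitz contraction $\varphi_\sigma$ of $U_\sigma$ to its center $p_\sigma$, and assemble the pieces. But the direction of your induction is where the argument breaks down. You extend \emph{upward} through the skeleta: first retract over the lower skeleton, then, for a $k$-simplex $\sigma$, blend that already-constructed boundary retraction with the interior retraction given by $\varphi_\sigma$ using barycentric weights. The trouble is that the boundary retraction over a face $\sigma'\subsetneq\sigma$ is built from $\varphi_{\sigma'}$ and aims at the center $p_{\sigma'}\in U_{\sigma'}$, whereas the interior retraction aims at $p_\sigma\in U_\sigma$; these are unrelated points of $M$, and there is no affine (or any other canonical Lipschitz) way to ``blend'' two point-valued maps into a general metric space $M$ or $K(M)$ with scalar weights. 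Writing $\theta=(1-\lambda)\hat\theta+\lambda c_\sigma$ and claiming the blend ``is Lipschitz because all ingredients are and the blend is affine'' glosses over exactly the step that cannot be carried out: the targets live in $M$, not a linear space, and in fact the boundary retraction applied to $(\hat\theta,[y,t])$ moves the $M$-component out of $U_\sigma$ entirely (into $U_{\sigma'}\supset U_\sigma$), so the two things being blended do not even sit in a common coordinate chart.

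The paper avoids this by running the induction \emph{downward}, from the top dimension $k_0$ to $0$ (Lemma~\ref{lem:induct} and \eqref{eq:sequence}). At the $k$-th stage one retracts $\sigma\times K(U_\sigma)$, for every $k$-simplex $\sigma$ simultaneously, onto $(\sigma\times U_\sigma\times 0)\cup(\partial\sigma\times K(U_\sigma))$ (Sublemma~\ref{sublem:simplex-wise}), and this retraction fixes $\partial\sigma\times K(U_\sigma)$ pointwise and uses only the single contraction $\varphi_\sigma$. Consequently no two contractions $\varphi_\sigma,\varphi_{\sigma'}$ ever need to be reconciled: distinct $k$-simplices meet only along their boundaries, which are fixed, and lower simplices are touched only at a later stage with their own $\varphi_{\sigma'}$. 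The Lipschitz estimate is then a finite composition of Lipschitz retractions. A secondary point: the constraint on $L$ is simply $L>6$ (\eqref{eq:ell}), a fixed numerical bound needed for the cone-metric estimates (the angular part is already capped by $\min\{\pi,|x,x'|\}$); $L$ does not need to grow with $\diam U_\sigma$. If you want to salvage an upward induction you would have to replace the barycentric blend with an explicit Lipschitz extension formula that never needs to interpolate between values in $M$ — which, once you try to write it down, is essentially the paper's downward construction applied simplex by simplex.
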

\psmall
The compact case of Theorem \ref{thm:lip-nerve} now follows from Corollary \ref{cor:M-D},
Lemma \ref{lem:N-M} and Proposition \ref{prop:D-M}.

Let 
\beq
L > 6. \label{eq:ell}
\eeq
Let $k_0$ denote the dimension of $\ca N$. 
For each $0\le k\le k_0$,
let $\ca N^{(k)}$ denote the $k$-skeleton of $\ca N_{\ca U}$, and 
$\ca D^k := p^{-1}(|\ca N^{(k)}|)$ and $p^k:=p|_{\ca D^k}:\ca D^k \to \ca N^{(k)}$.
Let $\ca M(p^k)$ denote the mapping cone of $p^k$:
\psmall
\[
\ca M(p^k):= \ca D^k\times [0,L]\amalg |\ca N^{(k)}|/(\theta, x,L)\sim \theta.
\]
\psmall
{
As before, we have 
\[
\ca D^k =\bigcup_{\sigma\in\ca N^{(k)}} \sigma\times U_\sigma,
\]
\[
\ca M(p^k)=\bigcup_{\sigma\in \ca N^{(k)}} \sigma\times 
K(U_{\sigma})
\subset |\ca N_{\ca U}|\times K(M).
\] 
}
\psmall
\begin{lem}\label{lem:induct}
For each $k$, 
{
There exists a Lipschitz strong deformation retraction 
$\Phi^k:\ca M(p^k)\times [0,1] \to \ca M(p^k)$ of $\ca M(p^k)$ to 
$\ca D^k\times 0\bigcup \ca M(p^{k-1})$.}
%
\end{lem}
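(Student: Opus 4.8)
\textbf{Proof proposal for Lemma \ref{lem:induct}.}

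The plan is to build $\Phi^k$ simplex by simplex over the $k$-simplices of $\ca N_{\ca U}$, using the Lipschitz contractions $\varphi:U_\sigma\times[0,1]\to U_\sigma$ to the centers $p_\sigma$ supplied by goodness of $\ca U$. Fix a $k$-simplex $\sigma$ with $A(\sigma)=\{j_0,\dots,j_k\}$. Over $\sigma$ the relevant piece of $\ca M(p^k)$ is $\sigma\times K(U_\sigma)$, and the part we must deform into is the ``collar plus base'' $\bigl(\pa\sigma\times K(U_\sigma)\bigr)\cup\bigl(\sigma\times U_\sigma\times 0\bigr)$, i.e.\ the intersection of $\sigma\times K(U_\sigma)$ with $\ca D^k\times 0\cup\ca M(p^{k-1})$. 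First I would construct, on the model block $\sigma\times K(U_\sigma)$, a Lipschitz strong deformation retraction $\Phi^k_\sigma$ of $\sigma\times K(U_\sigma)$ onto that subset which restricts to the identity on it. The idea is to combine two motions: a radial/vertical motion in the cone coordinate $t\in[0,L]$ that pushes the cone factor $K(U_\sigma)$ toward its base $U_\sigma\times 0$, composed with the contraction $\varphi$ of $U_\sigma$ to $p_\sigma$ applied with a speed that is cut off near $\pa\sigma$; concretely, using a Lipschitz function $\rho_\sigma:\sigma\to[0,1]$ that is $1$ at the barycenter and $0$ on $\pa\sigma$, one interpolates so that points over $\pa\sigma$ are held fixed while points over the interior are first lifted toward the apex $v_M$ and then (or simultaneously) slid along $\varphi$ to $(\theta,[p_\sigma,t])$, all inside $\sigma\times K(U_\sigma)$. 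The constraint $L>6$ from \eqref{eq:ell} is exactly what gives the cone enough ``room'' that this combined motion stays Lipschitz with a constant controlled by that of $\varphi$ and of the metric formula for $K(M)$ recorded in Step 2.

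Next I would check that the blockwise maps $\Phi^k_\sigma$ are compatible on overlaps: two $k$-simplices $\sigma,\sigma'$ meet along a common face $\rho\subsetneq\sigma\cap\sigma'$ which lies in $|\ca N^{(k-1)}|$, so over $\rho$ both $\Phi^k_\sigma$ and $\Phi^k_{\sigma'}$ are required to be the identity (that face sits inside the target $\ca M(p^{k-1})$); hence the cutoff $\rho_\sigma$ vanishing on $\pa\sigma$ forces agreement there, and the maps glue to a well-defined map $\Phi^k:\ca M(p^k)\times[0,1]\to\ca M(p^k)$. Since $\ca N^{(k)}$ is locally finite (only finitely many simplices, $M$ being compact) and each $\Phi^k_\sigma$ is Lipschitz, the glued map is Lipschitz: near any point only finitely many blocks are involved and they agree on the closed overlaps, so a standard Lebesgue-number / pasting argument for finitely many closed Lipschitz pieces applies. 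One then reads off $\Phi^k_0=\mathrm{id}$, $\Phi^k_1$ has image in $\ca D^k\times 0\cup\ca M(p^{k-1})$, and $\Phi^k$ fixes that subset pointwise for all time, which is precisely the assertion.

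The main obstacle I anticipate is the Lipschitz estimate for the blockwise retraction $\Phi^k_\sigma$, not its mere existence: one is composing the contraction $\varphi$ of $U_\sigma$ (whose Lipschitz constant is not uniform, but is fixed once $\sigma$ is fixed, which is enough since $J$ is finite) with the cone projection, and the cone metric on $K(M)$ degenerates the $M$-direction near the apex — the factor $(L-u)(L-u')$ in the distance formula. The cutoff $\rho_\sigma$ must therefore be introduced and tuned so that wherever $\varphi$ is being applied at nonzero speed one is staying at cone-height bounded away from $L$, so that the $(L-u)$ weights stay bounded below and the composition does not blow up; quantifying this, and in particular using $L>6$ to absorb the constants exactly as in the proof of Lemma \ref{lem:N-M}, is the delicate computational heart of the argument. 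Everything else — the face-compatibility and the finite pasting — is routine given the setup already in place.
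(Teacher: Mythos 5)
Your overall framework is the same as the paper's: Lemma \ref{lem:induct} is reduced to a simplexwise Sublemma (\ref{sublem:simplex-wise}) constructing a Lipschitz strong deformation retraction of $\sigma\times K(U_\sigma)$ onto $\sigma\times U_\sigma\times 0\cup\pa\sigma\times K(U_\sigma)$ for each $k$-simplex $\sigma$, and the blockwise maps are glued, using that they are the identity on the shared faces, which lie in $\ca M(p^{k-1})$. You also correctly identify the Lipschitz estimate near the apex $v_M$ as the delicate point. However, your proposed blockwise retraction does not work as described, for a structural reason.

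The gap is this: a strong deformation retraction onto $\sigma\times U_\sigma\times 0\cup\pa\sigma\times K(U_\sigma)$ must decide, for each $(\theta,[y,t])$, whether it will land on the base $\sigma\times U_\sigma\times 0$ or on the side $\pa\sigma\times K(U_\sigma)$, and this decision depends on \emph{both} the simplex coordinate $\theta$ and the cone-height $t$ (a point near $\pa\sigma$ at large $t$ must go to the side, while the same $\theta$ at small $t$ must go to the base). A cutoff $\rho_\sigma:\sigma\to[0,1]$ depending only on $\theta$ cannot encode this. The paper does it via the radial projection $r:\sigma\times[0,L]\to\sigma\times 0\cup\pa\sigma\times[0,L]$ from $(x^*,2L)$ (with $x^*$ the barycenter), writing $r(x,t)=(\psi_0(x,t),u(x,t))$; then $u(x,t)$ serves both as the $\varphi$-contraction time $t-u(x,t)$ for the $U_\sigma$ factor and, via the interpolated function $w(x,t)$, as the terminal cone-height. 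Your concrete endpoint $(\theta,[p_\sigma,t])$ is not in the retraction target for $\theta$ interior and $t\ne 0$, and the verbal description is internally inconsistent: you say the motion "pushes the cone factor toward its base $U_\sigma\times 0$" but then that interior points are "first lifted toward the apex $v_M$." In the paper's construction the cone-height $(1-\nu(s))t+\nu(s)w(x,t)$ is non-increasing in $s$, so nothing ever moves toward the apex. Finally, $L>6$ is not used in Lemma \ref{lem:N-M} (whose proof lives in Step~2, before \eqref{eq:ell} is imposed); it is introduced in Step~3 precisely to run the estimates in Claim \ref{claim:key} and Sublemma \ref{sublem:simplex-wise}.
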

\psmall
The construction of the Lipschitz strong deformation retract{ion $\Phi^k$} in 
Lemma \ref{lem:induct} will be done simplex-wisely. 
{This is based on the following sublemma.}
\psmall
\begin{sublem} \label{sublem:simplex-wise}
{For each $k$-simplex $\sigma\in\ca N_{\ca U}$,
there exists a Lipschitz strong deformation retraction 
of $\sigma\times K(U_{\sigma})$ to $(\sigma\times U_{\sigma}\times 0)\bigcup \pa\sigma\times K(U_{\sigma})$.
}
\end{sublem}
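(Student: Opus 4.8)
The plan is to build the required Lipschitz strong deformation retraction of $\sigma\times K(U_\sigma)$ onto $A:=(\sigma\times U_\sigma\times 0)\cup(\partial\sigma\times K(U_\sigma))$ by first straightening the cone fibers using the provided Lipschitz contraction of $U_\sigma$, and then projecting the resulting ``prism over the base $U_\sigma$'' onto $A$ by a standard radial deformation of $\sigma\times[0,L]$ onto $(\sigma\times 0)\cup(\partial\sigma\times[0,L])$. More precisely, recall from the discussion preceding the sublemma that $\varphi:U_\sigma\times[0,1]\to U_\sigma$ is a Lipschitz contraction to a point $p_\sigma$. I would first define a Lipschitz homotopy on $\sigma\times K(U_\sigma)$ that, on the cone coordinate $[x,t]$, slides $x$ along $\varphi$ toward $p_\sigma$ while simultaneously moving $t$ toward $L$ at the ``same rate'', so that at the end of this first homotopy the whole of $\sigma\times K(U_\sigma)$ has been deformed into $\sigma\times\{v_M\}\cup(\text{something over }p_\sigma)$; the point of coupling the two coordinates is to keep the map continuous (indeed Lipschitz) at the cone vertex $v_M$, where the $x$-coordinate is undefined. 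The map should be arranged to fix $A$ pointwise: on $\sigma\times U_\sigma\times 0$ and on $\partial\sigma\times K(U_\sigma)$ nothing moves.

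The cleaner route, which I would actually adopt, is to treat the two factors separately after one preliminary reduction. Identify $K(U_\sigma)$ with $U_\sigma\times[0,L]$ modulo $U_\sigma\times\{L\}$. Step (a): a Lipschitz strong deformation retraction of $\sigma\times K(U_\sigma)$ onto $\sigma\times K(p_\sigma)\cup\partial\sigma\times K(U_\sigma)$, obtained by running $\varphi$ in the $U_\sigma$-coordinate but damping it to the identity as one approaches $\partial\sigma$ (multiply the time parameter fed into $\varphi$ by a Lipschitz function on $\sigma$ that vanishes on $\partial\sigma$, e.g.\ $1-\sum$ of the relevant barycentric coordinates, or simply $\operatorname{dist}(\cdot,\partial\sigma)$ suitably normalized); one must check this descends across the cone point, which it does because $\varphi$ fixes nothing problematic and the quotient only collapses $t=L$. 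Since $K(p_\sigma)$ is just a segment $[0,L]$, the space $\sigma\times K(p_\sigma)\cup\partial\sigma\times K(U_\sigma)$ already contains $A$. Step (b): retract $\sigma\times K(p_\sigma)=\sigma\times[0,L]$ onto $(\sigma\times 0)\cup(\partial\sigma\times[0,L])$ by the evident radial (``push away from an interior point'') deformation of the prism, again damped near $\partial\sigma$ so that it extends by the identity over $\partial\sigma\times K(U_\sigma)$ and glues with Step (a); this is an explicit piecewise-linear, hence Lipschitz, map. Concatenating (a) and (b) and fixing $A$ throughout gives the desired retraction.

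The main obstacle is verifying the Lipschitz property — in particular the uniform Lipschitz bound of the Step (a) homotopy near the cone vertex $v_M$ and near $\partial\sigma$, where the damping function degenerates. The cone metric on $K(M)$ written out in the excerpt shows that $|[x,u],[x',u']|^2\le (u-u')^2+(L-u)(L-u')|x,x'|^2$, so moving $x$ only contributes through the factor $(L-u)(L-u')$, which tends to $0$ precisely as one approaches the vertex; this is exactly what makes an a priori discontinuous-looking construction Lipschitz, and the estimate parallels the computation already carried out in the proof of Lemma \ref{lem:N-M}. I expect the bookkeeping to be routine once the damping function is fixed: one splits into the region bounded away from the vertex (where $\varphi$ being Lipschitz suffices directly) and a neighborhood of the vertex (where the $(L-u)(L-u')$ factor absorbs the Lipschitz constant of $\varphi$ together with the Lipschitz constant of the damping function), and similarly near $\partial\sigma$ the damping function being Lipschitz and vanishing there controls everything. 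Continuity of the matching between Steps (a) and (b) along $\sigma\times K(p_\sigma)$ is immediate since both are the identity on $\partial\sigma\times K(U_\sigma)$ at the relevant endpoints.
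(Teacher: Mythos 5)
The decomposition into your Steps (a) and (b) has a genuine flaw. You assert that after Step (a) the image lies in $\sigma\times K(p_\sigma)\cup\partial\sigma\times K(U_\sigma)$ and that this set ``already contains $A$''; that containment is false. The set $A=(\sigma\times U_\sigma\times 0)\cup(\partial\sigma\times K(U_\sigma))$ contains the entire floor $\sigma\times U_\sigma\times 0$, whereas $\sigma\times K(p_\sigma)$ only meets the floor in the slice $\sigma\times\{p_\sigma\}\times 0$. Consequently Step (a), which contracts the $U_\sigma$-coordinate toward $p_\sigma$ with a damping factor depending only on $x\in\sigma$, moves points $(x,[y,0])\in A$ with $x$ in the interior of $\sigma$ and $y\neq p_\sigma$. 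A \emph{strong} deformation retraction must fix $A$ pointwise throughout, so the concatenation of (a) and (b) does not produce what the sublemma requires. (There is a secondary issue that with a damping factor such as a normalized $\operatorname{dist}(\cdot,\partial\sigma)$, Step (a) in fact lands in neither $\sigma\times K(p_\sigma)\cup\partial\sigma\times K(U_\sigma)$ nor any set containing $A$, but the pointwise-fixing failure is already fatal.)

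The missing idea is that the contraction time fed to $\varphi$ must vanish not only on $\partial\sigma$ but also on $\sigma\times U_\sigma\times 0$, i.e.\ it must depend on the cone coordinate $t$. The paper achieves this by coupling the $\varphi$-contraction to the radial projection $r(x,t)=(\psi_0(x,t),u(x,t))$ of the prism $\sigma\times[0,L]$ onto $(\sigma\times 0)\cup(\partial\sigma\times[0,L])$: the contraction parameter is $t-u(x,t)$, which vanishes precisely when $(x,t)\in(\sigma\times 0)\cup(\partial\sigma\times[0,L])$, and the new $t$-coordinate is a Lipschitz interpolation $w(x,t)$ between $u(x,t)$ (for small $u$) and $t$ (for $u$ near $L$, to descend across the cone point). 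Thus the prism projection and the $\varphi$-contraction must run in parallel, not in sequence. To salvage your plan along its own lines you would need to replace the $x$-only damping in Step (a) by something like $s\cdot\mathrm{damp}_\sigma(x)\cdot\mathrm{damp}_0(t)$ with $\mathrm{damp}_0(0)=0$, and then rework Step (b) so it glues with the identity on all of $A$, at which point you are essentially reconstructing the coupled construction of the paper.
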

\psmall
{
Since $\pa\sigma\times K(U_{\sigma})\subset \ca M(p^{k-1})$, 
applying Sublemma \ref{sublem:simplex-wise} to each $k$-simplex
of $\ca N_{\ca U}$, we obtain Lemma \ref{lem:induct}.

By using Lemma \ref{lem:induct} {repeatedly}, 
we have a finite sequence of Lipschitz retractions:
\psmall
\beq
\begin{split}
 \ca M(p)= \ca M(p^{k_0}) \longrightarrow & \ca D(\ca U)\times 0\bigcup \ca M(p^{k_0-1}) 
 \longrightarrow \cdots \\
 & \longrightarrow D(\ca U)\times 0\bigcup \ca M(p^0) 
 \longrightarrow \ca D(\ca U)\times 0.
 \label{eq:sequence}
\end{split}
\eeq
\psmall\n
From \eqref{eq:sequence}, we conclude that $\ca D(\ca U)\times 0$ is a Lipschitz strong 
deformation retract of $\ca M(p)$. 
Thus all we have to do is to prove Sublemma \ref{sublem:simplex-wise}.

\psmall
\begin{rem} \upshape 
From \eqref{eq:sequence}, one might think that
$k_0=\dim \ca N_{\ca U}<\infty$ is essential in the argument below.
However, we can generalize the argument of this section to the general case of
$\dim \ca N_{\ca U}=\infty$.
This will be verified in Section \ref{sec:non-cpt}.
\end{rem}
}

\pmed
The following is the important first step in the proof of Sublemma 
\ref{sublem:simplex-wise}, which is the case of $k=0$.

\begin{claim} \label{claim:key}
Let $U$ be an element of $\ca U$.
Then there exists a Lipschitz strong deformation retraction
{
$K(U) \times [0,1] \to K(U)$
of $K(U)$ to $U\times 0$.}
\end{claim}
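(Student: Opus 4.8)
\textbf{Proof proposal for Claim \ref{claim:key}.}

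The plan is to build the deformation retraction of $K(U)$ onto $U\times 0$ in two stages, using the given Lipschitz strong deformation retraction of $U$ to its center $p=p_U$. Let $h:U\times[0,1]\to U$ be that retraction, so $h_0=1_U$, $h_1\equiv p$, $h_t(p)=p$, and recall from the hypotheses on good coverings that $\bar U$ is compact; in particular $U$ has bounded diameter and $h$ is genuinely Lipschitz on $U\times[0,1]$. Write a point of $K(U)=U\times[0,L]/U\times L$ as $[x,t]$, with vertex $v_U=[x,L]$. The first stage slides everything down the cone toward the apex while simultaneously applying $h$, so that the whole cone is pushed onto the single segment $\{[p,t] : t\in[0,L]\}$ together with the base; the second stage retracts that segment down to $[p,0]\in U\times 0$, keeping the base fixed.

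Concretely, I would first define $G:K(U)\times[0,1]\to K(U)$ by a formula of the shape $G([x,t],s)=[\,h_{\beta(t,s)}(x),\,t\,]$ for a suitable ``schedule'' function $\beta$; the subtlety is that on $U\times 0$ (where $t=0$) we must have $G=\mathrm{id}$ for all $s$, while away from the base we want $s=1$ to send $[x,t]$ into the segment over $p$. A clean choice is to let the amount of contraction depend on how far we are from the base, e.g. set $\beta(t,s)=s\cdot\min\{1,\ t\}$ — no, more robustly, reparametrize the cone coordinate so that the base corresponds to $t=0$ and use $\beta(t,s)=s\,t/L$ if one wants the contraction to vanish exactly at the base; but then at the apex $t=L$ we already get $h_s$, which is fine. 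Actually, since we only need a strong deformation retraction onto $U\times 0$ and not onto a point, the simplest correct approach is: do \emph{not} contract $U$ at all in the cone direction first. Instead, define $\Phi:K(U)\times[0,1]\to K(U)$ directly by
\[
\Phi([x,t],s)=\bigl[h_{\phi(t,s)}(x),\ (1-s)\,t\bigr],
\]
where $\phi:[0,L]\times[0,1]\to[0,1]$ is chosen to be $0$ when $t=0$ (so the base is fixed), to be $0$ when $s=0$ (so $\Phi_0=\mathrm{id}$), and to equal $1$ when $s=1$ and $t>0$, say $\phi(t,s)=s\cdot\min\{1,\ t/\eta\}$ for a fixed small $\eta>0$, or even just $\phi(t,s)=s$ after one checks the base is handled by the $t$-factor collapsing. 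At $s=1$ we get $\Phi([x,t],1)=[h_1(x),0]=[p,0]$ for $t>0$ and $=[x,0]$ for $t=0$ — so the image is $U\times 0$ as required, and on $U\times 0$ the map is the identity throughout.

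The work, then, is to verify that $\Phi$ is Lipschitz on $K(U)\times[0,1]$ (local Lipschitzness suffices, by compactness of $\bar U$). Using the cone metric recalled just above the statement,
\[
|[x,u],[x',u']|^2=(L-u)^2+(L-u')^2-2(L-u)(L-u')\cos(\min\{\pi,|x,x'|\}),
\]
one expands as in the proof of Lemma \ref{lem:N-M}: the radial difference is controlled by $|u-u'|\le |t-t'|+L|s-s'|$ with $u=(1-s)t$, and the angular term contributes a factor $(L-u)(L-u')\,|h_{\phi(t,s)}(x),h_{\phi(t',s')}(x')|^2$, which by the Lipschitz bound on $h$ is $\lesssim (L-u)(L-u')\bigl(|x,x'|^2+|\phi(t,s)-\phi(t',s')|^2\bigr)$. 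The key point making this match the metric downstairs is exactly that $(L-u)(L-u')$ appears with the angular factor in \emph{both} the source and target estimates, so the dangerous ``$x$ near the apex'' region is harmless — this is the same mechanism as in Lemma \ref{lem:N-M}. The only genuinely delicate point is the behavior of $\phi$ near $t=0$: one needs $\phi(\cdot,\cdot)$ Lipschitz and the product $h_{\phi(t,s)}(x)$ to vary Lipschitz-continuously even though $h$ itself is only controlled with a fixed constant; choosing $\phi(t,s)=s$ (independent of $t$) sidesteps this entirely, at the cost of $\Phi$ not being the identity on $U\times 0$ unless $t=0$ forces the first coordinate too — so I would instead keep $\phi(t,s)=s\min\{1,t/\eta\}$ and absorb the extra $1/\eta$ into the Lipschitz constant. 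I expect this base-point analysis to be the main obstacle; everything else is the cone-metric bookkeeping already rehearsed in Step 2.
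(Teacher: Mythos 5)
Your formula $\Phi([x,t],s)=[\,h_{\phi(t,s)}(x),\ (1-s)t\,]$ is not well-defined on $K(U)$: you have checked the base $U\times 0$, but the trouble is at the apex. At $t=L$ (with $L\ge\eta$) your schedule gives $\phi(L,s)=s$, so $\Phi([x,L],s)=[\,h_s(x),\ (1-s)L\,]$. For $0<s<1$ the cone coordinate $(1-s)L$ lies strictly between $0$ and $L$, i.e.\ the image point is \emph{not} the apex, while $h_s(x)$ genuinely depends on $x$ for $s<1$. Since all pairs $[x,L]$ are identified in $K(U)$, the map is not even defined on the cone, let alone Lipschitz. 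The same obstruction persists under any schedule with $\phi(L,0)=0$: for the image to be apex-independent you would need $h_{\phi(L,s)}\equiv p$ for every $s>0$, which together with $\phi(L,0)=0$ forces $\phi(L,\cdot)$ to jump. (Even ignoring well-definedness, your ``$(L-u)(L-u')$ matches source and target'' claim fails near the apex for $s$ close to $1$: the source factor $(L-t)(L-t')$ is small, but the target factor with $u=(1-s)t$ is of order $L^2$, so the angular term is no longer controlled by the source metric.)

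The fix, and the route the paper actually takes, is to decouple the two collapses and schedule them so the cone coordinate is not allowed to leave the apex level until the angular retraction has already reached $p$. Concretely, the paper reparametrizes the given retraction as $\varphi:U\times[0,L]\to U$, normalized so that $\varphi(\cdot,t)\equiv p$ once $t$ exceeds a fixed fraction of $L$, and sets
\[
\Phi([x,t],s)=\bigl[\varphi(x,st),\ g(t,s)\,t\bigr],
\]
with a Lipschitz cutoff $g$ satisfying $g(t,s)=1$ for $s\le 1/3$ and $g(t,1)=0$. Two features make this work where your formula does not. First, the argument $st$ of $\varphi$ is proportional to $t$, so near the apex even a small $s$ already pushes $\varphi$ to $p$, killing the angular dependence exactly where the cone factor is about to become large. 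Second, for $s\le 1/3$ the cone coordinate stays at its original value $t$; in particular, at $t=L$ the image remains the apex until $\varphi(\cdot,sL)$ is already constant. This was in fact the two-stage idea (``apply $h$ first with $t$ fixed, then descend the cone'') that you floated and then discarded in favour of the one-line formula; the discarded version is the one that can be made to work. Your ``base-point analysis'' worry about $t=0$ is a red herring — the base is automatically fine in both formulas because $\phi(0,s)=0$ (or $st=0$); the real work is at $t=L$.
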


\begin{proof}
{Let $\varphi:U \times [0,L] \to U $ be a Lipschitz strong deformation retraction to $p \in U$. 
We may assume that $\varphi(x,t) = p$ for all $t \ge L/2$ and $x \in U$.}
Define the retraction $r:K(U)\to U\times 0\subset K(U)$ by
\[
r([x,t]) := [\varphi(x,t), 0].
\]
First we show that $r$ is Lipschitz. 
Again we way assume that 
$[x,t]$ and $[x',t']$ are sufficiently close.
Note that
\begin{align*}
|r([x,t]), r([x',t'])| &\le |[\varphi(x,t),0],[\varphi(x',t),0]| +
|[\varphi(x',t),0],[\varphi(x',t'),0]| \\
&\le L|\varphi(x,t), \varphi(x',t)| +
L|\varphi(x',t), \varphi(x',t')| \\
&\le CL|x,x'| + CL|t-t'|.
\end{align*}
{From here on,}
we use the symbols $C, C_1,C_2,\ldots$ to 
denote some uniform positive constants.

If both $t$ and $t'$ are greater than $L/2$, then 
$\varphi(x,t)=\varphi(x',t')=p$. Therefore we may assume that 
$t,t'\le L/2$. Then we have 
\begin{align*}
|[x,t], [x',t']|^2 &\ge (t-t')^2 + (L-t)(L-t')|x,x'|^2/2 \\
&\ge (t-t')^2 + (L^2/8)|x,x'|^2.
\end{align*}
Combining the two inequalities, we have
\begin{align*}
|r([x,t]), r([x',t'])| &\le CL|x,x'| + C_1L|t-t'| \\
& \le C (1+L)|[x,t], [x',t']|
\end{align*}
Now let $g:[0,L]\times [0,1]\to [0,1]$ be a Lipschitz function such that 
\begin{itemize}
\item $g(t,s) =1$ on $[0,L]\times [0,1/3];$
\item $g(t,1)=0$ for all $0\le t\le L$,
\end{itemize}
and define $\Phi:K(U)\times [0,1]\to K(U)$ by
\[
\Phi([x,t],s) = [\varphi(x,st), g(t,s)t]
\]
Note that $\Phi([x,t],0)=[x,t]$, $\Phi([x,t],1)=r([x,t])$.

To show that $\Phi$ is Lipschitz, let $([x,t],s)$ and $([x',t'],s')$ be
elements of $K(U)\times [0,1]$ sufficiently close to each other.
By triangle inequalities, it suffices to show the following:
\begin{enumerate}
\item $|\Phi([x,t],s), \Phi([x',t],s)| \le C_1|[x,t],[x',t]|;$
\item $|\Phi([x,t],s), \Phi([x,t'],s)| \le C_2L|[x,t],[x,t']|;$
\item $|\Phi([x,t],s), \Phi([x,t],s')| \le C_3L(1+L)|s-s'|$.
\end{enumerate}

We show $(1)$:
\begin{align*}
|\Phi([x,t],s), \Phi([x',t],s)| &=|[\varphi(x,st), g(t,s)t],[\varphi(x',st), g(t,s)t]| \\
& \le |L-g(t,s)t||\varphi(x,st), \varphi(x',st)| \\
& \le |L-g(t,s)t| C |x,x'| \\
& \le |L-g(t,s)t| |x, x'|.
\end{align*}
If $s\le 1/3$, then
$|L-g(t,s)t| |x,x'| = (L-t)|x,x'| \le 2|[x,t],[x',t]|$.
If $s\ge 1/3$ and $t\ge L/2$, then $ts\ge L_0$, and therefore
$|\Phi([x,t],s), \Phi([x',t],s)|=0$.
If $s\ge 1/3$ and $t\le L/2$, then
$|L-g(t,s)t| |x,x'|\le L|x,x'|\le 2(L-t)|x,x'| \le 3|[x,t],[x',t]|$.

We show $(2)$:
\begin{align*}
|\Phi([x,t],s), \Phi([x,t'],s)|^2 &=|[\varphi(x,st), g(t,s)t],[\varphi(x,st'), g(t',s)t']|^2 \\
& \le |g(t,s)t-g(t',s)t'|^2 + L^2 |\varphi(x,st), \varphi(x,st')|^2, \\
& \le |g(t,s)t-g(t',s)t'|^2 + L^2 C_1|st-st'|^2,
\end{align*}
where obviously
\begin{align*}
|g(t,s)t-g(t',s)t'| & \le |g(t,s)t-g(t',s)t| + |g(t',s)t-g(t',s)t'| \\
& \le C(1+L)|t-t'|.
\end{align*} 
Thus we have $ |\Phi([x,t],s), \Phi([x,t'],s)|\le C_2(1+L)|[x,t],[x,t']|$.

We show $(3)$:
\begin{equation}
\begin{aligned}
|\Phi([x,t],s), \Phi([x,t],s')|^2 &=|[\varphi(x,st), g(t,s)t],[\varphi(x,s't), g(t,s')t]|^2 \\
& \le |g(t,s)t-g(t,s')t|^2 + C_1 L^2 |st-s't|^2 \\
& \le C_2L^2|s-s'|^2 + C_3L^4|s-s'|^2\\
& \le C_4L^2(1+L^2)|s-s'|^2. 
\end{aligned}
\label{eq:Ps}
\end{equation}
This shows that $\Phi$ is Lipschitz, together with \eqref{eq:Ps}
this completes the proof of Claim \ref{claim:key}.
\end{proof}

Next we consider the general case.

\begin{proof}[Proof of Sublemma \ref{sublem:simplex-wise}]
Let $\sigma$ be any simplex of $\ca N$. 
Note that $\sigma\times 0\cup\pa\sigma\times [0,L]$ is a 
Lipschitz strong deformation retract of $\sigma\times [0,L]$.
Let $r:\sigma\times [0,L]\to \sigma\times 0\cup\pa\sigma\times [0,L]$
be a Lipschitz strong deformation retraction defined by the radial projection from the point
$(x^*,2L)\in\sigma\times\mathbb R$, where $x^*$ is the barycenter of $\sigma$. 
Let us {represent $r$ as}
\[
r(x,t) =(\psi_0(x,t), u(x,t))\in \sigma\times 0\cup\pa\sigma\times [0,L]\subset\sigma\times [0,L].
\]
Define the retraction $f:\sigma\times K(U) \to \sigma\times U\times 0\cup\pa\sigma\times K(U)$
by
\[
f(x,[y,t])=(\psi_0(x,t), [\varphi(y,t-u(x,t)), w(x,t)]),
\]
where $w:\sigma\times [0,L]\to [0,L]$ is defined as follows:
Let us consider the following closed subsets of $\mathbb R^{N+1}$:
\begin{align*}
& \Omega_0=\{(x,t)\in \sigma\times [0,L]\,|\,u(x,t)\le L/10\}, \\
& \Omega_1=\{(x,t)\in \sigma\times [0,L]\,|\,u(x,t)\ge L/2\}.
\end{align*}
Note that $|\Omega_0,\Omega_1|\ge c>0$ for some constant $c>0$.
Let $s_i(x,t)=|(x,t), \Omega_i|$, $i=1,,2$, and define $w$ by 
\[
w(x,t) := \frac{s_1(x,t)}{s_0(x,t)+s_1(x,t)} u(x,t) + \frac{s_0(x,t)}{s_0(x,t)+s_1(x,t)} t.
\]
Note that $w$ is Lipschitz and has the property 

\[
w(x,t) = \begin{cases}
u(x,t)\,\, &{\rm if} \,\,u(x,t)\le L/10 \\
t \,\, &{\rm if} \,\, u(x,t)\ge L/2.
\end{cases}
\]
Note also that $f$ is the identity on $ \sigma\times U\times 0\cup\pa\sigma\times K(U)$,
and therefore it defines a retraction of $ \sigma\times U\times 0\cup\pa\sigma\times K(U)$.
We show that $f$ is Lipschitz. It suffices to show that the second component 
\[
f_2(x,[y,t])=([\varphi(y,t-u(x,t)), w(x,t)])
\]
of $f$ is Lipschitz.
As before, we may assume that $(x, [y,t])$ and $(x',[y',t'])$ are sufficiently close to each other.
Letting $u=u(x,t)$, $u'=u(x',t)$, $w=w(x,t)$, $w'=w(x',t)$ we have 
\begin{align*}
|f_2(x, &[y,t]), f_2(x',[y,t])|^2 \\
&=|[\varphi(y,t-u), w], [\varphi(y,t-u'),w']|^2 \\
& \le (L-w)^2 + (L-w')^2 -2(L-w)(L-w')\cos |\varphi(y,t-u), \varphi(y,t-u')| \\
& \le (w-w')^2 +(L-w)(L-w')|\varphi(y,t-u), \varphi(y,t-u')|^2 \\
& \le (w-w')^2 + C_1L^2(u-u')^2\\
& \le C_2(1+L^2)|x,x'|^2,
\end{align*}
and
\begin{align*}
|f_2(x,[y,t]), f_2(x,[y',t])| &=|[\varphi(y,t-u), w], [\varphi(y',t-u),w]| \\
& \le (L-w)|\varphi(y,t-u), \varphi(y',t-u)|,
\end{align*}
where since $|[y,t],[y',t]|\ge (1/2)(L-t)|y,y'|$, we may assume that $t\ge 9L/10$.
If $t\ge 9L/10$ and $u(x,t)\le L/2$, then $\varphi(\cdot, t-u)=p$.
If $t\ge 9L/10$ and $u(x,t) > L/2$, then $w(x,t)=t$, and we have 
\begin{align*}
|f_2(x,[y,t]), f_2(x',[y,t])| &\le (L-t) C|y,y'| \\
& \le C|[y,t], [y',t]|.
\end{align*}
Finally letting $u=u(x,t)$, $u'=u(x,t')$, $w=w(x,t)$, $w'=w(x,t')$ we have 
\begin{align*}
|f_2(x,[y,t]), f_2(x,[y,t'])|^2 &=|[\varphi(y,t-u), w], [\varphi(y,t-u'),w']|^2 \\
& \le (w-w')^2 + L^2|\varphi(y,t-u), \varphi(y,t-u')|^2 \\
& \le (w-w')^2 + C_1L^2(u-u')^2 \\
& \le C_2(1+L^2)|t-t'|^2.
\end{align*}
Thus $f$ is Lipschitz.

Now define the homotopy $\Phi:\sigma\times K(U)\times[0,1]\to \sigma\times K(U)$ by
\begin{align*}
&\Phi(x, [y,t],s) \\
& = ((1-s)x+s\psi_0(x,t), [\varphi(y,\mu(s)(t-u(x,t)),(1-\nu(s))t+\nu(s)w(x,t)]),
\end{align*}
where $\mu$ and $\nu$ are Lipschitz functions on $[0,1]$ satisfying
\[
\mu(s) = \begin{cases}
1 \,\, & {\rm if} \,\,s \ge 2/3 \\
0 \,\, & {\rm if} \,\, s\le 1/2,
\end{cases}
\,\, \nu(s) = \begin{cases}
1 \,\, &{\rm if} \,\,s \ge 3/4 \\
0 \,\, &{\rm if} \,\, s\le2/3,
\end{cases}
\]
Obviously, $\Phi(\cdot, 0)=1_{\sigma\times K(U)}$, 
$\Phi(\cdot, 1)=f$ and $\Phi(\cdot,s)$ fixes each point of 
$\sigma\times U\times 0 \cup\pa\sigma\times K(U)$.
We show that $\Phi$ is Lipschitz. 
It suffices to show that the second component 
\[
\Phi_2(x,[y,t],s)=([\varphi(y,\mu(s)(t-u(x,t)),(1-\nu(s))t+\nu(s)u(x,t)])
\]
of $\Phi$ is Lipschitz.
As before, we may assume that $(x, [y,t],s)$ and $(x',[y',t'],s')$ are sufficiently close to each other.
Letting $u=u(x,t)$, $u'=u(x',t)$, $w=w(x,t)$, $w'=w(x',t)$ and $\mu=\mu(s)$, $\nu=\nu(s)$, we have 
\begin{align*}
|\Phi_2(x, & [y,t],s), \Phi_2(x',[y,t],s)|^2 \\
&=|[\varphi(y,\mu(t-u)), (1-\nu)t +\nu w], [\varphi(y,\mu(t-u')), (1-\nu)t +\nu w']|^2 \\
& \le \nu^2(w-w')^2 +L^2|\varphi(y,\mu(t-u)), \varphi(y,\mu(t-u')|^2 \\
& \le\nu^2(w-w')^2 + C_1L^2\mu^2(u-u')^2\\
& \le C_2(1+L^2)|x,x'|^2,
\end{align*}
and
\begin{align*}
|\Phi_2(x, & [y,t],s), \Phi_2(x,[y',t],s)| \\
&=|[\varphi(y,\mu(t-u)), (1-\nu)t +\nu w], [\varphi(y',\mu(t-u)), (1-\nu)t +\nu w]| \\
& \le (L-(1-\nu)t -\nu w)|\varphi(y,\mu(t-u), \varphi(y',\mu(t-u)| \\
& \le (L-(1-\nu)t )C|y,y'|, 
\end{align*}
where, if $t<9L/10$, then $L|y,y'|\le CL|[y,t], [y',t]|$. Hence 
we may assume that $t\ge 9L/10$.
If $u(x,t)>L/2$ then $w(x,t)=t$. If $u(x,t)\le L/2$ and $s\ge 2/3$, then $\mu(s)=1$ and $\varphi(\cdot, \mu(t-u))=p$.
If $u(x,t)\le L/2$ and $s\le 2/3$, then $\nu=0$. Thus we conclude that 
\begin{align*}
|\Phi_2(x,[y,t],s), \Phi_2(x,[y',t],s)| &\le (L-t) C|y,y'| \\
& \le C|[y,t], [y',t]|.
\end{align*}
Next letting $u=u(x,t)$, $u'=u(x,t')$, $w=w(x,t)$, $w'=w(x,t')$, we have 
\begin{align*}
|\Phi_2(x, & [y,t],s),\Phi_2(x,[y,t'],s)|^2 \\
&=|[\varphi(y,\mu(t-u)), (1-\nu)t +\nu w], [\varphi(y,\mu(t'-u')), (1-\nu)t' +\nu w']|^2 \\
& \le ((1-\nu)(t-t') + \nu(w-w'))^2 + L^2 |\varphi(y,\mu(t-u), \varphi(y, \mu(t'-u'))| \\
& \le C(1+L^2)(t-t')^2.
\end{align*}
Finally letting $\mu'=\mu(s')$, $\nu'=\nu(s')$, we have 
\begin{align*}
|\Phi_2(x, &[y,t],s), \Phi_2(x,[y,t],s')|^2 \\
&=|[\varphi(y,\mu(t-u)), (1-\nu)t +\nu w], [\varphi(y,\mu'(t-u)), (1-\nu')t +\nu' w]|^2 \\
& \le (t(\nu'-\nu)+w(\nu-\nu'))^2 + L^2 |\varphi(y,\mu(t-u), \varphi(y, \mu'(t-u))|^2 \\
& \le L^2(\nu-\nu')^2 + C_1L^2(\mu-\mu')^2 \\
& \le C_2L^2(s-s')^2.
\end{align*}
Thus $\Phi$ is Lipschitz.
This completes the proof of Sublemma \ref{sublem:simplex-wise}.
\end{proof}

This completes the proof of Proposition \ref{prop:D-M}.
We have just proved the compact case of
Theorem \ref{thm:lip-nerve}.

{
By the above discussion, we have the following commutative diagram:
\begin{equation*} \label{eq:diagram2}
\begin{CD}
 M @>{\tau} >> \ca D(\ca U) \\
 @V{\Theta}VV
 @VV{\iota}V \\
|\ca N_{\ca U}| @<<{\Psi'} < \ca M(p)
\end{CD}
\end{equation*}

From Lemmas \ref{lem:section}, \ref{lem:N-M} and Proposition \ref{prop:D-M}
together with \eqref{eq:diagram}, we have the following.
}
%

{
\begin{cor} \label{cor:homo-inv1}
Let $M, \mathcal U, \{\xi_j\}_{j \in J}$ be the same as in this section. 
Then {the} natural map
\[
\Theta : M \ni x \mapsto (\xi_j(x))_{j \in J} \in |\ca N_{\ca U}|
\]
is a Lipschitz homotopy equivalence. 
\end{cor}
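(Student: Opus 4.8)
The plan is to read off Corollary \ref{cor:homo-inv1} directly from the three structural results just established, by chasing the commutative diagram \eqref{eq:diagram}. The point is that $\Theta$ is precisely the composite $\Psi' \circ \iota \circ \tau$, and each of the three maps $\tau$, $\iota$, $\Psi'$ has already been shown to be (one side of) a Lipschitz homotopy equivalence.

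First I would unwind the definitions to verify the factorization. By Lemma \ref{lem:section}, $\tau(x) = (\Theta(x), x)$ with $\Theta(x)(v_j) = \xi_j(x)$, and $\tau : M \to \ca D(\ca U)$ is a bi-Lipschitz embedding onto a Lipschitz strong deformation retract (Corollary \ref{cor:M-D}), hence a Lipschitz homotopy equivalence with homotopy inverse the projection $q'$. Next, $\iota : \ca D(\ca U) \to \ca M(p)$ sends $(\theta,x) \mapsto (\theta,x,0)$; by Proposition \ref{prop:D-M}, $\iota(\ca D(\ca U)) = \ca D(\ca U) \times 0$ is a Lipschitz strong deformation retract of $\ca M(p)$ via $\Phi$, so $\iota$ is a Lipschitz homotopy equivalence. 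Finally, Lemma \ref{lem:N-M} shows $\Psi : |\ca N_{\ca U}| \to \ca M(p)$ is an isometric embedding onto a Lipschitz strong deformation retract, with $\Psi'$ a $1$-Lipschitz homotopy inverse satisfying $\Psi' \circ \Psi = 1_{|\ca N_{\ca U}|}$. Composing, $\Psi' \circ \iota \circ \tau : M \to |\ca N_{\ca U}|$ sends $x \mapsto \Psi'(\Theta(x), x, 0) = \Theta(x)$, so indeed $\Theta = \Psi' \circ \iota \circ \tau$.

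Then I would conclude: a composite of Lipschitz homotopy equivalences is a Lipschitz homotopy equivalence (the explicit homotopy inverse is $q' \circ (\text{the retraction } \Phi_1|_{\ca M(p)\to \ca D(\ca U)\times 0} \text{ read off from } \Phi) \circ \Psi$, and one checks the two composites are Lipschitz homotopic to the identities by pasting the Lipschitz homotopies $H$, $F$, $\Phi$ supplied above — pasting finitely many Lipschitz homotopies along $[0,1]$ yields a Lipschitz homotopy after reparametrization). Since $\tau$, $\iota$ are genuine deformation-retract inclusions and $\Psi'$ is a retraction with $\Psi' \circ \Psi = \mathrm{id}$, no subtlety beyond this pasting is needed. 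Therefore $\Theta$ is a Lipschitz homotopy equivalence.

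The only mildly delicate point — and the one I would be careful to state explicitly — is that composing and pasting Lipschitz homotopies stays within the Lipschitz category: this is immediate because a finite concatenation of Lipschitz maps $[0,1]\to \mathrm{Map}$ reparametrized affinely on $[0,1]$ is again Lipschitz, and composition of Lipschitz maps is Lipschitz. There is no genuine obstacle here; the corollary is essentially a bookkeeping consequence of the diagram \eqref{eq:diagram} together with Corollary \ref{cor:M-D}, Lemma \ref{lem:N-M}, and Proposition \ref{prop:D-M}, which is exactly why it is recorded as a corollary.
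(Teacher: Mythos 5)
Your proposal is correct and matches the paper's own argument: the paper likewise records the corollary as an immediate consequence of Lemma \ref{lem:section}, Lemma \ref{lem:N-M}, Proposition \ref{prop:D-M}, and the commutative diagram identifying $\Theta$ with $\Psi'\circ\iota\circ\tau$, with the homotopy inverse read off by chasing the diagram in the other direction. The only cosmetic remark is that the inverse is more cleanly written as $q\circ\Phi_1\circ\Psi$ (using the full projection $q$ on $\ca D(\ca U)$ rather than its restriction $q'$, since $\Phi_1$ lands in $\ca D(\ca U)\times 0$ rather than $\tau(M)$), but this is precisely what your ``$q'\circ H_1$'' bookkeeping achieves, so there is no gap.
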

}
\psmall
 {
\begin{cor} \label{cor:homo-inv2}
Let $M$, $\mathcal U=\{U_j\}_{j=1}^N$ and $\ca N_{\ca U}$ be the same as in this section,
and $\zeta:|\ca N_{\ca U}|\to M$ a Lipschitz homotopy inverse to 
$\Theta : M \to |\ca N_{\ca U}|$. For every $\theta\in |\ca N_{\ca U}|$,
let $\sigma$ be the open simplex of $\ca N_{\ca U}$ containing $\theta$
with $\sigma=\langle U_{j_0}, \ldots, U_{j_k}\rangle$.
Then we have 
\beq
 \zeta(\theta) \in \bigcup_{i=0}^k \,U_{j_i}. \label{eq:zeta}
\eeq
\end{cor}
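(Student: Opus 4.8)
The plan is to extract the localization property of $\zeta$ directly from the construction, rather than trying to track an arbitrary Lipschitz homotopy inverse. First I would observe that the specific homotopy inverse produced by the proof of the compact case is the composition $\zeta = q \circ \iota^{-1} \circ \Psi$ followed by the retraction data, where $\Psi:|\ca N_{\ca U}|\to\ca M(p)$ is the isometric embedding $\Psi(\theta)=[\theta]$, the retraction $\Phi$ of $\ca M(p)$ onto $\ca D(\ca U)\times 0$ carries $[\theta]$ to some point $(\theta', x)\in\ca D(\ca U)$, and then $q$ projects to $x\in M$. So the first step is to make precise that any Lipschitz homotopy inverse $\zeta$ arising from this construction factors as $M\xleftarrow{q}\ca D(\ca U)\xleftarrow{\rho}\ca M(p)\xleftarrow{\Psi}|\ca N_{\ca U}|$, where $\rho=\Phi(\cdot,1):\ca M(p)\to\ca D(\ca U)\times 0$.

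The key geometric point is that the retraction $\rho$ is \emph{simplex-preserving} in the $|\ca N_{\ca U}|$-factor: the retractions of Lemma \ref{lem:induct} and Sublemma \ref{sublem:simplex-wise} were built so that $\sigma\times K(U_\sigma)$ is carried into itself (the $\sigma$-component of $\Phi$ only ever takes convex combinations $(1-s)x + s\psi_0(x,t)$ with $\psi_0(x,t)\in\sigma$, so it stays inside $\sigma$, in fact inside the closed simplex). Hence for $\theta$ in the open simplex $\sigma=\langle U_{j_0},\dots,U_{j_k}\rangle$, we have $\Psi(\theta)=[\theta]\in\sigma\times K(U_\sigma)$, and therefore $\rho(\Psi(\theta))\in\sigma\times U_\sigma\times 0$, meaning $\rho(\Psi(\theta))=(\theta'',x)$ with $\theta''\in\bar\sigma$ and, crucially, $x\in U_\sigma = \bigcap_{i=0}^k U_{j_i}$. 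Since $\zeta(\theta)=q(\rho(\Psi(\theta)))=x$, this gives the even stronger conclusion $\zeta(\theta)\in\bigcap_{i=0}^k U_{j_i}\subset\bigcup_{i=0}^k U_{j_i}$, which of course implies \eqref{eq:zeta}.

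The second step is to handle the ambiguity in the phrase ``a Lipschitz homotopy inverse'': the statement as written refers to \emph{any} homotopy inverse $\zeta$, not just the one constructed. To cover this I would argue that the conclusion \eqref{eq:zeta} depends only on the homotopy class of $\zeta$ in a weak sense — more precisely, I would either (a) restrict attention, as seems to be the intent in the subsequent applications, to the canonical $\zeta$ built above, or (b) note that if $\zeta'$ is any other Lipschitz homotopy inverse then $\zeta'$ is Lipschitz homotopic to the canonical $\zeta$ through maps $|\ca N_{\ca U}|\to M$, and since $\ca U$ is an open cover the condition ``$\zeta(\theta)\in\bigcup_{i} U_{j_i}$ for $\theta\in\sigma$'' is an open condition stable under small perturbation; but this stability is not automatic for a long homotopy, so the cleanest route is (a). I expect the main obstacle to be precisely this bookkeeping: verifying rigorously that every retraction in the chain \eqref{eq:sequence} preserves the simplicial stratification of the $|\ca N_{\ca U}|$-factor, so that tracing $[\theta]$ through all of $\Phi^{k_0},\dots,\Phi^0$ keeps it inside $\bar\sigma\times K(U_\sigma)$ and lands it in $\bar\sigma\times U_\sigma\times 0$; the Lipschitz estimates are irrelevant here, only the set-theoretic image-tracking matters, and one must check the $w$-function and the $\psi_0$-component in Sublemma \ref{sublem:simplex-wise} never push a point out of $\sigma\times K(U_\sigma)$.
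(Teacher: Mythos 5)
Your approach — tracing $\Psi(\theta)$ through the retraction $\Phi_1 = \Phi(\cdot,1)$ and reading off the $M$-component — is exactly the paper's approach. But the intermediate claim that $\rho(\Psi(\theta)) \in \sigma\times U_\sigma\times 0$ (hence $\zeta(\theta)\in U_\sigma = \bigcap_{i=0}^k U_{j_i}$) is \emph{false} in general, and you flag precisely the place where it breaks but don't follow through: the retraction of Sublemma~\ref{sublem:simplex-wise} retracts $\sigma\times K(U_\sigma)$ onto $\sigma\times U_\sigma\times 0\ \cup\ \pa\sigma\times K(U_\sigma)$, not onto $\sigma\times U_\sigma\times 0$ alone. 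For $\theta$ near $\pa\sigma$, the radial projection $r$ from $(x^*,2L)$ sends $(\theta,L)$ into $\pa\sigma\times(0,L]$, so $\psi_0(\theta,L)\in\pa\sigma$. Once the $\ca N_{\ca U}$-component drops into a proper face $\sigma'<\sigma$, the next retraction in the chain \eqref{eq:sequence} acts inside $\sigma'\times K(U_{\sigma'})$ with $U_{\sigma'}\supsetneq U_\sigma$, and its $M$-component uses the contraction $\varphi$ of $U_{\sigma'}$ toward the center $p_{\sigma'}$ — a point that need not lie in $U_\sigma$. So the $M$-component can and does leave $U_\sigma$. What you \emph{can} conclude, and what the paper concludes, is only that $\rho(\Psi(\theta))\in\sigma_1\times U_{\sigma_1}\times 0$ for some face $\sigma_1\leq\sigma$; then $\zeta(\theta)\in U_{\sigma_1}$, and since any face $\sigma_1$ of $\sigma$ has $U_{\sigma_1}\subset U_{j_i}$ for some $i\in\{0,\dots,k\}$, this yields the \emph{union} $\bigcup_{i=0}^k U_{j_i}$, not the intersection. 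The corollary's statement is a union for this reason; your ``stronger'' conclusion would actually be wrong.

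On your second concern (the ambiguity of ``a Lipschitz homotopy inverse''), you are right that the statement only makes sense for the canonical $\zeta = \tau^{-1}\circ H_1\circ\Phi_1\circ\Psi$ built from the construction; option (a) is what the paper intends, and no stability-under-homotopy argument is attempted or needed.
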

\begin{proof}
Let $H:\ca D(\ca U)\times [0,1] \to \ca D(\ca U)$ 
be a Lipschitz strong deformation retraction of $\ca D(\ca U)$
to $\tau(M)$ given in \eqref{eq:tau},
and set $H_1:=H(\,\cdot\,, 1)$.
Let $\Phi:\ca M(p)\times [0,1] \to \ca M(p)$ be a 
Lipschitz strong deformation retraction of $\ca M(p)$ to 
$\ca D(\ca U)\times 0$ given in Proposition \ref{prop:D-M}, 
and set $\Phi_1:=\Phi(\,\cdot\,, 1)$.
From our argument in this section, we have the following commutative 
diagram:
\begin{equation*} \label{eq:diagram2}
\begin{CD}
 M @<{\tau^{-1}\circ H_1} << \ca D(\ca U) \\
 @A{\zeta}AA
 @AA{\Phi_1}A \\
|\ca N_{\ca U}| @>>{\Psi} > \ca M(p)
\end{CD}
\end{equation*}
Note that $\tau^{-1}\circ H_1(\mu,x)=x$ for every $(\mu,x)\in\ca D(\ca U)$.
Therefore, 
we can write 
\[
 \Phi_1\circ \Psi(\theta)=(\eta(\theta),\zeta(\theta))\in \ca D(\ca U),
\]
where $\eta(\theta)\in |\ca N_{\ca U}|$ and $\zeta(\theta)\in M$.
If $\sigma_1$ denotes the open simplex of $\ca N_{\ca U}$ containing
$\eta(\theta)$, then it follows from the definition of $\ca D(\ca U)$
that $\zeta(\theta)\in U_{\sigma_1}$.
From Sublemma \ref{sublem:simplex-wise} together with \eqref{eq:sequence},
we see that $\sigma_1$ is a face of $\sigma$, which yields \eqref{eq:zeta}.
\end{proof}

}

\section{Noncompact case} \label{sec:non-cpt}
{We prove Theorem \ref{thm:lip-nerve} for} {the general case}.
Let $M$ be a {$\sigma$-compact} metric space admitting a good covering 
$\mathcal U=\{U_j\}_{j\in J}$. {From the local finiteness of $\mathcal U$, 
$J$ is countable, and therefore we may assume $J=\mathbb N$.}
%
{
Since $\ca U$ is locally finite, the number of $U_j$'s meeting 
each $\bar U_i$ is finite. It follows that the nerve $\ca N_{\ca U}$ is locally finite.
Note that $|\ca N_{\ca U}|\subset \mathbb R^{\infty}$ in this case.
Note that the Lipschitz constant 
of the strong deformation retraction $U_j\times [0,1]\to U_j\times [0,1]$ of $U_j$ 
to a point of $U_j$ depends on $j$, and that 
$\dim \ca N_{\ca U}=\infty$ in general.
From the local finiteness of $\ca N_{\ca U}$, basically we can do the same 
construction as in 
Section \ref{sec:Lip-homotopy} to obtain the spaces $\ca D(\ca U)$, $\ca M(p)$
in the general case, too.
We also have the natural embeddings in a similar manner: 
\begin{equation*}
\begin{CD}
 M @>{\tau} >> \ca D(\ca U) \\
 @.
 @VV{\iota}V \\
|\ca N_{\ca U}| @>>{\Psi}> \ca M(p)
\end{CD}
\end{equation*}
Note that the map $\tau:M\to \ca D(\ca U)$ defined by 
\[
\tau(x) = (\Theta(x), x),\quad \Theta(x)(v_j) = \xi_j(x) \quad (j\in J)
\]
is locally bi-Lipschitz.
In a way similar to Corollary \ref{cor:M-D}, we see that 
$\tau(M)$ has the same locally Lipschitz homotopy type as $\ca D(\ca U)$. 
Note also that 
the natural embedding
${\Psi}: |\ca N_{\ca U}| \to \ca M(p)$ defined by 
\[
{\Psi(\theta)=(\theta, [x,L]) =[\theta]}
\]
is isometric, and we see that 
$|\ca N_{\ca U}|$ is a locally Lipschitz strong deformation retract of $\ca M(p)$
in a way similar to Lemma \ref{lem:N-M}.
Therefore to complete the proof of Theorem \ref{thm:lip-nerve} in the general case,
we only have to check the following:

\begin{lem} 
There exists a locally Lipschitz strong deformation retraction of
$\ca M(p)$ to $\ca D(\ca U)\times 0$.
\end{lem}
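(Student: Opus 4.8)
The plan is to construct the retraction skeleton-by-skeleton, exactly as in the compact case, but controlling the process so that the composition of infinitely many partial retractions converges to a \emph{locally} Lipschitz map. First I would fix a locally finite exhaustion of $M$: since $\ca U$ is locally finite and $M$ is $\sigma$-compact, for each $i\in\mathbb N$ only finitely many $U_j$ meet $\bar U_i$, so the subcomplexes $\ca N_i$ spanned by $\{U_j : j\le i\}$ exhaust $\ca N_{\ca U}$, and each $\ca N_i$ is a \emph{finite} complex of finite dimension. I would then repeat verbatim the Step~3 argument of the compact case on each $\ca N_i$: Sublemma~\ref{sublem:simplex-wise} produces, for each simplex $\sigma$, a Lipschitz strong deformation retraction of $\sigma\times K(U_\sigma)$ onto $(\sigma\times U_\sigma\times 0)\cup(\pa\sigma\times K(U_\sigma))$, with Lipschitz constant depending on the Lipschitz constant of the contraction $\varphi_\sigma$ of $U_\sigma$ and on $L$; gluing these over all $k$-simplices of $\ca N_i$ gives $\Phi^k_i$ as in Lemma~\ref{lem:induct}, and composing over $k=\dim\ca N_i$ down to $0$ gives a Lipschitz strong deformation retraction $\Phi_i$ of $\ca M(p^{\ca N_i})$ onto $\ca D^{\ca N_i}\times 0$.

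Next I would patch these finite-stage retractions into a single deformation of all of $\ca M(p)$. The key point is that each $\Phi^k$ constructed simplex-wise fixes the part of $\ca M(p)$ lying over $\pa\sigma$; hence $\Phi^k$ restricted to the region lying over any \emph{fixed} simplex not yet touched is the identity. Consequently I can perform the retractions over the simplices of $\ca N_{\ca U}$ in a locally finite order (say, process all simplices contained in $|\ca N_i|$ during the time interval $[1-2^{-i+1},\,1-2^{-i}]$), and because $\ca N_{\ca U}$ is locally finite each point of $\ca M(p)$ lies in only finitely many of the sets $\sigma\times K(U_\sigma)$ and so is moved during only finitely many of these time intervals. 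Thus on any compact subset $\ca K$ of $\ca M(p)$ — which, by local finiteness, meets only finitely many $\sigma\times K(U_\sigma)$ and hence lies in some $\ca M(p^{\ca N_i})$ after finitely many stages — the homotopy $\Phi$ agrees with a finite composition of Lipschitz maps, so it is Lipschitz on $\ca K$. Reparametrizing time to $[0,1]$ (using a Lipschitz homeomorphism $[0,1)\to[0,\infty)$ that is \emph{linear} on each subinterval so that no blow-up of constants occurs, and setting $\Phi(\cdot,1)$ to be the limiting retraction, which is well-defined and locally Lipschitz by the above) yields the desired $\Phi:\ca M(p)\times[0,1]\to\ca M(p)$.

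I would then verify the three defining properties: $\Phi(\cdot,0)=1_{\ca M(p)}$, $\Phi(\cdot,1)$ maps into $\ca D(\ca U)\times 0$ (because every point eventually gets retracted into the $0$-level over its carrier simplex, and later stages fix the $0$-level), and $\Phi(z,t)=z$ for all $z\in\ca D(\ca U)\times 0$ and $t$, since each simplex-wise retraction fixes the $0$-level; finally $\Phi$ is locally Lipschitz by the compact-exhaustion argument above.

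\textbf{Main obstacle.} The hard part will not be any single estimate — those are copied from Sublemma~\ref{sublem:simplex-wise} — but rather the bookkeeping that guarantees \emph{local} Lipschitz control of the infinite composition: one must choose the time-ordering of the simplex-wise retractions, and the reparametrization of $[0,1)$, so that (i) every point is moved in only finitely many stages, (ii) the limit map at $t=1$ is locally Lipschitz, and (iii) the composition is jointly locally Lipschitz in $(z,t)$ across the gluing seams and at $t=1$. Local finiteness of $\ca N_{\ca U}$, together with the crucial ``fixes-the-boundary'' property of the simplex-wise retractions, is exactly what makes this possible; making that precise is the real content of the lemma.
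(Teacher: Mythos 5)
Your high-level strategy—exploit the local finiteness of $\ca N_{\ca U}$ so that, on any compact subset of $\ca M(p)$, only finitely many of the simplex-wise retractions act nontrivially, and then reparametrize time so the infinite composition converges—is in the same spirit as the paper's argument. The paper organizes the same idea differently: for each point it looks at the finite (closed) star $L$ of the carrier simplex $\sigma_\theta$, applies the finite-dimensional argument of Lemma~\ref{lem:induct} over $L$, and asserts that the resulting local retractions $\Phi_L$ patch together and give a locally Lipschitz deformation.

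However, the specific ordering you propose has a genuine gap. You build, for each finite full subcomplex $\ca N_i$, the strong deformation retraction $\Phi_i$ of $\ca M(p^{\ca N_i})$ onto $\ca D^{\ca N_i}\times 0$, and you then want to run $\Phi_i$ during the time interval $[1-2^{-i+1},\,1-2^{-i}]$, extending it by the identity on $\ca M(p)\setminus\ca M(p^{\ca N_i})$. But $\ca M(p^{\ca N_i})$ is a \emph{closed}, not open, subset of $\ca M(p)$: if $\tau\in\ca N_i$ has a coface $\sigma\supsetneq\tau$ with $\sigma\notin\ca N_i$, then a point $(\theta,[x,t])$ with $\theta\in\mathrm{int}(\tau)$ and $0<t<L$ is the limit of points $(\theta_n,[x_n,t_n])$ with $\theta_n\in\mathrm{int}(\sigma)$ that lie outside $\ca M(p^{\ca N_i})$. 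The map $\Phi_i$ moves $(\theta,[x,t])$ (it retracts $\tau\times K(U_\tau)$ toward the $0$-level), while the identity fixes $(\theta_n,[x_n,t_n])$; hence the extension is not continuous, let alone locally Lipschitz. The ``fixes the part lying over $\pa\sigma$'' observation you invoke refers to each individual retraction of $\sigma\times K(U_\sigma)$ (Sublemma~\ref{sublem:simplex-wise}) fixing $\pa\sigma\times K(U_\sigma)$; it does \emph{not} imply that $\Phi_i$ fixes a topological boundary of $\ca M(p^{\ca N_i})$ inside $\ca M(p)$, which is what extension by the identity would require. Equivalently, the simplex-wise retraction of $\sigma$ pushes points into its faces, so every coface of $\sigma$ must be retracted \emph{before} $\sigma$; your exhaustion does not respect this, since a simplex of $\ca N_{i+1}\setminus\ca N_i$ can have all its proper faces already in $\ca N_i$. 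The fix is to order the simplex-wise retractions by a ``coface depth'' such as $e(\sigma):=\max\{\dim\sigma':\sigma'\supseteq\sigma\}-\dim\sigma$ (finite by local finiteness, and strictly decreasing under passage to cofaces), processing all $\sigma$ with $e(\sigma)=m$ at stage $m$; this guarantees cofaces are handled first and the identity extension is continuous at each stage, after which your compact-exhaustion/time-reparametrization bookkeeping does go through. This is, in effect, what the paper's star-based construction is implicitly doing.
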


\begin{proof}
Recall that in the compact case in Section \ref{sec:Lip-homotopy}, the strong deformation retraction 
$\Phi:\ca M(p)\times [0,1] \to \ca M(p)$ of $\ca M(p)$
to $\ca D(\ca U)\times 0$ is constructed simplex-wisely 
from higher dimensions to lower dimensions through Lemma \ref{lem:induct}.
Therefore the Lipschitz constant of $\Phi$ depends on
 {the dimension $k_0$ of $\ca N_{\ca U}$.}%

Since $\dim \ca N_{\ca U}$ could be infinite in the present case, 
first of all, we have to {verify} that the map 
$\Phi:\ca M(p)\times [0,1] \to \ca M(p)$ is well-defined.
For any point $(\theta, [x,t])\in \ca M(p)$, let $\sigma_\theta$
be the simplex whose interior contains $\theta$.
Let $L$ be the star of $\sigma_\theta$, which is a finite subcomplex
because of the local finiteness of $\ca N_{\ca U}$.
Then 
\[
\ca M_L(p) := \bigcup_{\sigma\in L} \, \sigma\times K(U_{\sigma})
\]
provides an open subset of $\ca M(p)$ containing $(\theta, [x,t])$.
Set 
\[
\ca D_L:= \bigcup_{\sigma\in L} \, \sigma\times U_{\sigma}.
\]
From the argument in Lemma \ref{lem:induct}, we have 
a Lipschitz strong deformation retraction 
$\Phi_L:\ca M_L(p) \times [0,1]\to \ca M_L(p)$
of $\ca M_L(p)$ to $ \ca D_L\times 0$.
Since $\ca M(p)$ is a locally finite union of such open subsets
$\ca M_L(p)$, we can construct a strong deformation retraction 
$\Phi:\ca M(p) \times [0,1]\to \ca M(p)$
of $\ca M(p)$ to $ \ca D\times 0$
simplex-wisely in a similar manner.
Since $\Phi|_{\ca M_L(p)\times [0,1]}=\Phi_L$ and the construction is simplex-wise,
$\Phi$ is locally Lipschitz. 
This completes the proof. 
\end{proof}
}

\section{Proof{s} of Theorem \ref{thm:lip-conv} and Corollary \ref{cor:Lip-conti}} \label{sec:appl}

{
To prove Theorem \ref{thm:lip-conv}, we need the following result, 
which follows from the proof of {\cite[Theorem 1.2]{MY:good}}. 
 
\begin{thm} [\cite{MY:good}] \label{thm:approximation}
{For every $M\in\ca A(n,D,v_0)$, let $M_i$ be a sequence in $\ca A(n,D,v_0)$ 
converging  to $M$ as $i \to \infty$.  
Then for any $\mu>0$}, there exists a good $\mu$-covering $\ca U = \{U_j\}_{j \in J}$ of $M$ satisfying the following. 
For every $\epsilon_i$-approximations $\phi_i : M \to M_i$ with {$\epsilon_i \to 0$},  
there exist a good $2\mu$-covering $\ca U_i = \{U_{ij}\}_{j \in J}$ of $M_i$ and $\nu_i$-approximations $\varphi_i : M \to M_i$ with $\nu_i\to 0$ 
such that for sufficiently large $i$
\begin{enumerate}
\item $\varphi_i(p_j)$ is a center of $U_{ij}$ in the sense of Definition {\ref{def:good cover}} for every $j \in J$; 
\item {the} corresponding $U_{j} \mapsto U_{ij}$ induces an isomorphism $\ca N_{\ca U} \to \ca N_{\ca U_i}$ between the nerves of $\ca U$ and $\ca U_i$; 
\item $\lim_{i \to \infty} \sup_{x \in M} |\phi_i(x), \varphi_i(x)| = 0$.
\end{enumerate}
\end{thm}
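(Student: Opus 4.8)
The plan is to carefully track the construction of the good covering in \cite{MY:good} and verify that it behaves well under Gromov--Hausdorff convergence. First I would recall how the covering $\ca U$ of $M$ is built: one fixes a maximal $\mu/2$-separated net $\{p_j\}_{j\in J}$ in $M$ (finite since $M$ is compact), and around each $p_j$ one constructs an open set $U_j$ containing $B(p_j,\mu/4)$, contained in $B(p_j,\mu)$, and enjoying the geometric properties of Theorem \ref{thm:good}; in particular $U_j$ has a Lipschitz strong deformation retraction to $p_j$. The key point established in \cite{MY:good} is that each $U_j$ can be taken to be of the form $\{x : g_j(x) < c_j\}$ or an intersection of such sublevel sets, where $g_j$ is built out of distance functions to $p_j$ and to a controlled finite collection of auxiliary points, together with a gradient-like (Perelman-type) flow that produces the retraction. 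All of this data — the net, the auxiliary points, the threshold values $c_j$ — is rigid enough to be transplanted along an approximation.

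Next I would transplant this data to $M_i$. Given $\e_i$-approximations $\phi_i : M \to M_i$, set $p_{ij} := \phi_i(p_j)$; for $i$ large this is a $(\mu/2 - \tau(\e_i))$-separated net in $M_i$, and we define $U_{ij}$ on $M_i$ by the \emph{same} formulas with $p_j$ replaced by $p_{ij}$ and the auxiliary points replaced by their $\phi_i$-images. The stability of the gradient-like flows and of the strainer/distance-function constructions under Gromov--Hausdorff convergence (as developed in \cite{BGP}, \cite{Per:Alex2} and used in \cite{MY:good}) gives, for $i$ large, that $\{U_{ij}\}_{j\in J}$ is a good $2\mu$-covering of $M_i$ whose element $U_{ij}$ retracts to a point; moreover one can arrange the center to be exactly $\varphi_i(p_j)$ for a suitable corrected approximation $\varphi_i$, by composing $\phi_i$ with a small adjustment supported near the net (using that the center is the limit of the flow and depends continuously on the space). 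This is how (1) and (3) are obtained simultaneously: $\varphi_i$ differs from $\phi_i$ by $\tau(\e_i)$ in the sup norm, and $\nu_i := \e_i + \tau(\e_i) \to 0$.

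For (2), the isomorphism of nerves, the argument is: a nonempty intersection $U_A = \bigcap_{j\in A}U_j$ forces the points $\{p_j\}_{j\in A}$ to be pairwise within $2\mu$ of each other, and conversely the construction guarantees that whenever the relevant distance configuration among the $p_j$ ($j\in A$) admits a common point, the intersection is nonempty; since $\phi_i$ distorts these finitely many distances by at most $\tau(\e_i) \ll \mu$, the combinatorial pattern of which intersections are nonempty is preserved for $i$ large, so $U_j \mapsto U_{ij}$ is a simplicial isomorphism $\ca N_{\ca U} \to \ca N_{\ca U_i}$. I expect the main obstacle to be the last point delivered cleanly, namely ensuring that no intersection that is empty in $M$ accidentally becomes nonempty in $M_i$ (or vice versa); this requires the thresholds $c_j$ and the geometric sets $U_j$ to be chosen with a definite ``margin'' — open on one side, with closures still intersecting in the expected pattern — so that a $\tau(\e_i)$-perturbation cannot cross the margin. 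This margin control, together with the uniform (in $i$) Lipschitz bounds on the transplanted retractions, is exactly what the proof of \cite[Theorem 1.2]{MY:good} supplies, so the present statement follows by reading that proof with the approximating sequence carried along throughout. $\qed$
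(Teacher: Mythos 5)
The paper does not actually prove Theorem \ref{thm:approximation}; it states the result with the remark that it ``follows from the proof of \cite[Theorem 1.2]{MY:good}'' and then uses it as a black box in the proof of Theorem \ref{thm:lip-conv}. There is therefore no in-paper argument to compare your proposal against. With that caveat, your reconstruction is a reasonable guess at what the omitted argument in \cite{MY:good} must do: fix a net, build each $U_j$ from distance-function sublevel sets admitting Perelman-type gradient-flow retractions, transplant the defining data to $M_i$ through the approximation, and use a definite margin in the thresholds to freeze the combinatorics of nonempty intersections for large $i$. Your explanation of where the correction $\varphi_i$ of $\phi_i$ comes from (to make $\varphi_i(p_j)$ literally a center, with $\sup_M|\phi_i,\varphi_i|\to 0$) and of why the nerve map is a simplicial isomorphism are in the right spirit.

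Two caveats you should be aware of. First, you assert the transplanted sets $U_{ij}$ form a \emph{good} covering ``by stability of the gradient-like flows''; this is exactly the nontrivial content of \cite{MY:good}, and your sketch does not attempt to justify it --- in particular that every nonempty finite intersection of the $U_{ij}$ (not just each $U_{ij}$ alone) retracts Lipschitz-strongly to a point, which is what Definition \ref{def:good cover} requires. Second, the reason the covering of $M_i$ is only asserted to be a good $2\mu$-covering rather than a $\mu$-covering is precisely the $\tau(\epsilon_i)$ slack in the transplantation; you use this implicitly but never state it, and it is worth making explicit that this is why the factor $2$ appears in the statement. Since the paper delegates the full argument to \cite{MY:good}, these gaps are unavoidable in a blind attempt, but they are genuine gaps relative to a self-contained proof.
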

\psmall\n
\begin{defn} \upshape
{We call such a $\ca U_i$ {given in Theorem \ref{thm:approximation} } a {\it lift} of $\ca U$ with respect to $\varphi_i$}. 
\end{defn}

} 


\begin{proof}[Proof of Theorem \ref{thm:lip-conv}]
Due to \cite[Theorem 1.2]{MY:good}, there exist $\epsilon>0$ and finitely many spaces $M_1, \dots, M_N \in {\ca A(n,D,v_0)}$ and finite simplicial complexes $K_1, \dots, K_N$ such that 
\psmall
\begin{itemize}
\item $\bigcup_{i=1}^N U_{\epsilon}^{\mathrm{GH}}(M_i) = \ca A(n,D,v_0)$; 
\item any $M \in U_\epsilon^{\mathrm{GH}}(M_i)$ admit a good covering whose nerve complex is isomorphic to $K_i$.
\end{itemize}
\psmall\n
Here, $U_\epsilon^{\mathrm{GH}}(X)$ denotes the $\epsilon$-neighborhood of $X$ in $\ca A(n,D,v_0)$ 
with respect to the Gromov-Hausdorff distance. 
From this and Theorem \ref{thm:lip-nerve}, we obtain the first conclusion of Theorem \ref{thm:lip-conv}. 

We prove the second conclusion by contradiction.
Suppose it does not hold.
Then we would have sequences $\{M_i\}$, $\{M_i'\}$ in $\ca A(n,D,v_0)$ with 
$d_{GH}(M_i,M_i')<{\delta_i}$, $\lim \delta_i=0$, together with 
$\delta_i$-approximation $\theta_i:M_i\to M_i'$ such that 
\begin{align}
\sup_{x\in M_i} {|\theta_i(x), {k_i}(x)|} >c>0, \label{eq:contra}
\end{align}
for any Lipschitz 
homotopy equivalence ${k_i}:M_i\to M_i'$, where $c$ is a constant not depending 
on $i$.
Passing to a subsequence, we may assume that 
both $M_i$ and $M_i'$ converge to an Alexandrov space $M \in \ca A(n,D,v_0)$.
We introduce a new positive number $\mu \ll c$.
By {Theorem \ref{thm:approximation}, one can take 
a good $\mu$-cover $\ca U$} ${= \{U_j\}_{j \in J}}$ of $M$ {
and a good $2\mu$-cover $\mathcal U_i$ {$=\{U_{ij}\}_{j \in J}$} of $M_i$ 
such that $\ca U_i$ is a lift of $\ca U$ with respect to some $\nu_i$-approximation
$\varphi_i:M\to M_i$, where $\lim_{i\to\infty}\nu_i=0$.
%
%
Let $\psi_i :M_i \to M$ be an $\nu_i$-approximation, which is {an} almost inverse of $\varphi_i$, in the sense that 
\[
   \sup_{x \in M_i} |\varphi_i \circ \psi_i(x), x| \le \nu_i, \quad
   \sup_{x \in M} |\psi_i \circ \varphi_i(x), x| \le \nu_i.
\]
{Note that $\theta_i \circ \varphi_i:M\to M_i'$ is {a} $2( \delta_i + \nu_i)$-approximation.
Applying Theorem \ref{thm:approximation} to $\theta_i \circ \varphi_i$, 
we also obtain a $\nu_i'$-approximation $\varphi_i' : M \to M_i'$ with 
$\lim_{i\to\infty}\nu_i'=0$ and a lift $\ca U_i'$ ${= \{U_{ij}' \}_{j \in J}}$ of $\ca U$ with respect to $\varphi_i'$
{such that
\beq  \label{eq:theta-varphi}
\lim_{i\to\infty} \sup_{x\in M} |\theta_i \circ\varphi_i(x), \varphi_i'(x)| = 0     
\eeq
Set  $p_{ij}:= \varphi_i(p_j)$ and $p_{ij}':= \varphi_i'(p_j)$, which are  
centers of  $U_{ij}$ and $U_{ij}'$ respectively}.
It follows that 
\beq  \label{eq:1.2-01}
\sup_{x \in M_i}|\theta_i(x), \varphi_i' \circ \psi_i(x)| \le \mu
\eeq
\noindent
for large $i$.
{Let  $\alpha_i : \ca N_{\ca U} \to \ca N_{\ca U_i}$ and 
$\alpha_i' : \ca N_{\ca U} \to \ca N_{\ca U_i'}$ be the
isomorphisms given by the correspondence $U_j \mapsto U_{ij}$
and $U_j \mapsto U_{ij}'$ respectively}.
We now consider the following diagram: 
\[
\xymatrix{
M_i \ar[d]_{\Theta_i} \ar@<0.2em>[r]^{\psi_i} \ar@<-0.2em>@{.>}[r]_{h_i} 
& M \ar@<0.2em>[d]^\Theta \ar@<0.2em>[r]^{\varphi_i'} \ar@<-0.2em>@{.>}[r]_{g_i} & M_i' \\ 
|\ca N_{\ca U_i}| 
\ar[r]_{\alpha_i^{-1}} 
& |\ca N_{\ca U}| 
\ar@<0.2em>[u]^{\zeta}
\ar[r]_{\alpha_i'} 
& 
|\ca N_{\ca U_i'}| \ar[u]_{\zeta_i'} 
}
\]
Here, $\Theta, \zeta, \Theta_i, \zeta_i'$ are  maps given by Corollaries \ref{cor:homo-inv1} and \ref{cor:homo-inv2}, for $(M, \ca U)$, $(M_i, \ca U_i)$ and $(M_i', \ca U_i')$
respectively. 
For instance, $\Theta (x) = (\xi_j(x))_{j \in J}$ for $x \in M$, where $(\xi_j)_{j \in J}$ is a partition of unity by Lipschitz functions subordinate to $\left\{\bar U_j \right\}_{j \in J}$, and $\zeta$ is a Lipschitz homotopy inverse of $\Theta$ given by Corollary \ref{cor:homo-inv2}.
Now, we consider the compositions
\[ {
h_i:= \zeta\circ \alpha_i^{-1}\circ \Theta_i, \quad
g_i:= \zeta_i'\circ \alpha'_i\circ \Theta,}
\]
which are Lipschitz homotopy equivalences
satisfying
\begin{equation} \label{eq:10mu}
\sup_{x \in M_i} |\psi_i(x), h_i(x)| \le 10 \mu, \hspace{1em} \hspace{1em} \sup_{x \in M} |\varphi_i'(x), g_i(x)| \le 10 \mu. 
\end{equation}
Indeed, for $x \in M_i$, $\Theta_i(x)$ is contained in a unique open simplex $\left< U_{i j_0}, \cdots, U_{i j_k} \right> \in \ca N_{\ca U_i}$. 
Then, $\alpha_i^{-1} \circ \Theta_i(x)$ is contained in $\left< U_{j_0}, \dots, U_{j_k} \right>$.
By the property of $\zeta$ stated in Corollary \ref{cor:homo-inv2}, we have 
$h_i(x) \in {U_{j_{\ell}}}$ for some $0\le \ell \le k$.
On the other hands, since $x \in U_{i j_{\ell}}$, 
we have $|x, p_{i {j_\ell}}| \le 2\mu$ and $|\psi_i(x), p_{j_\ell}| \le 3 \mu$.
Therefore, we obtain 
\begin{align*}
|h_i(x), \psi_i(x)| &\le |h_i(x), p_{j_\ell}| + |p_{j_\ell}, \psi_i(x)| \le 4 \mu.
\end{align*}
Thus we obtain \eqref{eq:10mu} for $h_i$. 
Similarly we obtain \eqref{eq:10mu} for $g_i$.
%
It follows {from \eqref{eq:10mu} and \eqref{eq:1.2-01}} that 
$ \sup_{x\in M_i} |\theta_i(x), g_i \circ h_i(x)| \le 100 \mu$,
}}
which is a contradiction to \eqref{eq:contra}
\end{proof}
\psmall
For two metric spaces $A$ and $B$, let us denote by
\[
[A,B]_{{\mathrm{loc}\text{-}\mathrm{Lip}}} 
\]
the set of all {locally} Lipschitz homotopy classes of locally Lipschitz maps from $A$ to $B$.
Let us denote by 
\[
[A,B]
\]
the set of all homotopy classes of continuous maps from $A$ to $B$.
For another metric space $C$ and a locally Lipschitz map $f : A \to B$, we define a map $f^\ast : [B,C]_{{\mathrm{loc}\text{-}\mathrm{Lip}}} \to [A,C]_{{\mathrm{loc}\text{-}\mathrm{Lip}}}$ (and $f^\ast : [B,C] \to [A,C]$) by $f^\ast (g) := g \circ f$ up to locally Lipschitz homotopy (and up to homotopy, respectively). 
From the definition, for a locally Lipschitz map $g : B \to C$, we have 
\begin{equation} \label{eq:contravariant}
(g \circ f)^\ast = f^\ast \circ g^\ast.
\end{equation}

\begin{proof}[Proof of Corollary \ref{cor:Lip-conti}]
Let us fix a good cover $\mathcal U$ of { a $\sigma$-compact metric space $X$, and let $K$ be 
the geometric realization of the nerve} 
of $\mathcal U$. 
From \cite[Corollary 1.3]{MY:LLC}, the induced map
\[
[{K}, Y]_{{\mathrm{loc}\text{-}\mathrm{Lip}}} \to [{K}, Y]
\]
is bijective. 
By Theorem \ref{thm:lip-nerve}, ${K}$ is locally Lipschitz homotopy equivalent to $X$.
Let $f : X \to {K}$ and $g : {K} \to X$ {be} locally Lipschitz homotopy equivalences such that 
{$g \circ f$ and $f \circ g$ are 
locally Lipschitz homotopy eqiuvalent to $\mathrm{id}_X$ and $\mathrm{id}_K$, respectively.}
By the contravariant property \eqref{eq:contravariant}, the induced maps $g^\ast : [X,Y]_{{\mathrm{loc}\text{-}\mathrm{Lip}}} \to [{K}, Y]_{{\mathrm{loc}\text{-}\mathrm{Lip}}}$ and $f^\ast : [{K}, Y]_{{\mathrm{loc}\text{-}\mathrm{Lip}}} \to [X, Y]_{{\mathrm{loc}\text{-}\mathrm{Lip}}}$ are mutually inverse.
So are $f^\ast : [{K}, Y] \to [X, Y]$ and $g^\ast : [X, Y] \to [{K}, Y]$.
These imply the conclusion.
\end{proof}

{Remark that in the statement of Corollary \ref{cor:Lip-conti}, if $X$ is compact, we obtain a natural bijection between the set of all Lipschitz homotopy classes of Lipschitz maps from $X$ to $Y$ and $[X,Y]$.}

A refinement of Corollary \ref{cor:Lip-conti} is the following: 
\begin{cor} \label{cor:001}
Let $X$ and $Y$ be as in Corollary \ref{cor:Lip-conti}. 
For any continuous function $\epsilon : Y \to (0,\infty)$ and any continuous map $f : X \to Y$, there is a locally Lipschitz map $g : X \to Y$ which is homotopic to $f$ and satisfies 
\[
|f(x), g(x)| < \epsilon(f(x))
\]
for every $x \in X$.
\end{cor}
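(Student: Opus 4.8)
The strategy is to upgrade the bijection in Corollary \ref{cor:Lip-conti} to a quantitative, $\epsilon$-controlled statement by carrying the constant $\epsilon$ through the nerve construction. First I would reduce to the case where $Y$ is replaced by its metric, and observe that, since $\epsilon : Y \to (0,\infty)$ is continuous and positive and $Y$ is locally Lipschitz contractible hence locally path-connected, the pullback $\epsilon \circ f : X \to (0,\infty)$ is a continuous positive function on $X$. The idea is to choose a good cover $\ca U = \{U_j\}_{j\in J}$ of $X$ whose elements are \emph{so fine} that $f$ maps each $U_j$ into a ball of $Y$ of radius controlled by $\epsilon$ on that ball; concretely, for each $j$ pick a point $x_j \in U_j$ and demand $\diam f(U_j)$ small compared with $\inf_{x\in U_j}\epsilon(f(x))$, which is possible because $\epsilon\circ f$ is bounded below on the compact closure $\bar U_j$ (here condition (1) of Definition \ref{def:good cover} is used) and $f$ is (uniformly) continuous on $\bar U_j$. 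Such a refinement of any given good cover is still good, by the refinement property in Theorem \ref{thm:good} when $X$ is Alexandrov, or more generally because a locally finite open refinement of a good cover all of whose finite intersections Lipschitz-deformation-retract to a point is again good.

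Next I would run the nerve machinery: by Theorem \ref{thm:lip-nerve} there are locally Lipschitz homotopy equivalences $\Theta : X \to |\ca N_{\ca U}|$ and $\zeta : |\ca N_{\ca U}| \to X$, with $\Theta(x) = (\xi_j(x))_{j\in J}$ for a Lipschitz partition of unity subordinate to $\{\bar U_j\}$, and with $\zeta$ satisfying the support property \eqref{eq:zeta} of Corollary \ref{cor:homo-inv2}. Then $g_0 := f \circ \zeta \circ \Theta : X \to Y$ is homotopic to $f$ (since $\zeta\circ\Theta \simeq \mathrm{id}_X$), but it need not be locally Lipschitz, so instead I would work as in the proof of Corollary \ref{cor:Lip-conti}: use \cite[Corollary 1.3]{MY:LLC} to replace the map $f\circ\zeta : |\ca N_{\ca U}| \to Y$, which is continuous, by a locally Lipschitz map $F : |\ca N_{\ca U}| \to Y$ homotopic to it, and set $g := F \circ \Theta$. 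This $g$ is locally Lipschitz and homotopic to $f$. To get the estimate, note that for $x \in X$ the point $\Theta(x)$ lies in the open simplex spanned by exactly those $U_j$ containing $x$; by \eqref{eq:zeta}, $\zeta(\Theta(x)) \in U_{j}$ for some $j$ with $x \in U_j$, so $x$ and $\zeta(\Theta(x))$ lie in a common $U_j$, whence $|f(x), f(\zeta(\Theta(x)))| \le \diam f(U_j)$, which by our choice of the cover is $< \tfrac12 \epsilon(f(x))$. The remaining discrepancy $|f(\zeta(\Theta(x))), g(x)|$ is controlled by how closely the locally Lipschitz $F$ approximates $f\circ\zeta$; here one must invoke the \emph{refined} version of \cite[Corollary 1.3]{MY:LLC} — the one producing, for any continuous positive error function, a locally Lipschitz map within that error and homotopic to the original — applied with the error function $\tfrac12\,\epsilon\circ f\circ\zeta$ on $|\ca N_{\ca U}|$. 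Combining the two halves via the triangle inequality gives $|f(x), g(x)| < \epsilon(f(x))$.

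\textbf{The main obstacle.} The delicate point is the simultaneous control of two independent sources of error by the single, \emph{non-uniform} bound $\epsilon(f(x))$: the combinatorial error coming from the nerve (the size of $f(U_j)$) and the analytic error coming from smoothing $f\circ\zeta$ into a locally Lipschitz map. The first is handled at the level of choosing the cover, but one must be careful that "small compared with $\epsilon$ on $U_j$" is a self-consistent requirement — since $\epsilon\circ f$ may tend to $0$ near the ends of $X$, the cover must be correspondingly finer there, which is exactly where $\sigma$-compactness and local finiteness let us make infinitely many, progressively more stringent choices. The second requires that the cited Lipschitz-approximation result from \cite{MY:LLC} comes in an $\epsilon$-controlled form on simplicial complexes (it does — this is precisely the content used to prove Corollary \ref{cor:001} from Corollary \ref{cor:Lip-conti} in the companion work); if only the unparametrized bijection were available, one could still recover the estimate by a relative/Lebesgue-number argument subdividing $|\ca N_{\ca U}|$ finely and applying the homotopy extension property simplex by simplex, but invoking the quantitative statement directly is cleaner. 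Finally, a small bookkeeping point: one should check that $\zeta$ and $\Theta$ may be taken with $\zeta\circ\Theta$ homotopic to $\mathrm{id}_X$ through a homotopy whose tracks stay inside the $U_j$'s, so that no extra error is introduced when passing from "$g$ homotopic to $f$" to the pointwise bound; this follows from the explicit simplex-wise construction of the deformation retractions in Section \ref{sec:Lip-homotopy}.
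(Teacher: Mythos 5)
Your plan correctly identifies the two ingredients the paper itself invokes: the nerve machinery from the proof of Corollary \ref{cor:Lip-conti} (maps $\Theta$, $\zeta$ and the support property \eqref{eq:zeta}), and a quantitative, $\epsilon$-controlled version of the locally Lipschitz approximation result from \cite{MY:LLC} --- which is exactly \cite[Corollary 4.4]{MY:LLC}, the reference the paper cites. Your triangle-inequality decomposition of the error into a combinatorial part (controlled by $\diam f(U_j)$) and an analytic part (controlled by the quantitative approximation on $|\ca N_{\ca U}|$) is the right picture, and the bookkeeping issue you flag at the end --- that the analytic error is a priori measured against $\epsilon(f(\zeta(\Theta(x))))$ rather than $\epsilon(f(x))$ --- is real but fixable by building a uniform fudge factor into the fineness requirement.

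The genuine gap is the refinement step. You need a good cover $\ca U$ so fine that $\diam f(U_j)$ is small relative to $\inf_{U_j}\epsilon\circ f$. For Alexandrov $X$ this is supplied by Theorem \ref{thm:good}, which explicitly produces a good cover refining any prescribed open cover. But the hypothesis of Corollary \ref{cor:Lip-conti} --- and hence of Corollary \ref{cor:001} --- is only that $X$ is a $\sigma$-compact metric space \emph{admitting some} good covering, not that every open cover has a good refinement. Your fallback sentence (``more generally because a locally finite open refinement of a good cover all of whose finite intersections Lipschitz-deformation-retract to a point is again good'') merely restates the definition of ``good''; it does not explain why such a refinement \emph{exists}. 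Passing from a good cover to a finer open cover generically destroys the Lipschitz-contractibility of the finite intersections, so one cannot simply take any metric refinement. Unless you supply a construction (or an additional hypothesis on $X$) producing arbitrarily fine good covers, the estimate $|f(x), f(\zeta(\Theta(x)))| < \tfrac12\epsilon(f(x))$ cannot be forced, and the argument breaks at that point for general $X$ in the stated class.
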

\begin{proof}
{This follows from \cite[Corollary 4.4]{MY:LLC} and a discussion similar to the proof of Corollary \ref{cor:001}.}
\end{proof}


\section{Gluing with an almost isometry} \label{sec:gluing}

For a small $1/n\gg \delta>0$, let $\mathcal R_M(\delta)$ the open set of $M$ consisting of all 
$(n,\delta)$-{strained} points, which is called the $\delta$-regular part of $M$.
In this section we prove Theorem \ref{thm:gluing} by making use of the notion of 
center of mass developed in \cite{BGP} (see \cite{GK} for the original idea). 

\begin{proof} [Proof of Theorem \ref{thm:gluing}]
Let $\theta:M\to M'$ be an $\e$-approximation.
Take $\mu>0$ such that the closed $3\mu$-neighborhood of $D$ is contained in 
$\mathcal R_M(\delta)$. 
Let $D_1$ be the closed $2\mu$-neighborhood of $D$.
We also denote by $D_0$ the closed $\mu$-neighborhood of $D$. 
By \cite{BGP} and \cite{Ya:conv}, 
for small enough $\e>0$ with $\e\ll \mu$, we have a $\tau(\delta)$-almost isometric map 
\[
 g:D_1\to g(D_1)\subset \mathcal R_{M'}(2\delta) 
\]
such that $d(g(x),\theta(x))<\tau(\e)$ for all $x\in D_1$.
On the other hand from Theorem \ref{thm:lip-conv}, we have a 
Lipschitz homotopy equivalence 
\[
 f:M\to M'
\]
such that $d(f(x),\theta(x))<\tau(\e)$ for all $x\in M$.

We shall construct a Lipschitz homotopy equivalence $h:M\to M'$
such that $h=g$ on $D$ and $h=f$ on $M\setminus D_0$.
Denote by $E$ the closure of $D_1\setminus D$. Take $R>0$
such that each point $x\in E$ has an $(n,\delta)$-strainer of length $> R$.
Let $\{x_i\}_{i=1}^N\ \subset E$ be a maximal family with $|x_i,x_j|\ge \delta R/2$ for each $i\neq j$.
Then $\{B_i\}_{i=1}^N$ with $B_i:= B(x_i,\delta R/2)$ gives a covering of $E$.
By Theorem \ref{thm:alm-isom}, for each $1\le i\le N$ there are $\tau(\delta)$-almost isometric maps 
\[
 f_i:B(x_i, 2\delta R) \to \mathbb R^n, \quad f_i':B(g(x_i), 2\delta R) \to \mathbb R^n.
\]
Let 
\[
 d(x) =\min \{|D,x|, \mu \}.
\]
Note that the multiplicity of the covering $B(x_i, 2\delta R)$ is uniformly bounded 
by {a constant} $C_n$.

For every $x\in B_i$, let 
\[
 h_i^0(x) := (f_i')^{-1}\left( \frac{d(x)}{\mu} f_i'(f(x)) +\left(1-\frac{d(x)}{\mu}\right)f_i'(g(x))\right).
\]
This extends to a Lipschitz map $h_i: \overline{M\setminus E}\cup B_i \to M'$ satisfying
\begin{align*}
 h_i(x) = \begin{cases}
 g(x), \,\,\, &x\in D \\
 f(x), \,\,\, &x\in \overline{M\setminus D_0},
 \end{cases}
\end{align*}
and $|\theta(x), h_i(x)|<\tau(\e)$ for all $x\in \overline{M\setminus E}\cup B_i$.

Now we are going to glue these Lipschitz maps $\{h_i\}$ to get a Lipschitz map $h:M\to M'$.
Define a Lipschitz cut-off function $\varphi_i:M\to \mathbb R$ by
\psmall
\begin{align*}
 \varphi_i(x) := \begin{cases}
 1- \frac{|x,x_i|}{\delta R}, & \,\,\, x\in B(x_i, \delta R) \\
 0, &\,\,\, {\rm otherwise}.
 \end{cases}
\end{align*}
\psmall\n
Let $F_i := \overline{M\setminus E}\cup B_1\cup\cdots \cup B_i $,
and set 
\[
 \psi_i(x) :=\sum_{j=1}^{i}\varphi_j(x),\,\, x\in M.
\]
Assuming that 
$h_{1\cdots i}: F_i\to M'$ is already defined in such a way that
\psmall
\begin{equation}
 \begin{cases}
 & |\theta(x), h_{1\cdots i}(x)|<\tau(\e), \, x\in F_i\\
 &h_{1\cdots i}(x) = \begin{cases}
          g(x), \,\,\, &x\in D \\
          f(x), \,\,\, &x\in \overline{M\setminus D_0}, \label{eq:h1i}
          \end{cases}
\end{cases}
\end{equation}
\psmall\n
define $h_{1\cdots i+1}: F_{i+1}\to M'$ by
\psmall
\begin{align*}
 h_{1\cdots i+1}(x) := \begin{cases}
 h_{1\cdots i}(x), \hspace{3.5cm} x\in F_i\setminus B_{i+1}, \\
 (f_{i+1}')^{-1} \left( \left(1-\frac{\varphi_{i+1}(x)}{\psi_{i+1}(x)}\right) f_{i+1}'(h_{1\cdots i}(x)\right) \\
 \hspace{1cm} + \frac{\varphi_{i+1}(x)}{\psi_{i+1}(x)} f_{i+1}'(h_{i+1}(x))), 
 \hspace{0.8cm}\,\,\,
 x\in B_{i+1}. 
 \end{cases}
\end{align*}
\psmall\n
Note that $ h_{1\cdots i+1}$ also satisfies \eqref{eq:h1i}.

Finally we set $h:=h_{1\cdots N}:M\to M'$. Note that $|h(x), \theta(x)|<\tau(\e)$ for all $x\in M$, and 
\psmall
\begin{align*}
 h(x) = \begin{cases}
 g(x), \,\,\, &x\in D \\
 f(x), \,\,\, &x\in \overline{M\setminus D_0}.
 \end{cases}
\end{align*}
\psmall
Similarly we define $h':M'\to M$ by using the Lipschitz homotopy inverse $f'$ of $f$,
 $g':=g^{-1}$, $D':=g(D)$, $D_0':=g(D_0)$, $D_1':=g(D_1)$d and $d'=d\circ g^{-1}$
 in place of $f$, $g$, $D$, $D_1$ and $d$.
Note that every $y\in D_1'$ has $(n,2\delta)$-strainer of length $>R/2$.
Obviously, $|h'\circ h(x), x|<\tau(\e)$ and 
\psmall
\begin{align*}
 h'\circ h(x) = \begin{cases}
 x, \,\,\, &x\in D \\
 f'\circ f (x), \,\,\, &x\in \overline{M\setminus D_0}.
 \end{cases}
\end{align*}
\psmall
{To construct a Lipschitz homotopy between $1_M$ and $h'\circ h$,
 we use a method developed in \cite{GP}.
We }consider the product space $M\times M$ and denote by $\Delta\subset M\times M$ the diagonal.
Introduce a positive constant $\sigma$ with $\e\ll \sigma\ll \mu$ and 
take a sequence $0<\sigma_i <\sigma$ with $\lim\sigma_i = 0$. 
For every ${\bf x}:=(x_1,x_2)\in D_1\times D_1\cap A(\Delta;\sigma_i,\sigma)$,
let $y$ denote the midpoint of a minimal geodesic joining $x_1$ and $x_2$,
where $A(\Delta;\sigma_i,\sigma):=\overline{B(\Delta,\sigma)\setminus B(\Delta,\sigma_i)}$ is the annulus.
Note that ${\bf y}:=(y,y)$ is the foot of a minimal geodesic from $\bf x$ to $\Delta$.
It is possible to take points $z_1$ and $z_2$ of $M$ such that 
\begin{align*}
 & \tilde \angle yx_i z_i > \pi -\tau(\delta), i=1,2, \\
 & |y,z_1| =|y, z_2|, \,\, |\Delta, \bf z|=\sigma,
\end{align*}
where ${\bf z}:=(z_1,z_2)$.
Then a direct computation shows that 
\[
 |z_i, y| > |z_i, x_i| + (1-\tau(\delta)) |x_i, y|, \,\, i=1,2,
\]
and
\[
 | {\bf z}, {\bf y}| > |{\bf z}, {\bf x}| + (1-\tau(\delta)) |{\bf x}, {\bf y}|,
\]
which yields that 
\[
 \tilde \angle {\bf z x y} > \pi -\tau(\delta).
\]
The above argument shows that the distance function $d_{\Delta}$ from $\Delta$ is $(1-\tau(\delta))$-regular
on $D_1\times D_1\cap A(\Delta;\sigma_i,\sigma)$.
Now we consider a smooth approximation of a neighborhood $U_i$ 
of $D_1\times D_1\cap A(\Delta;\sigma_i,\sigma)$.
By \cite{KMS} and \cite{MY:3alex}, there are a smooth manifold $N_i$ and a bi-Lipschitz 
homeomorphism $\Phi_i:U_i\to N_i$
together with a gradient like unit vector field $X_i$ for $d_{\Delta}\circ\Phi_i^{-1}$ defined on $N_i$ 
such that if $\phi_i(\Phi_i({\bf x}), t)$ denotes the integral curves of $-X_i$ starting at $\Phi_i({\bf x})$, then 
for each ${\bf x}\in D_1\times D_1\cap A(\Delta;\sigma_i,\sigma)$ with $|{\bf x},\Delta|=\sigma$, 
\[
 |\Phi_i^{-1}\circ\phi_i(\Phi_i({\bf x}),t_0),\Delta|=\sigma_i,
\]
for some $t_0<2(\sigma-\sigma_i)$.
By combining the flow curves $\{\Phi_i^{-1}\circ \phi_i(\Phi_i({\bf x}), t)\}_i$, we obtain a Lipschitz
flow $\phi$ on $D_1\times D_1\cap \overline{B}(\Delta, \sigma)$ such that 
for each ${\bf x}\in D_1\times D_1\cap \overline{B}(\Delta, \sigma)$ with $|{\bf x},\Delta|=\sigma$, $\phi({\bf x},s_0)\in \Delta$
for some $s_0<2\sigma$.
For ${\bf x}=(x, h'\circ h(x))$, if we denote $\phi({\bf x}, t) =(\phi^1({\bf x}, t), \phi^2({\bf x}, t))$, 
the union of $\phi^1({\bf x}, t)$ and $\phi^2({\bf x}, 1-t)$ provides the desired 
Lipschitz homotopy between $1_M$ and $h'\circ h$ on $D_1$.

We have just constructed a Lipschitz homotopy $H(x,t)$ between $1_M$ and $h'\circ h$ on $D_1$.
Recall that we have a Lipschitz homotopy $F(x,t)$ between $1_M$ and $f'\circ f$.
We have to glue $F$ and $H$ to get a Lipschitz homotopy $G(x,t)$ between $1_M$ and $h'\circ h$ defined on $M$. 
Let $\rho:M\times [0,1]\to [0,1]$ be a Lipschitz function such that
\begin{align*}
 \rho(x,t) = \begin{cases}
 0, \,\,\, & {\rm on} \,\, D\times [0,1]\cup D_0\times [1/2,1], \\
 1, \,\,\, & {\rm on} \,\, \overline{M\setminus D_1}\times [0,1].
 \end{cases}
\end{align*}
\psmall
For every $(x,t)\in B_i\times [0,1]$, let 
\[
 G_i^0(x,t) := f_i^{-1}\left( \rho(x,t) f_i( F(x,t)) + (1-\rho(x,t))f_i(H(x,t))\right).
\]
This extends to a Lipschitz map 
$G_i:\overline{M\setminus E}\cup B_i\times [0,1]\to M$ satisfying
\psmall
\begin{align*}
 G_i(x,t) = \begin{cases}
 x, \,\,\, & {\rm on}\,\,\,\overline{M\setminus E}\cup B_i\times 0,\\
 f'\circ f(x), \,\,\, & {\rm on}\,\,\,\overline{M\setminus E}\cup B_i\times 1, \\
 H(x,t), & {\rm on}\,\,\, D\times [0,1], \\
 F(x,t), \,\,\, & {\rm on}\,\,\, \overline{M\setminus D_1}\times [0,1]. \\
 \end{cases}
\end{align*}
\psmall\n
Assuming that 
$G_{1\cdots i}: F_i\times [0,1]\to M$ is already defined in such a way that
\begin{equation}
 G_{1\cdots i}(x,t) = \begin{cases}
 x, \,\,\, & {\rm on}\,\,\, F_i\times 0, \\
 f'\circ f(x), \,\,\, & {\rm on}\,\,\,F_{i}\times 1, \\
 H(x,t), \,\,\, & {\rm on}\,\,\, D\times [0,1], \\
 F(x,t), \,\,\, & {\rm on} \,\,\, \overline{M\setminus D_1}\times [0,1], 
 \end{cases}
\end{equation}
define $G_{1\cdots i+1}: F_{i+1}\times [0,1]\to M$ by
\begin{align*}
 G_{1\cdots i+1}(x,t) := \begin{cases}
 G_{1\cdots i}(x,t), \hspace{2.7cm} (x,t)\in (F_i\setminus B_{i+1})\times [0,1], \\
 (f_{i+1})^{-1} \left( \left(1-\frac{\varphi_{i+1}(x)}{\psi_{i+1}(x)}\right) f_{i+1}(G_{1\cdots i}(x,t)\right) \\
\hspace{0.5cm} 
+ \frac{\varphi_{i+1}(x)}{\psi_{i+1}(x)} f_{i+1}(G_{i+1}(x,t))), 
 \hspace{0.5cm} \,\,\,(x,t)\in B_{i+1}\times [0,1]. 
 \end{cases}
\end{align*}
\psmall
Finally set $G:=G_{1\cdots N}$. 
Obviously $G$ is Lipschitz, and $G=H$ on $D\times [0,1]$ and 
 $G=F$ on $\overline{M\setminus D_1}\times [0,1]$, and 
thus $G$ is a required Lipschitz homotopy 
between $1_M$ and $h'\circ h$. 
Similarly we obtain a Lipschitz homotopy between $1_{M'}$ 
and $h\circ h'$. 
This completes the proof of Theorem \ref{thm:gluing}.
\end{proof}

\section{Further problems}

It is quite natural to expect that there should exist uniform Lipschitz 
constants of the Lipschitz homotopies in Theorems \ref{thm:lip-conv}
and \ref{thm:gluing}.



\begin{thebibliography}{9999}

\bibitem{BGP} Yu.~Burago, M.~Gromov, and G.~Perel'man.
A. D. Aleksandrov spaces with curvatures bounded below, 
Uspekhi Mat. Nauk 47 (1992), no. 2(284), 3--51, 222, 
translation in Russian Math. Surveys 47 (1992), no. 2, 1--58


\bibitem{GK}K.~Grove and H.~Karcher, 
How to conjugate $C^1$-close group actions,
Mathematische Zeitschrift, 132 (1973), 11-20.

{
\bibitem{GP} K.~Grove and P.~Petersen. Bounding homotopy types by geometry, 
Ann. of Math. 128(1988), 195-206.
}


\bibitem{Kap}
V.~Kapovitch. 
Regularity of limits of noncollapsing sequences of manifolds. 
Geom. Funct. Anal. 12(1), 121--137 (2000).

\bibitem{Kap:stab}
V.~Kapovitch. 
 Perelman's stability theorem, Surveys in differential geometry. Vol. XI, Surv. Differ. Geom., vol. 11, Int. Press, Somerville, MA, 2007, pp. 103-136.
\bibitem{KMS} K.~Kuwae, Y.~Machigashira and T.~Shioya.
Sobolev spaces, Laplacian, and heat kernel on Alexandrov spaces, 
Math. Z. 238 (2001), no. 2, 269--316.

\bibitem{MY:3alex} A.~Mitsuishi and T.~Yamaguchi. Collapsing three-dimensional closed Alexandrov 
spaces,　 Trans. AMS. 367(2015), 2339-2410

\bibitem{MY:LLC}
A.~Mitsuishi and T.~Yamaguchi. 
Locally Lipschitz contractibility of Alexandrov spaces and its applications, 
Pacfic J. Math., Vol. 270. No. 2 (2014) 393--421. 

\bibitem{MY:good}
A.~Mitsuishi and T.~Yamaguchi. 
Good coverings of Alexandrov spaces, 
Preprint, arXiv:1508.07110.

\bibitem{Per:Alex2}
G.~Perelman. 
A.~D.~Alexandrov's spaces with curvatures bounded from below. II,
preprint. 

\bibitem{Per}
G.~Perelman. 
Elements of Morse theory on Alexandrov spaces, 
St. Petersburg Math. J. 5 (1994) 207--214.

\bibitem{Plaut}
C.~Plaut.
Spaces of Wald-Berestovskii curvature bounded below
The Journal of Geometric Analysis.
March 1996, Vol. 6, issue 1, pp.113--134.

\bibitem{Sha}
A. R. ~Shastri. 
 Basic Algebraic Topology, CRC Press, Taylor $\And$ Francis Group, A Chapman $\And$ Hall Book,l 2014.

\bibitem{Ya:conv} {T. Yamaguchi}. \textit{A convergence theorem in the geometry of Alexandrov spaces.} Actes
de la Table Ronde de G\'eom\'etrie Diff\'erentielle, Luminy 1992. S\'emin. Congr., vol. 1,
601-642, Soc. Math. France, Paris (1996)

\end{thebibliography}
\end{document}